\documentclass[a4paper,oneside,11pt, reqno]{amsart}
\usepackage{mathtools, amsthm}
\usepackage{xcolor}
\usepackage{amssymb, latexsym, amsmath, amsfonts,  enumitem, tikz,fancyhdr, bm}
\usepackage{mathtools}
\usepackage{todonotes}
\usepackage{tikz-cd}
\usepackage{relsize}
\usepackage{mathabx}

\usepackage{marginnote}

\usepackage{soul}
\setstcolor{red}

\numberwithin{equation}{section}

\usepackage{enumitem}

\newtheorem{theorem}{Theorem}[section]
\newtheorem{proposition}[theorem]{Proposition}
\newtheorem{fact}[theorem]{Fact}
\newtheorem{lemma}[theorem]{Lemma}
\newtheorem{definition}[theorem]{Definition}
\newtheorem{ex}[theorem]{Example}
\newtheorem{remark}[theorem]{Remark}
\newtheorem{corollary}[theorem]{Corollary}

\newtheorem{conjecture}[theorem]{\bf Conjecture}

 \setlength{\oddsidemargin}{0in} \setlength{\evensidemargin}{0.5in}
 \setlength{\textwidth}{6.375in} \setlength{\topmargin}{-0.2in}
 \setlength{\textheight}{23cm}

\theoremstyle{plain}

%
	{\end{enumerate}}

\title[Grothendieck and tensor product norms]{On variants of the Grothendieck inequality\\ and estimates on tensor product norms}

\author[Gupta]{Rajeev Gupta}
\address{School of Mathematics and Computer Science, Indian Institute of Technology Goa, Goa - 403401}
\email{rajeev@iitgoa.ac.in}

\author[Misra]{Gadadhar Misra}
\address{Indian Statistical Institute, Bangalore and Indian Institute of Technology, Gandhinagar}
\email{gm@isibang.ac.in}

\author[Ray]{Samya Kumar Ray}
\address{
School of Mathematics, Indian Institute of Science Education and Research Thiruvananthapuram, Kerala - 695551}
\email{samyaray7777@gmail.com, samya@iisertvm.ac.in}
\thanks{The first named author was supported through the INSPIRE faculty grant (Ref. No. DST/INSPIRE/04/2017/002367). The second named author gratefully acknowledges the financial support provided by  SERB in the form of a ~J~C~Bose National Fellowship. The third named author acknowledges the DST-INSPIRE Faculty Fellowship DST/INSPIRE/04/2020/001132 and Prime Minister Early Career Research Grant Scheme ANRF/ECRG/2024/000699/PMS.}
	\keywords{Grothendieck's Theorem, G.T. space, Tensor norms, dilation and von Neumann inequality, contractivity vs. complete contractivity}
	
	\pagestyle{headings}

    \begin{document}
\baselineskip 14.125pt	
	\begin{abstract}
        We investigate a Grothendieck-type inequality for pairs of Banach spaces $E,F$ assuming $E$ is finite-dimensional and study the associated Grothendieck-type constant. We prove that if there is a $C >0$ such that  $\|A\otimes \operatorname{id}_{F}\|_{E_m\check{\otimes}F\to E_n^*\hat{\otimes}F}\leqslant C \|A\|_{E_m\to E_n^*}$ for all $m,n\in\mathbb{N},$ where $\dim E_n=n$, then both $F$ and $F^*$ must have finite cotype. Moreover, assuming that $F$ has the bounded approximation property and that the conjecture in \cite{PisierDuality} has an affirmative answer, we show that $(E_n^*)_{n\geqslant 1}$ satisfies G.T. uniformly. We show that the Grothendieck-type constant defined for a pair of Banach spaces $(E,F)$ is closely related to another interesting quantity introduced recently in \cite{XOR games and GT} comparing the projective and injective norms on the tensor product of two finite-dimensional Banach spaces $E$ and $F$. We also study  analogously the constants appearing in these extremal problems by restricting only to non-negative tensors. For contractive \emph{little} Parrott homomorphisms $\varrho_V : H^\infty(\Omega) \to M_{n}$, where $\Omega$ is the dual unit ball of a finite dimensional Banach space $(E,\|\cdot\|)$, we prove the sharp estimate
$
\|\varrho_V\|_{\mathrm{cb}}\leq\sqrt{\gamma(E)},
$
$\gamma(E)$ being the positive Grothendieck constant associated with the pair $(E, \ell^n_2)$. 
This yields a new proof of \cite[Theorem 2.1]{Davidchoi} using the lower bound $K_G^+(\ell_\infty^4,\ell_2^2) \geq 1.1658$ obtained in this paper. 
	\end{abstract}

\maketitle
\section{Introduction}
In \cite{Grothendieck}, Grothendieck proved a remarkable theorem, which he himself called the ``Fundamental theorem of metric theory of tensor products", is now referred to as Grothendieck's theorem (in short G.T.). It has been a useful tool in several applications in the geometry of Banach spaces, operator theory and operator algebras, harmonic analysis, theoretical computer science, quantum information theory, and other fields. The reader may consult the recent survey article \cite{Pisier} for further information on this topic. 
Perhaps the simplest equivalent formulation of G.T. is by Lindenstrauss and Pe\l{}czyński \cite{LinPel} saying: There is a universal constant $K_G$, called the Grothendieck constant, such that 
\begin{equation}\label{GI}
\sup\bigg\{\Big |\sum_{i=1}^m\sum_{j=1}^na_{ij}\langle v_i,w_j\rangle \Big|:\|v_i\|_2=\|w_j\|_2=1\bigg\}\leqslant K_G,
\end{equation}
where $\langle \cdot,\cdot \rangle$ and $\|\cdot\|_2$ denote the inner product and norm respectively, of $\ell^\mathbb R_2(\mathbb N),$ and the supremum is taken over every real $m\times n$ 
 matrix $A=(a_{ij})_{i=1,j=1}^{m,n}$, $m,n\in\mathbb N$,  with \[\sup\bigg\{\Big |\sum_{i=1}^m\sum_{j=1}^n a_{ij}s_i{t}_j \Big |:|s_i|,|t_j| \leq 1, \bigg\}\leqslant 1.\] 

Inequality \eqref{GI} is called the Grothendieck inequality. Another equivalent formulation (a very similar statement appears in \cite{HandbookI}) of the Grothendieck inequality, among many others,
is in \cite{RKS}: For any $m,n\in\mathbb N$ and $A=(a_{ij})_{1\leqslant i\leqslant m,1\leqslant j\leqslant n},$ there exists a positive constant $K$ such that 
\begin{equation} \label{msc} 
\|A\otimes \text{id}_{\ell_2}\|_{\ell_\infty^n\check{\otimes}\ell_2\to \ell_1^m\hat{\otimes}\ell_2}\leqslant K \|A\|_{\ell_\infty^n\to\ell_1^m}, \quad \forall A\in B(\ell^n_\infty, \ell_1^m),
\end{equation}
where $\text{id}_{\ell_2}$ denotes the identity operator $\text{id}:{\ell_2}\to \ell_2$ and $ B(X, Y)$ denotes the set of all linear maps from a normed linear space   $X$ to a normed linear space $Y$.
Moreover, the infimum of all admissible constants $K$ in \eqref{msc} coincides with the Grothendieck constant $K_G$. To prove this version, note that for any $m\times n$ scalar matrix $A=(a_{ij})_{1\leqslant i\leqslant m,1\leqslant j\leqslant n}$, we have \[\|A\|_{\ell_\infty^n\to\ell_1^m}=\sup\bigg\{\Big|\sum_{i=1}^m\sum_{j=1}^na_{ij}s_it_j\Big|:|s_i|=|t_j|=1\bigg\}.\]
Since $\ell_\infty^n\check{\otimes}\ell_2\cong\ell_\infty^n(\ell_2)$, it follows that $\|\sum_{i=1}^n\mathbf{e}_i\otimes x_i\|_{\ell_\infty^n\check{\otimes}\ell_2}=\max_{1\leqslant i\leqslant n}\|x_i\|_2$, where $\mathbf{e}_i$, $1\leqslant i \leqslant n$, is the standard basis of $\mathbb R^n$. Finally, by duality, we have $\|\sum_{j=1}^m\mathbf{e}_j\otimes x_j\|_{\ell_1^m\hat{\otimes}\ell_2}=\sum_{j=1}^m\|x_j\|_2$. The equality 
\begin{multline*} \sup \bigg\{\sum_{j=1}^n\Big \|\sum_{i=1}^ma_{ij}v_i \Big\|_2:\|v_i\|_2=1\bigg\}= \sup\bigg\{\Big |\sum_{i=1}^m\sum_{j=1}^na_{ij}\langle v_i,w_j\rangle \Big |:\|v_i\|_2=\|w_j\|_2=1\bigg\}\end{multline*}
is easy to verify, and the proof of the equivalence of the inequality in \eqref{msc} with Grothendieck inequality follows from it. 

In this article, we discuss a generalization of the Grothendieck inequality which is prompted by the equivalent form of the inequality we have just verified. All Banach spaces are assumed to be real if not mentioned otherwise. However, most of the results below also make sense for Banach spaces over the complex field. 
These can be proved with little or no change in the corresponding proof for the real case. 
We define the Grothendieck constant for a pair of Banach spaces, where the first space is finite-dimensional, as follows: 
\begin{definition} 
Let $(E,F)$ be a pair of Banach spaces with $E$ being finite-dimensional. Define the Grothendieck constant $K_G(E,F)$ to be the supremum  
\begin{eqnarray}\label{Definition-KG}
K_G(E,F):=\sup\left\{\|A\otimes \operatorname{id}_F\|_{E\check{\otimes}F\to E^*\hat{\otimes}F}:\|A\|_{E\to E^*}\leqslant 1\right\},
\end{eqnarray}
where $\operatorname{id}_F$ is the identity operator on $F$.
\end{definition} 
  Let $E$ be a finite-dimensional Banach space. In what follows, we employ the natural notion of positivity in $E\otimes E$, namely, $A\in E\otimes E$ is \textit{nonnegative} ($A\geqslant 0$) if it is in the convex hull of the set of symmetric tensors $e\otimes e$, $e \in E$. In other words, $A\geqslant 0$ if $A = B^*B$  for some $B \in E^*\otimes \ell_2^k$, $k\in\mathbb N$. If the supremum in \eqref{Definition-KG} defining $K_G(E,F)$ is restricted to non-negative matrices $A$, then we let the resulting constant $K_G^+(E, F)$ (say) be the {\it positive Grothendieck constant} corresponding to the pair $(E,F)$. Thus, 
\begin{equation}\label{Definition-KG+}
K_G^+(E,F) := \sup\left\{\|A\otimes \operatorname{id}_F\|_{E\check{\otimes}F\to E^*\hat{\otimes}F}:A \geqslant 0, \|A\|_{E\to E^*}\leqslant 1\right\}.\end{equation}
Evidently, for any pair of Banach spaces $E$ and $F$ we 
have $K_G^+(E,F)\leqslant K_G(E,F)$.

The positive Grothendieck constant $K_G^+:=\sup_{n\geqslant 1}K_G^{+}(\ell_\infty^n,\ell_2)$ has appeared in \cite[Theorem 4]{Rietz}. Although, it is also in \cite{Grothendieck} in a slightly different form. In the paper \cite{H}, the relationship of $K_G^+$ with the existence of orthogonally scattered dilations of Hilbert space-valued vector measures is discussed. More recently, in the PhD thesis of Bri\"{e}t \cite{Briet}, many variants of positive Grothendieck inequality and applications have been investigated. For any Hilbert space $\mathcal{H}$, the equality $\Pi_2(L_\infty,\mathcal H)= B(L_\infty,\mathcal H)$ is a manifestation of the finiteness of  $K_G^{+}$. Our motivation for defining the Grothendieck constant in greater generality is manifold, including but not limited to the following.

\begin{itemize}[leftmargin=\dimexpr 30pt]
    \item Recall that another reformulation of the Grothendieck inequality which is often referred to as the Grothendieck's theorem, is the equality $B(L_1,\ell_2) = \Pi_1(L_1,\ell_2)$. It is natural to ask which other Banach spaces possess such a property. 
A Banach space $E$ is said to be a G.T. space if $B(E,\ell_2) = \Pi_1(E,\ell_2)$, see \cite{Pisier-factorization}, via an equivalent norm.
The Grothendieck inequality \eqref{msc} establishes a non-trivial relationship between the three fundamental Banach spaces $\ell_\infty^n$, $\ell_1^m$, and $\ell_2$. A natural question arises: what happens if we replace $\ell_\infty^n$, $\ell_1^m$, and $\ell_2$ in the Grothendieck inequality \eqref{msc} with Banach spaces $E_n$, $E_m^*$, and $F$, respectively, where $(E_n)_{n \geqslant 1}$ is a sequence of finite-dimensional Banach spaces? Moreover, if $E_n$, $n\in\mathbb N$, and $F$  are taken to be finite-dimensional Banach spaces, then it is natural to expect that the ``quantitative'' information of the constant $K_G(E_n,F)$, now also depending on $n$, would lead to useful asymptotic behavior. 
    
    \item Recently, constants like $K_G(E,F)$ have appeared in quantum information theory and in particular XOR games in general probabilistic theories. For instance, similar constants have been studied in Proposition A.1 of \cite{XOR games and GT} to estimate the bias of a XOR game over a bipartite GPT under local strategies.

\item Like the Grothendieck inequality (and equivalently G.T.), a variant involving only non-negative definite matrices in \eqref{msc} (and equivalently the ``little G.T.") has been studied vigorously by many authors. We refer to \cite[Chapter 5]{Pisier-factorization} for more on this topic. In this article we prove that a finite-dimensional normed linear space $E$ has Property P, introduced earlier in \cite{BM}, if and only if $\sup_{m\geqslant 1}K^+_G(E,\ell^m_2) \leqslant 1$. The proof, in the real case, is provided in Proposition \ref{++++}. The proof for the complex case is similar and is omitted. This kind of interaction with Banach space geometry and operator theory is old and has been studied by several authors. We refer to the  papers \cite{vip}, \cite{BM} and \cite{AFHS95} and references therein for more information.
\end{itemize}

To facilitate the study of the Grothendieck constant $K_G(E,F)$ and its positive variant, we introduce the notion 
of a \textit{Grothendieck pair}. 

\begin{definition}[Grothendieck pair]\label{GTPDEF} Let $\mathcal{E}=(E_n)_{n\geqslant 1}$ be a sequence of finite-dimensional Banach spaces such that $\dim E_n=n.$ Let $F$ be a Banach space. Then $(\mathcal{E}, F)$ is called a {\textit Grothendieck pair} if there exists a constant $C>0$ such that $\|A\otimes \operatorname{id}_{F}\|_{E_m\check{\otimes}F\to E_n^*\hat{\otimes}F}\leqslant C \|A\|_{E_m\to E_n^*}$ for all  $m,n\in\mathbb{N}$. \end{definition}

 In Section 2, we gather definitions and preliminaries that we use throughout the paper. In Section 3, we prove several preparatory results. The main result of this paper is the following theorem and is proved in Section 4. 
\begin{theorem}\label{mainthm}
 Suppose that $(\mathcal E, F)$  is a Grothendieck pair. Then, both $F$ and $F^*$ are of non-trivial cotype.
\end{theorem}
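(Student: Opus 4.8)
The plan is to show the contrapositive: if $F$ is infinite dimensional and (at least) one of $F$, $F^*$ fails to have non-trivial cotype, then $(\mathcal E, F)$ cannot be a Grothendieck pair, i.e. $K_G(E_n, F)$ is unbounded in $n$. By the Maurey--Pisier theorem, ``$F$ (resp.\ $F^*$) has trivial cotype'' is equivalent to ``$\ell_\infty^n$ is finitely representable in $F$ (resp.\ $F^*$)'', uniformly; so the hypothesis gives, for every $n$ and every $\varepsilon>0$, subspaces $G_n \subseteq F$ with $d_{BM}(G_n, \ell_\infty^n) \leqslant 1+\varepsilon$ (or the corresponding statement in $F^*$). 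The idea is then to use these almost-isometric copies of $\ell_\infty^n$ inside $F$ to manufacture a concrete bad matrix $A$ for a suitable $E_n$.

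Concretely, I would first reduce to the model situation $F = \ell_\infty$ (or $F = \ell_1$, treating the $F^*$-case by a duality argument that swaps $\check\otimes$ and $\hat\otimes$ and replaces $A$ by its transpose). In this model case, take $E_n = \ell_1^n$, so $E_n^* = \ell_\infty^n$, and choose $A$ to be (a scaled version of) the identity or a Hadamard-type matrix: the point is that $\|A\|_{\ell_1^n \to \ell_\infty^n}$ is small — of order $1$ or $n^{-1/2}$ — while $\|A \otimes \operatorname{id}_F\|_{\ell_1^n \check\otimes F \to \ell_\infty^n \hat\otimes F}$ is large because $\ell_1^n \check\otimes \ell_\infty \cong \ell_\infty(\ell_1^n)$ and $\ell_\infty^n \hat\otimes \ell_\infty$ has the norm of $\ell_\infty(\ell_\infty^n) = \ell_\infty^n(\ell_\infty)$ only after we exploit that $\ell_1^n$ sits isometrically (up to constants) inside $\ell_\infty^N$ for $N$ exponential in $n$. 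The growth of the relevant tensor norm ratio is exactly what Grothendieck's theorem fails to control once $\ell_2$ is replaced by a space containing $\ell_\infty^n$'s uniformly; I would make this quantitative by tracking how the injective and projective tensor norms on $E_n \otimes F$ decouple, using the lower bounds for $\rho(E_n, F)$-type constants alluded to in the abstract, or directly via a volume/entropy estimate.

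The second half of the argument handles the general $\mathcal E = (E_n)$: the bad example above is built for a specific choice $E_n = \ell_1^n$, but a Grothendieck pair is required to work for \emph{some} fixed sequence with $\dim E_n = n$, not necessarily $\ell_1^n$. Here I would invoke the local theory: since $F$ contains $\ell_\infty^n$'s uniformly, and $\dim E_n = n$, every $E_n$ maps into $F$ with controlled norms (e.g.\ via John's theorem $d_{BM}(E_n, \ell_2^n) \leqslant \sqrt n$, or better, by choosing the test operator $A$ adapted to $E_n$ rather than to $\ell_1^n$). The cleanest route is probably to fix $A = \operatorname{id}_{E_n} \colon E_n \to E_n^*$ after identifying $E_n^*$ with $E_n$ via a suitable isomorphism, so $\|A\|_{E_n \to E_n^*}$ is the Banach--Mazur-type distance, while $\|A \otimes \operatorname{id}_F\|$ picks up a factor forced to grow because the identity on $E_n \check\otimes F$ versus $E_n \hat\otimes F$ has norm bounded below by something like the cotype-$2$ constant of $F$, which is infinite by assumption — and one extracts divergence by restricting to the $\ell_\infty^n \subseteq F$ copies and using $\|\operatorname{id}\|_{\ell_\infty^n \check\otimes \ell_\infty^m \to \ell_\infty^n \hat\otimes \ell_\infty^m} \sim \sqrt{\min(n,m)}$.

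The main obstacle I anticipate is the second paragraph's passage from the model space to the prescribed sequence $(E_n)$: one must produce a single matrix $A$ (depending on $n$ but not on any auxiliary embedding) that is simultaneously small in the $E_n \to E_n^*$ norm and large after tensoring with $\operatorname{id}_F$, and the estimate must be uniform over \emph{all} $n$-dimensional $E_n$, which is where the abstract local Banach space theory (finite representability of $\ell_\infty^n$, Maurey--Pisier, and the duality between trivial cotype of $F$ and of $F^*$) does the real work. A secondary subtlety is bookkeeping the two cases ($F$ vs.\ $F^*$ failing cotype) so that the $\check\otimes$/$\hat\otimes$ duality is applied correctly; once the quantitative lower bound on the tensor-norm ratio is in hand, concluding that $K_G(E_n,F) \to \infty$ and hence that $(\mathcal E, F)$ is not a Grothendieck pair is immediate, giving the contrapositive of the theorem.
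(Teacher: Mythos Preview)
Your overall architecture is right: argue the contrapositive, use Maurey--Pisier to translate ``trivial cotype'' into ``contains $\ell_\infty^n$'s uniformly'', and reduce the $F^*$-case to the $F$-case by the duality $K_G(E,F^*)=K_G(E,F)$. The model computations for specific $E_n$ (such as $\ell_1^n$ or $\ell_\infty^n$) are also in the right direction, though some of your tensor identifications are off (for instance $\ell_\infty^n\hat\otimes\ell_\infty$ is \emph{not} $\ell_\infty(\ell_\infty^n)$; that is the injective tensor product).

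The genuine gap is the passage to an \emph{arbitrary} sequence $(E_n)$ with $\dim E_n=n$. John's theorem gives only $f(\ell_2^n,E_n)\leqslant\sqrt{n}$, and the relevant subspace inequality is $K_G(E_n,F)\geqslant f(\ell_2^n,E_n)^{-2}K_G(\ell_2^n,F)$; the squared factorization constant costs you a factor of $n$, while $K_G(\ell_2^n,\ell_\infty^n)=\rho(\ell_2^n,\ell_\infty^n)$ is only of order $\sqrt{n}$, so the resulting lower bound is $\sim n^{-1/2}\to 0$. Your alternative ``take $A=\mathrm{id}_{E_n}$ and invoke $\rho(E_n,F)$'' does not help either, because $K_G(E,F)\leqslant\rho(E,F)$ is the inequality in the wrong direction. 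The paper closes this gap with the $\ell_1/\ell_2/\ell_\infty$ trichotomy theorem of Aubrun--Lami--Palazuelos--Szarek--Winter: for every $n$-dimensional $E_n$ and every parameter $1\leqslant A\leqslant\sqrt{n}$, one of $\ell_\infty^{c\sqrt n}$, $\ell_1^{c\sqrt n}$, $\ell_2^{cA^2/\log n}$ factors through $E_n$ with factorization constant only $O(A\sqrt{\log n})$ or $O(\log n)$. This polylogarithmic control, rather than the $\sqrt{n}$ from John, is what makes the argument go through: in each case one combines it with the known growth of $K_G(\ell_\infty^k,\ell_\infty^k)$, $K_G(\ell_1^k,\ell_\infty^k)$, $K_G(\ell_2^k,\ell_\infty^k)$ (all polynomial in $k$) and then chooses $A=n^{1/16}$ to force all three lower bounds to diverge. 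Without the trichotomy, there is no uniform-in-$E_n$ mechanism to produce the bad matrix you need.
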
 
We refer to Section \ref{prel} for any unexplained notation.
 For a Banach space $X$, set $p(X):=\sup\{p:X\ \text{is of type }p\}$ and $q(X):=\inf\{q:X\ \text{is of cotype } q\}$.
 Suppose that  $(\mathcal E, F)$  is a Grothendieck pair and $F$ is an infinite-dimensional GL-space (see \cite[pp. 350]{DJT}). 
Then combining Theorem \ref{mainthm} with \cite[Theorem 17.13]{DJT}, it follows that
$p(F)>1$. Moreover, following the proof of Corollary \ref{cor4.6}, one concludes that $(\mathcal E,\ell_2)$ is then also a Grothendieck pair. 
It is natural to ask when $(\mathcal{E},F)$ is a Grothendieck pair. After communicating this question to Pisier, he made the following conjecture \cite{Pisier17}.

\begin{conjecture}\label{Conpis}  Suppose that $(\mathcal{E},F)$ is a Grothendieck pair for a fixed but arbitrary Banach space $F$ with the bounded approximation property. Then either $\dim F<\infty$ or $(\mathcal{E},\ell_2)$ is also a Grothendieck pair.
\end{conjecture}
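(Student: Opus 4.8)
The conjecture is open, so what follows is a plan of attack rather than a completed proof; it also tries to pinpoint why the statement is delicate.

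\textbf{Step 1 (self-duality and the type/cotype constraints).} First I would record that the Grothendieck-pair condition is self-dual. Since each $E_n$ is finite dimensional, $(E_n\check{\otimes}F)^*=E_n^*\hat{\otimes}F^*$ and $(E_n^*\hat{\otimes}F)^*=E_n\check{\otimes}F^*$, and under these identifications the adjoint of $A\otimes\operatorname{id}_F$ is $A^t\otimes\operatorname{id}_{F^*}$, where $A^t:E_n\to E_n^*$ represents the transposed bilinear form and $\|A^t\|_{E_n\to E_n^*}=\|A\|_{E_n\to E_n^*}$. As $A$ runs over the unit ball of $B(E_n,E_n^*)$ so does $A^t$, hence $(\mathcal E,F)$ being a Grothendieck pair forces $(\mathcal E,F^*)$ to be one with the same constant. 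Combining this with Theorem \ref{mainthm} and the standing hypothesis $\dim F=\infty$, both $F$ and $F^*$ have non-trivial cotype; equivalently $F$ has non-trivial type and non-trivial cotype.

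\textbf{Step 2 (reduction to a complementation statement).} By Dvoretzky's theorem, for every $k$ and $\varepsilon>0$ there is $G\subseteq F$ with $d(G,\ell_2^k)\leqslant 1+\varepsilon$. The injective tensor norm respects subspaces isometrically, while the projective tensor norm respects a $\lambda$-complemented subspace up to the factor $\lambda$ (and every finite-dimensional subspace of $\ell_2$ is norm-one complemented, so the norm of $A\otimes\operatorname{id}_{\ell_2}$ is computed on finite-dimensional Hilbertian blocks). Chasing these functorialities, one checks that \emph{if} such a Dvoretzky subspace $G$ can always be taken $\lambda$-complemented in $F$ with $\lambda$ independent of $k$, then
\[
\|A\otimes\operatorname{id}_{\ell_2}\|_{E_n\check{\otimes}\ell_2\to E_n^*\hat{\otimes}\ell_2}\leqslant(1+\varepsilon)\,\lambda\,C\,\|A\|_{E_n\to E_n^*}
\]
for every $n$ and every $A$, i.e. $(\mathcal E,\ell_2)$ is a Grothendieck pair. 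So it suffices to prove: a Grothendieck pair $(\mathcal E,F)$ with $F$ an infinite-dimensional space having the bounded approximation property contains almost-Hilbertian subspaces of all finite dimensions that are \emph{uniformly} complemented.

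\textbf{Step 3 (using the hypotheses; the GL-space case).} The complementation cannot come from cotype alone: for $1<p<\infty$, $p\neq 2$, the space $\ell_p$ has $\ell_p$ and $\ell_p^*$ of finite cotype, yet its Dvoretzky subspaces $\ell_2^k$ are not uniformly complemented. Hence the Grothendieck-pair hypothesis on $(\mathcal E,F)$ itself must be exploited to manufacture the projections. A concrete handle is the reformulation of that hypothesis: $(\mathcal E,F)$ is a Grothendieck pair with constant $C$ iff $\nu(u\circ A^t:E_n\to F)\leqslant C\,\|A\|_{E_n\to E_n^*}\,\|u\|$ for every $u:E_n^*\to F$ and every $n$ (nuclear norm on the left), and, by Step 1, the same for $F^*$. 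Feeding the bounded approximation property of $F$ together with the non-trivial type and cotype from Step 1 into this two-sided nuclear-norm bound, one would try to produce a factorisation-through-Hilbert-space of an identity-type map with a constant uniform in $n$, and then to transfer the resulting Hilbertian directions back into uniformly complemented $\ell_2^k$'s inside $F$. When $F$ is a \emph{GL-space} this can be carried out — local unconditional structure, together with $F$ and $F^*$ having non-trivial cotype, lets one complement the Dvoretzky subspaces — and this is exactly the route recorded in the excerpt: \cite[Theorem 17.13]{DJT} yields $p(F)>1$, and the argument of Corollary \ref{cor4.6} then delivers the Grothendieck pair $(\mathcal E,\ell_2)$.

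\textbf{Main obstacle.} The crux is Step 3 for a \emph{general} bounded-approximation-property space: one must extract uniformly complemented Hilbertian subspaces from the Grothendieck-pair inequality with no unconditionality to lean on. The approximation property produces good finite-rank approximations of the identity, but not obviously projections onto the Hilbertian directions that the factorisation argument would single out, and I do not see how to run that argument purely from the two-sided nuclear-norm bounds of Step 3 plus the type/cotype of $F$. Closing this gap is precisely the content of Conjecture \ref{Conpis}, and, as far as I can tell, it requires an idea genuinely beyond the GL-space techniques.
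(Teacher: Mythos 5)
This statement is Conjecture \ref{Conpis} in the paper: it is attributed to Pisier and is \emph{not} proved there, so you were right to treat it as open and to offer a reduction rather than a completed argument. Your plan in fact tracks the paper's partial results closely: the self-duality of the Grothendieck-pair condition is Lemma \ref{basic-properties-KG}(i); your Step 2 reduction (uniformly complemented almost-Hilbertian subspaces of $F$ imply that $(\mathcal E,\ell_2)$ is a Grothendieck pair) is exactly how the paper proceeds, via Lemma \ref{KG and Subspaces}; and the GL-space case via \cite[Theorem 17.13]{DJT} is the paper's remark following Theorem \ref{mainthm}. What the paper adds, and what your ``main obstacle'' paragraph stops just short of, is Corollary \ref{cor4.6}: under the bounded approximation property the missing complementation would be supplied precisely by Pisier's older Conjecture \ref{Pisann} --- given that both $F$ and $F^*$ have finite cotype (Theorem \ref{mainthm}) and $F$ has BAP, Conjecture \ref{Pisann} would make $F$ $K$-convex, hence locally $\pi$-Euclidean (Theorem \ref{pieucli}), which yields exactly the uniformly complemented $\ell_2^k$'s your Step 2 requires. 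So the ``idea genuinely beyond GL-space techniques'' you ask for is identified in the paper as a known older conjecture, not something to be invented from scratch.

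One genuine error to fix: in Step 1 you assert that both $F$ and $F^*$ having non-trivial cotype is ``equivalently'' $F$ having non-trivial type and non-trivial cotype. That equivalence is false in general and is exactly the delicate point. Non-trivial type of $F$ does imply finite cotype of both $F$ and $F^*$, but the converse fails: Pisier exhibited a space $X$ with $X$ and $X^*$ of cotype $2$ that is not $K$-convex (so $p(X)=1$); such a space necessarily fails the approximation property, and the assertion ``finite cotype of $X$ and $X^*$ plus BAP implies $K$-convexity'' is precisely Conjecture \ref{Pisann}. If your claimed equivalence were available, then Theorem \ref{thm:2.6} and Theorem \ref{pieucli}, combined with your own Step 2, would already prove the conjecture outright, contradicting your (correct) assessment in Step 3 that the uniform complementation is the open crux. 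Delete the ``equivalently'' clause, or replace it by ``and, conditionally on Conjecture \ref{Pisann}, $F$ is $K$-convex,'' which is how the paper's Corollary \ref{cor4.6} formalizes the implication.
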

Clearly, by the previous discussion, Conjecture \ref{Conpis} is true  when $F$ is a GL-space. Surprisingly, Conjecture \ref{Conpis} is also related to an older conjecture by Pisier; see \cite[Final remarks (i)]{PisierDuality}.
\begin{conjecture}\label{Pisann}
If $X$ is an infinite-dimensional Banach space with bounded approximation property such that $q(X)<\infty$ and $q(X^*)<\infty,$ then $X$ is a $K$-convex space. 
\end{conjecture}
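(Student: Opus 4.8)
The plan is to proceed by contraposition and to recast the conjecture as an equivalent finite-dimensional statement about subspaces versus quotients, which one then tries to settle using the finite-rank operators furnished by the bounded approximation property. By Pisier's $K$-convexity theorem, $X$ is $K$-convex if and only if it has non-trivial type, that is, $p(X)>1$; so it suffices to show that $p(X)=1$ is incompatible with the bounded approximation property together with $q(X)<\infty$ and $q(X^{*})<\infty$.

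The first step is to translate every hypothesis into the language of finite representability. By the Maurey--Pisier theorem, $p(X)=1$ is precisely the statement that $\ell_{1}$ is finitely representable in $X$, so there are subspaces $E_{n}\subseteq X$ with $d_{\mathrm{BM}}(E_{n},\ell_{1}^{n})\to 1$; and since $K$-convexity is self-dual, $p(X^{*})=1$ too. On the other side, $q(X)<\infty$ is equivalent to $\ell_{\infty}^{n}$ \emph{not} embedding uniformly into $X$, and --- invoking the elementary (isometric) local duality under which finite-dimensional quotients of $X$ correspond exactly to finite-dimensional subspaces of $X^{*}$ and conversely --- $q(X^{*})<\infty$ is equivalent to $\ell_{1}^{n}$ \emph{not} being a uniform quotient of $X$. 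A short contrapositive argument shows that, modulo these classical facts, the conjecture is \emph{equivalent} to the local claim: \emph{if $X$ has the $\lambda$-bounded approximation property, $\ell_{1}^{n}$ embeds uniformly into $X$, and no $\ell_{\infty}^{n}$ embeds uniformly into $X$, then $\ell_{1}^{n}$ is a uniform quotient of $X$.}

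The approximation property must be used essentially here, since by Pisier's construction in \cite{PisierDuality} there is a Banach space that is of cotype $2$, whose dual is of cotype $2$, that is not $K$-convex, and that fails the approximation property; so any argument has to exploit a $\lambda$-bounded net $T_{\alpha}\to\operatorname{id}_{X}$ of finite-rank maps. The natural attempt runs as follows: given $E_{n}$ almost isometric to $\ell_{1}^{n}$, choose a finite-rank $T$ with $\|T\|\leqslant\lambda$ uniformly close to $\operatorname{id}_{X}$ on the ball of $E_{n}$, and factor $T=\iota_{G}\circ q_{G}$ through its range $G=T(X)$, so that $q_{G}\colon X\twoheadrightarrow G$ is a surjection and $G$ contains an almost-isometric copy of $\ell_{1}^{n}$; one then tries to extract, inside the finite-dimensional quotient $G$, honest $\ell_{1}^{m}$-quotients of $X$ with $m\to\infty$, using the absence of uniform $\ell_{\infty}^{n}$-subspaces of $X$ to constrain the geometry of $G$ and of its $\ell_{1}^{n}$-copy (here the Maurey--Pisier and Elton--Pajor extraction principles are the natural tools), and dualising --- passing to $T^{*}$ and to $X^{*}$ --- whenever a lifting across $q_{G}$ obstructs. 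It is worth first understanding the case already recorded in the excerpt: when $X$ is a $GL$-space, \cite[Theorem~17.13]{DJT} converts $q(X^{*})<\infty$ directly into $p(X)>1$, and a successful treatment of the BAP case would presumably show that a $\lambda$-bounded approximating net can be made to supply enough of that Gordon--Lewis mechanism at the finite-dimensional level.

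The step I expect to be the genuine obstacle is exactly the passage from ``$\ell_{1}^{n}$ as a subspace'' to ``$\ell_{1}^{n}$ as a quotient''. The bounded approximation property only gives strong-operator convergence $T_{\alpha}\to\operatorname{id}_{X}$, hence no uniform bound on projections onto the $E_{n}$ --- the Kadec--Snobar estimate gives only the useless bound $\sqrt{n}$ --- so there is no automatic complementation with which to turn a subspace into a quotient. A proof therefore seems to demand a new, ``approximation-property-respecting'' refinement of the Maurey--Pisier extraction machinery that manufactures proportional-dimensional $\ell_{1}$-quotients from $\ell_{1}^{n}$-subspaces using only a $\lambda$-bounded family of finite-rank operators and the absence of $\ell_{\infty}^{n}$-subspaces; keeping the extracted constants uniform in $n$ under the bare BAP hypothesis is where the real difficulty will lie.
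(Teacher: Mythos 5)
There is no proof of this statement to compare against: in the paper it is precisely Conjecture \ref{Pisann}, an open problem of Pisier (cf.\ \cite{PisierDuality}, and the ``Added in proof'' of \cite{Pisann} for the space showing the approximation hypothesis is essential); the paper never proves it, it only shows in Corollary \ref{cor4.6} that an affirmative answer would imply Conjecture \ref{Conpis}. Your proposal does not prove it either. What you have done correctly is translate the hypotheses: by Pisier's $K$-convexity theorem and Maurey--Pisier, together with the duality between finite-dimensional subspaces of $X^*$ and finite-dimensional quotients of $X$ (the restriction map $X\to E^*$ is a metric surjection exactly because its adjoint is the isometric inclusion $E\hookrightarrow X^*$), the conjecture is indeed equivalent to the contrapositive local statement you formulate: if $X$ has the $\lambda$-bounded approximation property, contains $\ell_1^n$'s uniformly, and contains no uniform $\ell_\infty^n$'s, then $\ell_1^n$'s must occur uniformly as quotients of $X$. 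Your remark that the approximation property cannot be dispensed with, because of Pisier's example of a non-$K$-convex space with $X$ and $X^*$ of cotype $2$ (that example is in \cite{Pisann}, not \cite{PisierDuality}), is also accurate.

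The genuine gap is that this reformulation is the entire content of the conjecture, and the step you sketch to close it does not work and is acknowledged by you not to work. Factoring a finite-rank approximant $T$ through its range $G=T(X)$ only exhibits $G$ as a quotient of $X$ containing an almost-isometric copy of $\ell_1^n$; to conclude you would need $\ell_1^m$ ($m$ growing with $n$) to be a uniform further quotient of $G$ compatibly with $q_G$, i.e.\ some uniform complementation or lifting, and the bounded approximation property gives only strong-operator convergence of a $\lambda$-bounded net, with no uniform bound on projections onto the $\ell_1^n$-copies (as you note, Kadec--Snobar gives only $\sqrt{n}$). No mechanism is supplied that keeps the relevant constants independent of $n$, and invoking Maurey--Pisier or Elton--Pajor extraction, or the Gordon--Lewis route of \cite[Theorem 17.13]{DJT} (which requires the GL-property, not the BAP), does not fill this hole. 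So the proposal is a correct restatement plus a programme, not a proof; the statement remains open, which is exactly how the paper treats it.
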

At the end of the paper \cite{Pisann}, under ``Added in proof'', the existence of a Banach space $X$ such that both $X$ and $X^*$ are of cotype $2$, although $X$ is not $K$-convex (so that $p(X) = 1$) was asserted. Furthermore, it was noted that such a space (which necessarily does not have the approximation property) contains uniformly complemented $\ell_p^n$'s for no $p$ such that $1 \leqslant p \leqslant \infty$.

In a private communication \cite{Pisier17}, Pisier had hinted that an affirmative answer to Conjecture \ref{Pisann} might establish Conjecture \ref{Conpis}. Corollary \ref{cor4.6} in this article verifies this assertion. 
This verification relies on Theorem \ref{mainthm}. The proof of Theorem \ref{mainthm} depends on a deep `$\ell_1/ \ell_2/ \ell_\infty$' trichotomy theorem recently proved in \cite{XOR games and GT}. 
In Proposition \ref{originalgt}, we give several equivalent conditions for a sequence of finite-dimensional Banach spaces $\mathcal E:=(E_n)_{n\geqslant 1}$ such that $(\mathcal E,\ell_2)$ is a Grothendieck pair. One of them says that $(E_n^*)_{n\geqslant 1}$ will have to satisfy G.T. uniformly. Proposition \ref{originalgt} was communicated to one of the authors by G. Pisier in an email message \cite{Pisier17} along with most of the proofs. 
Along the way, we obtain asymptotic bounds for the constant $K_G(E,F)$ associated with several finite-dimensional Banach spaces, namely $\ell_p^n$ and $S_p^{n,\text{sa}}$, the set of all $n\times n$ self-adjoint matrices equipped with the Schatten $p$-norm. 
To avoid any confusion over the terminology, we point out that the notion of `GT pair'  introduced by Pisier in \cite{Pisier} is distinct from the Grothendieck pairs discussed in this paper. 

  We begin our study of the positive variant $K_G^+(E,F)$ of the Grothendieck constant for various finite-dimensional Banach spaces $E$ and $F$ in Section \ref{S5}. The imposition of this additional condition makes the computation of $K_G^{+}(E,F)$ somewhat more difficult.  In this section, we also introduce the relevant notion of a \textit{G. T. flag} and find an interesting connection with Hilbert-Schmidt spaces, see Proposition \ref{prop:5.5}.  Also, we find that the behavior of $K_G^{+}(E,F)$ is quite different from that of $K_G(E,F)$. For example, $K_G(\ell_1^n,\ell_\infty^n)=o(\sqrt{n})$, whereas $K_G^{+}(\ell_1^n,\ell_\infty^n)$ is uniformly bounded by the Grothendieck constant (see Proposition \ref{positive-1-infinity}). Interestingly, the former estimate involves the real part of the Discrete Fourier Transform matrix. 
  
One of the key tools employed in our paper is the constant \( \rho(E, F) \), recently introduced in \cite{XOR games and GT} for finite-dimensional Banach spaces. It is defined as the operator norm of the identity map from the injective tensor product to the projective tensor product:
\[
\rho(E, F) := \left\| \mathrm{id}_E \otimes \mathrm{id}_F \right\|_{E \check{\otimes} F \to E \hat{\otimes} F}.
\]
Intuitively, this constant measures the maximal distortion between the injective and projective norms on \( E \otimes F \).
However, in this paper, we don't restrict the definition of $\rho$ to finite-dimensional Banach spaces. Indeed, it makes sense if one of $E$ and $F$ is finite-dimensional.    
Taking a finite-dimensional Banach space $E$ ($=F$), in the definition of $\rho$ and restricting to non-negative tensors of $E\otimes E$,
in the definition of $\rho$, we get a variant of the original $\rho$, and denote it by $\rho^+(E)$. For a real Banach space $E$,  Property Q 
introduced earlier in \cite{BM} is equivalent to requiring $\rho^+(E) \leqslant 1$. 
In this paper along with the Grothendieck constants, $K_G$ and $K_G^+$, we also study the constants $\rho$ and $\rho^+$. Among other things, Theorem \ref{rho+-1-1} in this section shows that $\rho^+(\ell_1^n) \geq c \sqrt{n}$ for some positive constant $c$. 

In Section \ref{adde}, we discuss applications of our results. These involve only the complex positive Grothedieck constant. In this Section, we recall in detail the relationship between Parrott homomorphisms and a class of linear maps between two finite dimensional Banach spaces. The contractivity versus the complete contractivity of these homomorphisms is measured by a constant $\alpha$ introduced by Paulsen in \cite{vip}. However, we discuss a smaller class of these that we call \textit{little Parrott homomorphisms} and show that in this smaller class, the measure of the contractivity versus complete contractivity is closely related to postive Grothendieck constant as defined in \eqref{Definition-KG+}, see Theorem \ref{thm5.17}. Moreover, we establish the exact asymptotic behavior of this constant for several naturally occurring Banach spaces, while the precise computation of \( \alpha \) for the corresponding spaces appears to remain widely open \cite{vip}. Here, we also provide a simple proof of the assertion: If $n\leqslant 3$, then $\sup_{m\geqslant 1}K_G^{+}(\ell_\infty^n,\ell_2^m)=1$ by  using bounds on the rank of the extreme points of correlation matrices  obtained  in \cite{LCB}. Moreover, following \cite{AFHS95}, we first show that $K_G^{+}(\ell_\infty^4,\ell_2^2)\geqslant 1.1658$ and then use it to  obtain a  quick proof of \cite[Theorem 2.1]{Davidchoi}.

\section{Preliminaries}\label{prel} 
Let $E$ and $F$ be Banach spaces. The norm of an operator $u:E\to F$ is denoted by $\|u\|_{E\to F}$ or $\|u\|$ whenever the meaning is clear from the context. We let $B(E,F)$ denote the linear space of all bounded linear maps from $E$ to $F$ equipped with the operator norm. The closed unit ball of $E$ is denoted by $(E)_1$. The Banach-Mazur distance $d(E,F)$ between two isomorphic Banach spaces $E$ and $F$ is defined as follows:
\begin{eqnarray*}
d(E,F):=\inf\big \{\,\|u\| \|u^{-1}\| \,\mid\, u:E\to F\,\, \mbox{\rm bounded invertible}\big \}.
\end{eqnarray*}
 If $d(E,F)\leqslant \lambda$ for some $\lambda>0$, then $E$ is said to be $\lambda$-isomorphic to $F$. 
The {\it factorization constant} of a Banach space $E$ through another Banach space $F$ is defined to be 
\begin{eqnarray*}
f(E,F):=\inf \big \{ \|u\| \|v\|\, \mid \,  u:E\to F,\, v:F\to E, \mbox{\rm and}\,v u= \mbox{\rm id}_E \big \},
\end{eqnarray*}
whenever it exists.
Evidently, $d(E,F)=f(E,F)$ whenever $E$ and $F$ are finite-dimensional Banach spaces with $\dim E=\dim F.$ Moreover, $f(E^*,F^*)\leqslant f(E,F)$ with equality if both $E$ and $F$ are finite-dimensional.

Let $E$ be a Banach space, and assume that $\lambda\geqslant 1$. We say that $E$ contains $\ell_p^n$'s $\lambda$-uniformly if there exists a sequence of subspaces $(E_n)_{n\geqslant 1}$ of $E$ such that $\sup_{n\geqslant 1} d(\ell_p^n, E_{n})\leqslant \lambda$.
Dvoretzky's theorem asserts that any infinite-dimensional Banach space $E$ contains $\ell_2^n$'s $(1+\epsilon)$-uniformly for all $\epsilon >0$.

\subsection{Norms on tensor product of Banach spaces:} In what follows we identify the algebraic tensor product $E\otimes F$ with a subspace of $B(E^*,F)$. Any tensor $u$ of the form $\sum_{j=1}^n e_j\otimes f_j$, with $e_j\in E$ and $f_j\in F,$ defines a linear map $u:E^* \to F$ by setting $u(e^*) := \sum_{j=1}^n e^*(e_j) f_j$, $e^*\in E^*$. 
The injective tensor norm $\|u\|_\vee$ is the operator norm $\|u\|_{E^* \to F}$. Moreover, the projective norm $\|u\|_\wedge$ is defined to be 
\[\|u\|_\wedge=\inf \Big \{ \sum_{j=1}^n \|e_j\|_E \|f_j\|_F \big| u = \sum_{j=1}^n e_j \otimes f_j \Big \},\] 
where the infimum is taken over all representations of $u.$
We let $E\check{\otimes}F$ and $E\hat{\otimes}F$ denote the completion of the linear space $E\otimes F$ equipped with the {\it injective} and {\it projective} tensor norms, respectively. If $E$ is finite-dimensional, we have the remarkable duality $(E\check{\otimes}F)^*\cong E^*\hat{\otimes}F^*$, and $(E\hat{\otimes}F)^*\cong E^*\check{\otimes}F^*$ via  the equality $\langle e\otimes f,e^*\otimes f^*\rangle=e^*(e)f^*(f).$
Note that the canonical operator $J:E^*\hat{\otimes}F\to B(E,F)$ defined as $J(e^*\otimes f)(e):=e^*(e)f,$ is an isomorphism, when $E$ and $F$ are finite-dimensional Banach spaces. When $E$ and $F$ are finite-dimensional, the nuclear norm of $u\in B(E,F)$ is defined to be $\|J^{-1}(u)\|_{E^*\hat{\otimes}F}$ and is denoted by $N(u)$. We denote $N(E,F)$ to be the linear space $B(E,F)$ equipped with the nuclear norm. 

We recall a very useful property of the norm, namely, let $u\in B(X,E)$, $w\in B(F,Y)$ and $v\in N(E,F)$. Then  
\[N(wvu) \leqslant \|u\|_{X\to E}\,\, N(v)\,\,\|w\|_{F\to Y}.\]

Our main reference for norms on the tensor product of Banach spaces and their properties is \cite{Raymond02}.
The following theorem due to S. Chevet (see \cite[Theorem 3.20]{LT}) is useful for estimating injective norm of random tensors.
\begin{theorem}[Chevet's theorem] \label{chevet}
Let $X$ and $Y$ be real finite-dimensional Banach spaces. Define the Gaussian random tensor $z=\sum_{i=1}^m\sum_{j=1}^ng_{ij}x_i\otimes y_j\in X\otimes Y,$ where $(g_{ij})$ are i.i.d $N(0,1)$ Gaussian random variable and $(x_i)_{i=1}^m\subseteq X$, $(y_j)_{j=1}^n\subseteq Y.$ Let $(g_i)_{i=1}^n$ be a sequence of i.i.d $N(0,1)$ Gaussian random variables. Then \[\mathbb{E}\|z\|_{X\check{\otimes}Y}\leqslant \|T\|_{\ell_2^m\to X}\mathbb{E}\Big\|\sum_{i=1}^mg_iy_i\Big\|_{Y}+\|S\|_{\ell_2^n\to Y}\mathbb{E}\Big\|\sum_{i=1}^ng_ix_i\Big\|_{X}\] where $T(\mathbf{e}_i):=x_i$ and $S(\mathbf{e}_j):=y_j$ for $1\leqslant i\leqslant m$, $1\leqslant j\leqslant n,$ and $(\mathbf{e}_i)_{i\geqslant 1}$ is the canonical basis of $\ell_2.$ 
\end{theorem}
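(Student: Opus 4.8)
The plan is to deduce the inequality from the Sudakov--Fernique Gaussian comparison inequality, which is the standard route to Chevet's theorem (see \cite{LT}). First I would realize the left-hand side as the expected supremum of a Gaussian process. Identifying $X\check{\otimes}Y$ isometrically with a subspace of $B(X^*,Y)$, under which $\|\cdot\|_{X\check{\otimes}Y}$ is the operator norm, and using that the unit balls $(X^*)_1$ and $(Y^*)_1$ are symmetric, one has
\[\|z\|_{X\check{\otimes}Y}=\sup_{\xi\in(X^*)_1,\ \eta\in(Y^*)_1}\sum_{i=1}^{m}\sum_{j=1}^{n}g_{ij}\,\xi(x_i)\,\eta(y_j)=:\sup_{(\xi,\eta)\in T}G_{\xi,\eta},\]
where $T=(X^*)_1\times(Y^*)_1$ is compact (since $X,Y$ are finite dimensional) and $G$ is a centred Gaussian process with a.s.\ continuous sample paths on $T$. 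Writing $a=a(\xi):=T^{*}\xi=(\xi(x_i))_{i=1}^{m}\in\ell_2^m$ and $b=b(\eta):=S^{*}\eta=(\eta(y_j))_{j=1}^{n}\in\ell_2^n$, a direct variance computation gives $\mathbb{E}\,|G_{\xi,\eta}-G_{\xi',\eta'}|^{2}=\|a\otimes b-a'\otimes b'\|_{\mathrm{HS}}^{2}$.

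Next I would introduce the comparison process. Let $(g'_i)_{i=1}^{m}$ and $(g''_j)_{j=1}^{n}$ be standard Gaussian vectors, independent of each other, and set
\[H_{\xi,\eta}:=\|b(\eta)\|_2\,\langle g',a(\xi)\rangle+\|a(\xi)\|_2\,\langle g'',b(\eta)\rangle,\]
which is again a centred, a.s.\ continuous Gaussian process on $T$. By independence, $\mathbb{E}\,|H_{\xi,\eta}-H_{\xi',\eta'}|^{2}=\big\|\,\|b\|_2 a-\|b'\|_2 a'\,\big\|_2^{2}+\big\|\,\|a\|_2 b-\|a'\|_2 b'\,\big\|_2^{2}$, and I would check that this dominates the corresponding $G$-increment: after expanding, the difference of the two quantities is affine in $\langle a,a'\rangle$ with non-positive slope (because $\langle b,b'\rangle\le\|b\|_2\|b'\|_2$), hence minimal at $\langle a,a'\rangle=\|a\|_2\|a'\|_2$, where it collapses to $\big(\|a\|_2\|b\|_2-\|a'\|_2\|b'\|_2\big)^{2}\ge 0$. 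Thus $\mathbb{E}\,|G_{\xi,\eta}-G_{\xi',\eta'}|^{2}\le\mathbb{E}\,|H_{\xi,\eta}-H_{\xi',\eta'}|^{2}$ for all pairs in $T$, and the Sudakov--Fernique inequality yields $\mathbb{E}\sup_{T}G\le\mathbb{E}\sup_{T}H$.

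It then remains to estimate $\mathbb{E}\sup_{T}H$, which I would do by splitting the supremum of the sum. For fixed $\xi$ one has $\sup_{\eta}\|b(\eta)\|_2\langle g',a(\xi)\rangle=\|S\|_{\ell_2^n\to Y}\max\{0,\langle g',a(\xi)\rangle\}$, because $\sup_{\eta\in(Y^*)_1}\|S^{*}\eta\|_2=\|S\|_{\ell_2^n\to Y}$ while $0\in(Y^*)_1$ forces the infimum to vanish; taking $\sup_{\xi}$ and using $\sup_{\xi\in(X^*)_1}\langle g',T^{*}\xi\rangle=\sup_{\xi\in(X^*)_1}\xi(Tg')=\|\sum_{i=1}^{m}g'_i x_i\|_X\ge 0$ gives $\mathbb{E}\sup_{(\xi,\eta)}\|b(\eta)\|_2\langle g',a(\xi)\rangle=\|S\|_{\ell_2^n\to Y}\,\mathbb{E}\|\sum_{i=1}^{m}g'_i x_i\|_X$. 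Symmetrically, $\mathbb{E}\sup_{(\xi,\eta)}\|a(\xi)\|_2\langle g'',b(\eta)\rangle=\|T\|_{\ell_2^m\to X}\,\mathbb{E}\|\sum_{j=1}^{n}g''_j y_j\|_Y$. Adding the two and combining with the previous paragraph yields the asserted bound (the two i.i.d.\ standard Gaussian averages on the right being of lengths $n$ and $m$, which is what the statement intends). The only genuinely computational step is the increment comparison of the second paragraph; the rest is bookkeeping, the sole subtlety being the sign tracking when factoring the supremum of $H$, which is precisely where $0\in(X^*)_1$ and $0\in(Y^*)_1$ are used.
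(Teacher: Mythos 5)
Your argument is correct, and it is essentially the classical proof of Chevet's theorem via the Sudakov--Fernique comparison (realize the injective norm as the supremum of the Gaussian process $G_{\xi,\eta}$, dominate its increments by those of the decoupled process $H_{\xi,\eta}$, then factor the supremum of $H$); this is exactly the route in \cite{LT}, which the paper cites for Theorem \ref{chevet} without reproducing a proof. You are also right that the summation limits in the displayed inequality of the statement are transposed: the Gaussian average paired with $\|T\|_{\ell_2^m\to X}$ should run over the $n$ vectors $y_j$ and the one paired with $\|S\|_{\ell_2^n\to Y}$ over the $m$ vectors $x_i$, which is what your derivation produces.
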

\begin{definition}[$p$-summing operator]
Let $u:E\to F$ be a linear operator between two Banach spaces and $p\in [1,\infty)$. We say $u$ is $p$-summing if there exists a constant $C>0$ such that for any finite sequence $(x_i)$ in $E,$
$$\Big(\sum\|ux_i\|^p\Big)^{\frac{1}{p}}\leqslant C\sup\Big\{\big(\sum|x^*(x_i)|^p\big)^{\frac{1}{p}}:x^*\in (E^*)_1\Big\}.$$ 
Moreover, the best constant $C$ in the above inequality is denoted by $\pi_p(u)$ and is said to be the $p$-summing norm of $u.$ The set of all $p$-summing operators from $E$ to $F$ is denoted by $\Pi_p(E,F).$
\end{definition}
A linear operator $u:E\to F$ is said to factor through a Hilbert space if there is a Hilbert space $\mathcal H$  and linear operators $B:E\to\mathcal H$ and $A:\mathcal H\to F$ such that $u=AB$. We then define $\gamma_2$-norm of $u$ by $\gamma_2(u):=\inf\|A\|\|B\|,$ where the infimum runs over all possible factorizations of $u$ through a Hilbert space. The space of all linear operators from a Banach space $E$ to a Banach space $F,$ which factors through a Hilbert space, becomes a Banach space equipped with the $\gamma_2$-norm and is denoted by $\Gamma_2(E,F).$
\begin{definition} \label{GTunif}
 A family of Banach spaces $(E_n)_{n\geqslant 1}$ is said to satisfy GT uniformly if there exists $c>0$ such that for all $m,n\geqslant 1$ and any map $u:E_m\to\ell_2^n,$ we have $\pi_1(u)\leqslant c\|u\|_{E_m\to\ell_2^n}.$
\end{definition}
\subsection{Type, cotype and related notions:}
Let $(\epsilon_i)_{i=1}^n$ be a sequence of i.i.d. Bernoulli random variables taking values in $\{+1,-1\}$ with equal probabilities.
\begin{definition}
    A Banach space $E$ has Rademacher type $p$ (in short, type $p$) for some $1\leqslant p\leqslant 2$ if there is a constant $C>0$ such that for all $n\geqslant 1$ and $e_1,\dots,e_n\in E$ \[\Big(\mathbb E \Big\|\sum_{i=1}^n\epsilon_ie_i\Big\|_E^2\Big)^{\frac{1}{2}}\leqslant C\Big(\sum_{i=1}^n\|e_i\|^p\Big)^{\frac{1}{p}}.\] 
    The best constant $C$ in the above inequality is denoted by $T_p(E).$ 
    Analogously, a Banach space $E$ is said to have Rademacher cotype $q$ (in short, cotype $q$) if for some $2\leqslant q\leqslant\infty$ there is a constant $C>0$ such that for all $n\geqslant 1$ and $e_1,\dots,e_n\in E$ \[\Big( \mathbb E\Big\|\sum_{i=1}^n\epsilon_ie_i\Big\|_E^2\Big)^{\frac{1}{2}}\geqslant C^{-1}\Big(\sum_{i=1}^n\|e_i\|^q\Big)^{\frac{1}{q}}.\]The best constant $C$ in the inequality above is denoted by $C_q(E).$ 
\end{definition}
\begin{definition}
  A family of Banach spaces $(E_n)_{n\geq 1}$ is called uniformly of cotype $q$ if there exists a constant $M>0$ such that $C_q(E_n)\leqslant M$ for each $n\in\mathbb N.$  
\end{definition}
For each $n\in\mathbb N,$ let $G_n$ be the compact abelian group $\{+1,-1\}^n$ with the normalized Haar measure. Let $X$ be a Banach space. For any $f:G_n\to X,$ suppose $f=\sum_{\gamma\in \widehat{G}_n}\widehat{f}(\gamma)\gamma $ is its Fourier-Walsh expansion, where $\widehat{G}_n$ is the Pontryagin dual of $G_n$. Let $R_n:L_2(G_n;X)\to L_2(G_n;X)$ be the projection defined by $R_n(f):=\sum_{i=1}^n\widehat{f}(\epsilon_i)\epsilon_i.$ Define 
\[K(X):=\sup_{n\geqslant 1}\|R_n\|_{L_2(G_n;X)\to L_2(G_n;X)}.\]  
\begin{definition}
 We say a Banach space $X$ is {\it $K$-convex} if $K(X)$ is finite.   
\end{definition}
In Theorems \ref{Maurey-Pisier}, \ref{thm:2.6} and \ref{pieucli}, $F$  is an infinite-dimensional Banach space. The following theorem is due to Maurey and Pisier. 
\begin{theorem} \cite[Theorem 3.3(ii)]{Pis85}\label{Maurey-Pisier}
A Banach space $F$ has finite cotype if and only if $F$ does not contain $\ell_\infty^n$'s $\lambda$-uniformly for any $\lambda \geqslant  1.$
\end{theorem}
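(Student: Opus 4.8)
The plan is to prove the two implications separately; the forward direction (finite cotype $\Rightarrow$ no uniform $\ell_\infty^n$'s) is elementary, while the reverse is the hard part, where I would appeal to the Maurey--Pisier machinery. For the easy direction I would argue by contraposition: suppose $F$ contains $\ell_\infty^n$'s $\lambda$-uniformly, so there are subspaces $E_n\subseteq F$ and isomorphisms that, after rescaling, may be taken to be operators $u_n\colon\ell_\infty^n\to E_n$ with $\|u_n\|\leqslant 1$ and $\|u_n^{-1}\|\leqslant\lambda$. Put $f_i:=u_n(\mathbf e_i)$ for $1\leqslant i\leqslant n$. Then $\|f_i\|\geqslant\|u_n^{-1}\|^{-1}\geqslant\lambda^{-1}$, while for every choice of signs $\|\sum_{i=1}^n\epsilon_i f_i\|=\|u_n(\sum_i\epsilon_i\mathbf e_i)\|\leqslant 1$, hence $(\mathbb E\|\sum_i\epsilon_i f_i\|^2)^{1/2}\leqslant 1$. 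If $F$ had cotype $q<\infty$ with constant $C$, the cotype inequality applied to $f_1,\dots,f_n$ would force $1\geqslant C^{-1}(\sum_i\|f_i\|^q)^{1/q}\geqslant C^{-1}\lambda^{-1}n^{1/q}$ for all $n$, which is absurd; thus $F$ has no finite cotype. (In essence this only records that the cotype-$q$ constant of $\ell_\infty^n$ is at least $n^{1/q}$, witnessed by its unit vector basis.)

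For the reverse direction I would prove the contrapositive: if $F$ has no finite cotype then $F$ contains $\ell_\infty^n$'s $\lambda$-uniformly, in fact for every $\lambda>1$. Here the route is the cotype half of the Maurey--Pisier theorem: for every infinite-dimensional Banach space $X$, setting $q_X:=\inf\{q\geqslant 2: X\text{ has cotype }q\}$ and reading $\ell_\infty$ as $c_0$, the space $\ell_{q_X}$ is finitely representable in $X$. The hypothesis ``$F$ has no finite cotype'' is precisely the statement $q_X=\infty$, so $c_0$ is finitely representable in $F$; unravelling this means that for every $n$ and every $\varepsilon>0$ there is a subspace of $F$ that is $(1+\varepsilon)$-isomorphic to $\ell_\infty^n$, which is more than what is asked.

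The entire substance sits in the Maurey--Pisier theorem invoked above, and that is the step I expect to be hard; the paper may legitimately quote it from \cite{Pis85}. If one wanted a self-contained proof of the case actually needed ($q_X=\infty$), the strategy would be to translate the failure of cotype $q$, for every finite $q$, into the existence of finite families of vectors whose Rademacher average is as ``flat'' as one wishes relative to their $\ell_q$-norm, and then, by a pigeonhole/stabilization argument (the heart of the Maurey--Pisier method, in the same spirit as a spreading-model construction), to extract from such a family finitely many vectors forming an almost-isometric $\ell_\infty^n$-basis. The delicate points are producing genuine $\ell_\infty^n$-bases rather than merely unconditional ``flat'' combinations, and keeping the isomorphism constants under control through the limiting step; Krivine's theorem, applied along a suitable block sequence, can replace the stabilization lemma but introduces its own reduction issues, so in practice I would cite the theorem rather than reprove it.
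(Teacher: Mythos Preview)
Your proposal is correct, but there is nothing to compare it against: the paper does not prove this theorem. It appears in the Preliminaries section as a quoted result (attributed to Maurey and Pisier, with an explicit citation to \cite{Pis85}, Theorem~3.3(ii)) and is used as a black box in the proof of Theorem~\ref{mainthm}. So the ``paper's own proof'' is simply a reference to the literature.

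Given that, your write-up actually supplies more than the paper does. The forward direction you give is the standard one and is fine as written. For the converse you correctly identify that the content is the cotype half of the Maurey--Pisier theorem (finite representability of $\ell_{q_X}$, read as $c_0$ when $q_X=\infty$), and you are right that in practice one cites it rather than reproving it; this is exactly what the paper does.
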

Theorem 5.4 of \cite{Pis85} says that a Banach space $F$ is $K$-convex if (and only if) it does not contain $\ell_1^n$'s uniformly. Combining this with Theorem 3.1(i) of the same paper \cite{Pis85}, we infer the following. 

\begin{theorem} \label{thm:2.6} 
A Banach space $F$ is $K$- convex if and only if $p(F)>1$. 
\end{theorem}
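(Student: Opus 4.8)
The strategy is to route the equivalence through the intermediate condition ``$F$ does not contain $\ell_1^n$'s uniformly'', combining the two results of \cite{Pis85} recalled just above the statement. So the proof is a two-step concatenation rather than a fresh argument.

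\emph{Step 1.} First I would invoke Theorem 5.4 of \cite{Pis85}: a Banach space $F$ is $K$-convex if and only if $F$ does not contain $\ell_1^n$'s uniformly. This is where the genuine content lies, and it is the step I expect to be the main obstacle if one insists on a self-contained proof. One direction is elementary --- if $F$ contains $\ell_1^n$'s $\lambda$-uniformly, then an averaging argument on the unit vector basis shows that the Rademacher projections $R_n$ have norms tending to infinity, hence $K(F)=\infty$ --- but the converse, that the mere absence of uniformly isomorphic copies of $\ell_1^n$ already forces a bounded Rademacher projection, is Pisier's deep theorem, and I would simply quote it.

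\emph{Step 2.} Next I would record that $p(F)>1$ if and only if $F$ does not contain $\ell_1^n$'s uniformly. The forward implication is a direct scaling estimate: if $F$ has type $p$ with constant $C$ for some $p>1$ and $E_n\subseteq F$ satisfies $d(\ell_1^n,E_n)\leqslant\lambda$, then $\ell_1^n$ has type $p$ with constant $\lambda C$, and testing the type-$p$ inequality on the unit vector basis $(\mathbf e_i)_{i=1}^n$ of $\ell_1^n$ yields $n\leqslant\lambda C\,n^{1/p}$ for all $n$, which is impossible since $p>1$; hence $p(F)=1$. The reverse implication is precisely the Maurey--Pisier theorem, Theorem 3.1(i) of \cite{Pis85}: $F$ contains $\ell_{p(F)}^n$'s $(1+\varepsilon)$-uniformly for every $\varepsilon>0$, so if $p(F)=1$ then $F$ contains $\ell_1^n$'s uniformly. (Both implications can equivalently be read off the Maurey--Pisier structure theorem for the type index.)

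\emph{Step 3.} Concatenating Steps 1 and 2 gives $F$ is $K$-convex $\iff$ $F$ does not contain $\ell_1^n$'s uniformly $\iff$ $p(F)>1$, which is the assertion. No estimate beyond the two cited theorems is needed; the only mild care required is to track the dependence of the type constant on the Banach--Mazur distance in the scaling argument of Step 2, which is routine.
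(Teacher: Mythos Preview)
Your proposal is correct and follows exactly the route the paper itself takes: the paper does not give a separate proof but simply states that the result is obtained by combining Theorem~5.4 of \cite{Pis85} (K-convexity $\iff$ no uniform $\ell_1^n$'s) with Theorem~3.1(i) of \cite{Pis85} (the Maurey--Pisier theorem for the type index), which is precisely your Steps~1--3. The only addition you make is spelling out the elementary direction of Step~2, which is fine and does not depart from the paper's approach.
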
 
A Banach space $F$ is said to be {\it locally $\pi$-Euclidean} if there exists a constant $C>0$ such that for each $\epsilon>0$ and each positive integer $n$, there is a positive integer $N(n,\epsilon)$ such that every subspace $E\subseteq F$ with $\text{dim}E \geqslant N$ contains an $n$-dimensional subspace $G\subseteq E$ such that $d(G,\ell_2^n)\leqslant 1+\epsilon$ and there is a projection from $F$ onto $G$ with norm less than $C.$ 
\begin{theorem}[Theorem 5.10, \cite{Pis85}]\label{pieucli}
A Banach space $F$ is locally $\pi$-Euclidean if and only if $F$ is $K$-convex.
\end{theorem}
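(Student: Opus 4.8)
The plan is to prove the two implications separately.

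\textbf{Locally $\pi$-Euclidean $\Rightarrow$ $K$-convex.} I would argue by contraposition. If $F$ is not $K$-convex, then by the result recalled above (Theorem~5.4 of \cite{Pis85}) $F$ contains $\ell_1^m$'s $\lambda$-uniformly for some $\lambda\geqslant 1$: fix subspaces $E_m\subseteq F$ with $d(E_m,\ell_1^m)\leqslant\lambda$. Assume toward a contradiction that $F$ were locally $\pi$-Euclidean with constant $C$. Fixing $n$ and taking $m:=N(n,1)$, the subspace $E_m$ (of dimension $m\geqslant N(n,1)$) would contain an $n$-dimensional $G$ with $d(G,\ell_2^n)\leqslant 2$ together with a projection $P\colon F\to G$ of norm $\leqslant C$. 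Restricting $P$ to $E_m$ (this is still a projection onto $G$) and conjugating by an isomorphism $\ell_1^m\to E_m$ realizing $d(E_m,\ell_1^m)\leqslant\lambda$ exhibits $\operatorname{id}_{\ell_2^n}$ as a factorization $\ell_2^n\xrightarrow{\,b_n\,}\ell_1^m\xrightarrow{\,a_n\,}\ell_2^n$ with $\|a_n\|\,\|b_n\|\leqslant 2C\lambda$. Now $\ell_1^m$ is, uniformly in $m$, a G.T.\ space --- being a norm-one complemented subspace of an $L_1$-space, to which Grothendieck's theorem applies --- so $a_n$ is $1$-summing with $\pi_1(a_n)\leqslant K_G\|a_n\|$; by the ideal property of $\Pi_1$ this forces $\pi_1(\operatorname{id}_{\ell_2^n})\leqslant\pi_1(a_n)\|b_n\|\leqslant 2CK_G\lambda$ for \emph{every} $n$, contradicting $\pi_1(\operatorname{id}_{\ell_2^n})\geqslant\pi_2(\operatorname{id}_{\ell_2^n})=\sqrt n$. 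Hence $F$ is $K$-convex.

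\textbf{$K$-convex $\Rightarrow$ locally $\pi$-Euclidean.} Assume $F$ is $K$-convex, fix $n\geqslant 1$ and $\epsilon\in(0,1]$, and let $N=N(n,\epsilon)$ be the dimension furnished by Dvoretzky's theorem. For any $E\subseteq F$ with $\dim E\geqslant N$, Dvoretzky's theorem supplies an $n$-dimensional $G\subseteq E$ with $d(G,\ell_2^n)\leqslant 1+\epsilon$; what remains is to complement $G$ \emph{inside $F$} by a projection of norm $\leqslant C$ with $C=C(K(F))$ --- this is the complemented form of Dvoretzky's theorem for $K$-convex spaces, due to Figiel and Tomczak-Jaegermann. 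The route I would take: following Milman's proof of Dvoretzky's theorem, realize $G$ as the range of an operator $u\colon\ell_2^n\to F$ arising from a Euclidean structure of $E$ in $\ell$-position, and take as candidate projection $P=u\circ v$, where $v\colon F\to\ell_2^n$ is the Gaussian recovery map reconstructing the coordinates of $u$ from the $F$-valued Gaussian vector $\sum_i g_i\,u(e_i)$. One then checks that $\|v\|_{F\to\ell_2^n}\leqslant c\,(1+\epsilon)\,K(F)$, using the hypothesis in the form $\|R_n\|_{L_2(G_n;F)\to L_2(G_n;F)}\leqslant K(F)$ (boundedness of the Rademacher, equivalently Gaussian, projection) together with the $\ell$-position, which absorbs the Banach--Mazur distance $d(E,\ell_2^{\dim E})$. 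A standard measure-concentration and union-bound argument shows that both the almost-isometry conclusion of Dvoretzky and the estimate for $\|P\|$ hold for a positive-measure set of $n$-dimensional subspaces $G$, so a single $G$ with both properties may be fixed; then $C:=2cK(F)$ works for all $n$ and all $\epsilon\leqslant 1$, and $F$ is locally $\pi$-Euclidean.

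\textbf{Main obstacle.} The delicate point is the complementation estimate $\|v\|\leqslant c(1+\epsilon)K(F)$ in the second implication. A naive $v$ built from Hahn--Banach extensions of the biorthogonal functionals of $G$ carries no control on its norm; $K$-convexity must be used essentially --- through the boundedness of $R_n$ and the choice of the $\ell$-position --- to arrange the extensions so that $v$ stays bounded by a constant multiple of $K(F)$. The remaining ingredients (Dvoretzky's theorem, the conjugation/restriction arguments, and the G.T.\ and $\Pi_1$-ideal inputs of the first implication) are routine.
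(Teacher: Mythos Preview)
The paper does not prove this statement at all: Theorem~\ref{pieucli} is simply quoted from \cite[Theorem 5.10]{Pis85} and used as a black box (in the proof of Corollary~\ref{cor4.6}). There is therefore no ``paper's own proof'' against which to compare your proposal.

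That said, your sketch is a reasonable outline of how the result is proved in the original source. The contrapositive direction is clean and correct: uniformly complemented copies of $\ell_2^n$ inside $\ell_1^m$ would force $\pi_1(\operatorname{id}_{\ell_2^n})$ to be bounded, contradicting $\pi_1(\operatorname{id}_{\ell_2^n})\geqslant\sqrt{n}$. For the harder direction you correctly identify the Figiel--Tomczak-Jaegermann complemented Dvoretzky theorem as the engine and correctly isolate the main obstacle (controlling the recovery map $v$ via the boundedness of the Rademacher/Gaussian projection and the $\ell$-position). Your description of that step is a roadmap rather than a proof, but that is appropriate here since the full argument belongs to \cite{Pis85} and is not reproduced in the present paper.
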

We need the following remarkable `$\ell_1/ \ell_2/ \ell_\infty$' trichotomy theorem.
\begin{theorem}[Theorem 20, \cite{XOR games and GT}]\label{trichotomy} Suppose that $E$ is a Banach space of dimension $n$. Then, for every $1\leqslant A\leqslant\sqrt{n}$, there exist universal constants $c>0$ and $C> 0$, such that one of the following is true:
\begin{itemize}
    \item[(i)] $f(\ell_\infty^{c\sqrt{n}}, E)\leqslant CA\sqrt{\log n}$;
    \item[(ii)]$f(\ell_1^{c\sqrt{n}}, E)\leqslant CA\sqrt{\log n}$; 
    \item[(iii)] $f(\ell_2^{\frac{cA^2}{\log{n}}}, E)\leqslant C\ {\log n}.$
\end{itemize}
\end{theorem}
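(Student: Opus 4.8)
The plan is to reduce the trichotomy to a case analysis governed by three local invariants of the $n$-dimensional space $E$: its Rademacher cotype-$2$ constant $C_2(E)$, the cotype-$2$ constant $C_2(E^*)$ of its dual, and its $K$-convexity constant $K(E)$. The three alternatives should correspond, respectively, to (i) $C_2(E)$ large, (ii) $C_2(E^*)$ large, and (iii) both cotype-$2$ constants small, so that after controlling $K(E)$ the space $E$ is close to Euclidean on a large, well-complemented subspace. Two elementary reductions streamline the extractions. First, $\ell_\infty^k$ is $1$-injective, so if $E$ has a subspace $\mu$-isomorphic to $\ell_\infty^k$ then, extending the inverse isomorphism coordinatewise by Hahn--Banach, one obtains $f(\ell_\infty^k,E)\leqslant\mu$; this is how I would produce alternative (i). Second, in finite dimensions $f(\ell_1^k,E)=f(\ell_\infty^k,E^*)$, so alternative (ii) reduces to finding a subspace of $E^*$ that is $\mu$-isomorphic to $\ell_\infty^k$ with $\mu\leqslant CA\sqrt{\log n}$ and $k\geqslant c\sqrt n$.

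For alternative (iii), I would assume $C_2(E)$ and $C_2(E^*)$ are both below a threshold calibrated to $A$ and $\log n$. Because $E$ is finite-dimensional, and hence has the approximation property, controlling both cotype-$2$ constants should force $K(E)$ to be controlled as well, with only a polylogarithmic loss --- a quantitative, finite-dimensional statement in the spirit of Conjecture \ref{Pisann}, which (unlike the conjecture itself) ought to be within reach precisely because the approximation property is automatic and one is allowed logarithmic slack. Granting this, Pisier's duality estimate bounds $T_2(E)$ by a constant multiple of $K(E)\,C_2(E^*)$, and Kwapie\'n's theorem bounds $d(E,\ell_2^n)$ by $T_2(E)\,C_2(E)$; hence $f(\ell_2^m,E)\leqslant d(E,\ell_2^n)\leqslant C\log n$ for every $m\leqslant n$ once $A$ is not too large. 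For the remaining range of $A$, where this global estimate is too weak, I would instead build the block directly: Dvoretzky's theorem provides a $(1+\varepsilon)$-Euclidean subspace, and the local-$\pi$-Euclidean property of $K$-convex spaces (Theorem \ref{pieucli}) complements it with norm depending only on $K(E)$, yielding a factorization of $\ell_2^{\lceil cA^2/\log n\rceil}$ through $E$ of norm $\leqslant C\log n$.

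For alternatives (i) and (ii) the engine is an effective, finite-dimensional form of the Maurey--Pisier theorem on containment of $\ell_\infty^n$'s (Theorem \ref{Maurey-Pisier} being its qualitative shadow): if $C_2(E)$ exceeds the threshold, then $E$ contains a copy of $\ell_\infty^k$ with $k\geqslant c\sqrt n$ at distortion $\leqslant CA\sqrt{\log n}$, whence $f(\ell_\infty^k,E)\leqslant CA\sqrt{\log n}$ by the injectivity remark; if instead $C_2(E^*)$ exceeds the threshold, the same extraction applied to $E^*$ combined with the duality identity for $f$ gives alternative (ii). The parameter $A$ is exactly the knob trading the size $k\asymp\sqrt n$ of the extracted cube against the distortion $\asymp A\sqrt{\log n}$ one tolerates, and it also decides whether alternative (iii) can deliver a Euclidean block of the requested dimension $cA^2/\log n$ or whether one is instead pushed into (i) or (ii).

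The hard part will be the quantitative extraction just described: obtaining $\ell_\infty^k$ with $k$ as large as $c\sqrt n$ --- rather than the merely logarithmic dimension a crude argument would give --- while keeping the distortion $O(A\sqrt{\log n})$, all with constants independent of $n$. I expect this to require a random selection: a Bernoulli or Gaussian random subset of a John-type basis, with the injective norm of the associated random tensor controlled by Chevet's inequality (Theorem \ref{chevet}), sharp concentration of Talagrand type to pass from the average to a single good configuration, and a restricted-invertibility step in the spirit of Bourgain--Tzafriri to ensure the selected block is simultaneously $\ell_\infty^k$-like and well-positioned in $E$. A secondary difficulty is making the finite-dimensional ``cotype controls $K$-convexity'' passage of alternative (iii) effective with only polylogarithmic loss; here one leans on John's theorem, which bounds every invariant in sight by $\sqrt n$, so that the logarithmic room built into the statement absorbs the inefficiency.
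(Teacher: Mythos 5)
You should first be aware that the paper does not prove this statement at all: it is quoted verbatim as Theorem 20 of \cite{XOR games and GT} and used as a black box in the proof of Theorem \ref{mainthm}. So there is no in-paper argument to compare against, and your text has to be judged as a self-contained proof of a deep external theorem. At the level of architecture your plan does resemble the known proof strategy: a dichotomy on the cotype-$2$ constants of $E$ and $E^*$, the injectivity of $\ell_\infty^k$ to upgrade a well-embedded cube to a factorization $f(\ell_\infty^k,E)\leqslant\mu$, the duality $f(\ell_1^k,E)=f(\ell_\infty^k,E^*)$ in finite dimensions, and a $K$-convexity/Kwapie\'n-type argument for the Euclidean branch; those elementary reductions are correct. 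But what you have written is a plan, not a proof: the entire content of the theorem sits in the two quantitative ingredients that you explicitly defer.

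Concretely: (a) the extraction of $\ell_\infty^k$ with $k\geqslant c\sqrt n$ at distortion $CA\sqrt{\log n}$ from a large cotype-$2$ constant is precisely an Alon--Milman/Talagrand-type theorem, and proposing to reprove it by ``Chevet plus concentration plus restricted invertibility'' is a hope rather than an argument; moreover you never calibrate the cotype threshold in terms of $A$, so it is not explained why failure of (i) and (ii) simultaneously yields cubes of side $\sqrt n$ in one regime and cotype constants small enough for branch (iii) in the other, nor why the Euclidean dimension comes out as $cA^2/\log n$. (b) The Euclidean branch as described cannot deliver the stated dimension: Theorem \ref{pieucli} is purely qualitative (no estimate on $N(n,\epsilon)$), and plain Dvoretzky only guarantees almost-Euclidean sections of dimension of order $\log n$, which is far below $cA^2/\log n$ once $A$ is large; what is needed is a quantitative complemented-Euclidean-section estimate (Figiel--Tomczak-Jaegermann type, in the $\ell$-position) together with the unconditional bound $K(E)\leqslant C(1+\log d(E,\ell_2^n))\leqslant C\log n$. (c) Relatedly, resting part of branch (iii) on a ``quantitative finite-dimensional form of Conjecture \ref{Pisann}'' that you merely assert ``ought to be within reach'' is both unjustified and unnecessary, since the $K$-convexity constant is already controlled by $\log n$ without any cotype hypothesis. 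As it stands, the proposal is an outline whose hard steps are the theorem itself, so it cannot be accepted as a proof.
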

Let $M_n$ be the algebra of $n\times n$ complex matrices. 
For $1\leqslant p<\infty,$
define $\|A\|_{S_p^n}:=(\text{tr}(|A|^p))^{\frac{1}{p}}$. This makes $M_n$ a complex Banach space denoted by $S_p^n$. We denote $S_\infty^n$ to be $M_n$ equipped with the usual operator norm. Note that the space $S_p^{n,\text{sa}}$ of all self-adjoint elements of $S_p^n$ is a real Banach space equipped with the norm of $S_p^n.$ 
In what follows, $p,p'\in [1,\infty]$, are called conjugate to each other if $\frac{1}{p}+\frac{1}{p'}=1.$ The notation $p'$ shall always denote the conjugate of $p,$ unless stated otherwise.  
It is well known that $(S_p^n)^*$ is isometrically isomorphic to $S_{p'}^n.$
The duality relation is given by $\langle A, B\rangle=\operatorname{tr}(AB),$ where $A\in S_p^n$ and $B\in S_{p'}^n.$ 
An analogous result holds for $S_p^{n,\text{sa}}.$ 
We need the following non-commutative $L_p$-Grothendieck theorem.

We recall below, in the form of a theorem, Equation (0.2) of \cite{Xu06} that is stated for non-commutative $L_p(M)$ spaces over a von Neumann algebra $M$. However, since 
$L_p(M)=S_p$ with $M=B(\ell_2)$, we also have it for $S_p$. 

\begin{theorem}[pp. 527, \cite{Xu06}] \label{LpncG}
For any $1<p,q<\infty$ and any bounded bilinear form $B:S_{2p}\times S_{2q}\to \mathbb{C},$ there are positive unit functionals $\phi$ and $\psi$ on $S_p$ and $S_q$ respectively, 
such that 
\[|B(x,y)|\leqslant K\|B\|\Bigg(\phi\bigg(\frac{x^*x+xx^*}{2}\bigg)\Bigg)^{\frac{1}{2}}\Bigg(\psi\bigg(\frac{y^*y+yy^*}{2}\bigg)\Bigg)^{\frac{1}{2}},\ \forall\ x\in S_{2p},\ y\in S_{2q},\] 
for some constant $K$, which depends only on the cotype constants of $S_{2p}$, $S_{2p}^*$, $S_{2q}$, and $S_{2q}^*$.
\end{theorem}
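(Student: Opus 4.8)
The plan is to read this as the noncommutative Grothendieck inequality for the Schatten scale and to obtain it by interpolating between the two endpoints $S_{\infty}$ and $S_{2}$ of the family $\{S_{2p}\}_{1\leqslant p\leqslant\infty}$, where the statement is already known.

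First I would reduce to finite dimensions: replace $S_{2p},S_{2q}$ by $S_{2p}^{n},S_{2q}^{n}$ and $B$ by its compressions $B_{n}$, for which $\|B_{n}\|\leqslant\|B\|$; since the unit balls of $(S_{p}^{n})^{*}$ and $(S_{q}^{n})^{*}$ are weak-$*$ compact, once the inequality holds for every $n$ with a constant $K$ independent of $n$, a weak-$*$ cluster point of the pairs $(\phi_{n},\psi_{n})$ furnishes $(\phi,\psi)$ for $B$. Next I would record the reformulation: a positive unit functional on $S_{p}$ is exactly $\phi(a)=\operatorname{tr}(da)$ with $0\leqslant d\in S_{p'}$ and $\|d\|_{p'}=1$, and then $\phi(x^{*}x)=\|xd^{1/2}\|_{S_{2}}^{2}$, $\phi(xx^{*})=\|d^{1/2}x\|_{S_{2}}^{2}$ by cyclicity of the trace; so the assertion is precisely that $B$ factors through a row-plus-column Hilbert space, $B(x,y)=\langle\alpha(x),\beta(y)\rangle$, with $\|\alpha(x)\|\leqslant(K\|B\|)^{1/2}\big(\tfrac12\|xd^{1/2}\|_{2}^{2}+\tfrac12\|d^{1/2}x\|_{2}^{2}\big)^{1/2}$ and symmetrically for $\beta$ and some density $e$.

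The core step is to prove this factorization by complex interpolation along $S_{2p}=[S_{\infty},S_{2}]_{1/p}$, and likewise in the second variable. At $p=q=1$ the two spaces are Hilbert spaces and the inequality is trivial, with $K=1$ and $d,e$ the (truncated) trace densities; at $p=q=\infty$ it is the Pisier--Haagerup noncommutative Grothendieck inequality for $B(\ell_{2})$, which supplies states $\phi_{0},\psi_{0}$. I would then build an analytic family $B_{z}$ of bilinear forms interpolating $B$ between these endpoints — since the two variables are independent, a two-parameter Calder\'on family handles $p\neq q$ — apply the endpoint results on the boundary, and interpolate the boundary functionals to a positive-operator-valued analytic function whose value at the relevant parameter is the desired density, the column and row terms $\|xd^{1/2}\|_{2}$, $\|d^{1/2}x\|_{2}$ being controlled by Stein--Weiss / Hadamard three-lines estimates. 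The cotype constants of $S_{2p},S_{2p}^{*},S_{2q},S_{2q}^{*}$ enter as the quantitative input: $K$-convexity and finite cotype of all four spaces are exactly what make the interpolation of these vector-valued analytic functions quantitative and the constant independent of $n$.

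The hard part, and the feature with no commutative counterpart, will be the interpolation of the positive functionals themselves: the density operators arising at the two endpoints need not commute, so one cannot simply set $d=d_{0}^{1-\theta}d_{1}^{\theta}$, and the column term $\|xd^{1/2}\|_{2}$ and the row term $\|d^{1/2}x\|_{2}$ have to be controlled simultaneously. Carrying this out needs the theory of complex interpolation of noncommutative $L_{p}$-spaces with change of density (Kosaki-type results) together with careful bookkeeping of operator orderings, and this is where essentially all the work — and the dependence of $K$ on the cotype constants — is concentrated. An alternative, more self-contained route would avoid Haagerup's theorem and argue directly: combine the noncommutative Khinchin inequality in $S_{2p}$, whose constant is of the order of the cotype constant, with a noncommutative ``little Grothendieck'' theorem for maps out of a finite-cotype noncommutative $L_{p}$-space into a row-plus-column Hilbert space. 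This makes the role of the cotype constants transparent, but it still requires the same two-sided (column and row) square-function control.
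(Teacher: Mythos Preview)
The paper does not prove this theorem at all: it is quoted verbatim as Equation~(0.2) of \cite{Xu06}, specialized from noncommutative $L_p$-spaces to $S_p$, and is used purely as a black box in the proofs of Lemma~\ref{applyncLpGT} and Proposition~\ref{leqnclp}. There is nothing to compare your proposal against.

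That said, your sketch is broadly in the spirit of how such inequalities are obtained in \cite{Xu06}, namely via complex interpolation between a Hilbert-space endpoint and a $C^*$-algebra endpoint where the Haagerup/Pisier noncommutative Grothendieck theorem applies. You correctly flag the genuine difficulty: interpolating the \emph{states} (equivalently, the density operators) is the crux, because the endpoint densities need not commute and one must simultaneously control the row and column square functions. However, as it stands your proposal is a plan rather than a proof: the sentence ``interpolate the boundary functionals to a positive-operator-valued analytic function whose value at the relevant parameter is the desired density'' hides exactly the step that requires real work, and neither the three-lines argument nor a vague appeal to Kosaki interpolation makes it go through on its own. In Xu's actual argument the cotype hypothesis is used not merely as ``quantitative input for interpolation'' but to invoke a Maurey-type factorization/extrapolation for maps into noncommutative $L_p$, which is what produces the density. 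If you want a self-contained proof you should look at \cite{Xu06} directly; for the purposes of this paper the result is simply imported.
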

The paper \cite{Xu06} also has an extension of Theorem \ref{LpncG} for operator
spaces. 

\section{A number of preparatory lemmas}
In this section, we prove various results which will be useful for later sections. 
We start with the following useful lemma which gives an equivalent description of $K_G(E,F)$ for finite-dimensional Banach spaces $E$ and $F$, when $E_1=E_2=E$.

\begin{lemma}\label{Pisier-Nuclear}
Suppose that $E_i$, $i=1,2$ and $F$ are finite-dimensional Banach spaces. Then the following statements are equivalent.
\begin{itemize}
\item[1.] For all linear maps $A:E_1\to E_2^*$, there exists a constant $C>0$, independent of $A,$ such that
\begin{equation}\label{pis3}
\|A\otimes \operatorname{id}_{F}\|_{E_1\check{\otimes}F\to E_2^*\hat{\otimes}F}\leqslant C \|A\|_{\vee}.
\end{equation}
\item[2.] For all linear maps $A:E_1\to E_2^*$ and linear maps $B:E_1^*\to F,$ there exists a constant $C>0$, independent of $A$ and $B,$ such that 
\begin{eqnarray}\label{Nuclear}
N(BA^*)\leqslant C\|A\|_{\vee}\|B\|_{\vee}.
\end{eqnarray}
\end{itemize}
{\textit{Moreover, the best constants in both \eqref{pis3} and \eqref{Nuclear} are equal.}}
\end{lemma}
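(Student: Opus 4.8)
The plan is to prove the equivalence by unwinding the various tensor-norm and nuclear-norm dualities. Write $n = \dim E$ and $m = \dim F$. The key observation is that a linear map $A : E \to E^*$ is the same thing as an element of $E^* \otimes E^* = (E \otimes E)^*$, so that $A \otimes \operatorname{id}_F$ acts between the tensor products $E \check\otimes F$ and $E^* \hat\otimes F$; and the quantity $\|A \otimes \operatorname{id}_F\|_{E\check\otimes F \to E^*\hat\otimes F}$ is, by definition of the operator norm and the definition of the nuclear norm via the canonical isomorphism $J : E^*\hat\otimes F \to B(E,F)$, nothing but
\[
\sup\Bigl\{ \, \|(A\otimes\operatorname{id}_F)(u)\|_{E^*\hat\otimes F} \; : \; u \in E\check\otimes F,\ \|u\|_\vee \leqslant 1 \,\Bigr\}.
\]

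First I would set up the implication $2.\Rightarrow 1.$ Given $A$, an arbitrary element $u$ of $E\check\otimes F = B(E^*,F)$ with $\|u\|_\vee \le 1$ is a linear map $B : E^* \to F$ of norm $\le 1$; and I claim that $(A \otimes \operatorname{id}_F)(u)$, viewed in $E^* \hat\otimes F$, has nuclear norm exactly $N(BA)$ when transported to $B(E,F)$ via $J$. Indeed, chasing the identification $E^*\hat\otimes F \cong N(E,F)$: the element $u = \sum_j e^*_j \otimes f_j \in E \check\otimes F$ corresponds to $B = \sum_j \langle e^*_j, \cdot\rangle f_j : E^* \to F$, while $(A\otimes\operatorname{id}_F)(u) = \sum_j A(e^*_j)\otimes f_j$ (here I use $A : E \to E^*$ composed appropriately — one must be careful that $A$ in statement~1 is being applied on the first leg which lives in $E$, so the correct reading is that $u \in E \check\otimes F$, $A(e_j)\otimes f_j$, and $J$ sends this to the map $E \ni e \mapsto \sum_j \langle A(e_j), e \rangle \cdot$, wait — I will instead phrase the composition as $BA$ where $A$ acts first; the bookkeeping of which space is dual to which is the one genuinely fiddly point and must be written out carefully). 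The upshot is $\|(A\otimes\operatorname{id}_F)(u)\|_{E^*\hat\otimes F} = N(BA)$ with $\|B\|_\vee = \|u\|_\vee$, so taking suprema over $\|u\|_\vee \le 1$ gives $\|A\otimes\operatorname{id}_F\|_{E\check\otimes F \to E^*\hat\otimes F} = \sup\{N(BA) : \|B\|_\vee \le 1\}$. Then \eqref{Nuclear} applied with this $B$ yields \eqref{pis3} with the same constant $C$.

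For $1.\Rightarrow 2.$ I would simply reverse the above: given $A$ and $B : E^* \to F$ with $\|B\|_\vee \le 1$ (normalizing $B$ by homogeneity), realize $B$ as an element $u_B \in E\check\otimes F$ with $\|u_B\|_\vee = \|B\|_\vee \le 1$, observe $N(BA) = \|(A\otimes\operatorname{id}_F)(u_B)\|_{E^*\hat\otimes F} \le \|A\otimes\operatorname{id}_F\|_{E\check\otimes F\to E^*\hat\otimes F}\,\|u_B\|_\vee$, and invoke \eqref{pis3}. Since each direction transfers the constant without loss, the best constants coincide, which is the final assertion. The main obstacle — really the only one — is the correct identification of the maps and the duality pairings: one must verify that the injective norm on $E\check\otimes F$ matches the operator norm of the associated map $E^* \to F$ (this is exactly the identification $E\otimes F \subseteq B(E^*,F)$ recalled in the Preliminaries), and that the projective norm on $E^*\hat\otimes F$ matches the nuclear norm on $B(E,F)$ under $J$ (also recalled there); everything else is a one-line application of the definition of operator norm. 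I expect no deep input is needed — the lemma is a bookkeeping reformulation — but the indices and dualities should be tracked with care, perhaps by first doing the case $F = \ell_2^m$ or by choosing bases.
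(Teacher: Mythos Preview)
Your proposal is correct and follows essentially the same route as the paper: the paper also reduces everything to the identifications $E\check\otimes F \cong B(E^*,F)$ (injective norm $=$ operator norm) and $E^*\hat\otimes F \cong N(E,F)$ (projective norm $=$ nuclear norm), carrying this out concretely by fixing bases of $E$ and $F$. The one ``fiddly point'' you flag is real---the paper's basis computation actually yields $N(BA^{t})$ rather than $N(BA)$---but since $\|A\|_\vee = \|A^{t}\|_\vee$ this is harmless for the equality of best constants, exactly as the paper notes.
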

\begin{proof}
Fix bases as $(e_i^1)_{i=1}^{\text{dim}E_1}$ of $E_1$, $(e_i^2)_{i=1}^{\text{dim}E_2}$ of $E_2$ and $(f_j)_{j=1}^{\text{dim}F}$ of $F.$ Let the dual bases be $({e_i^1}^*)_{i=1}^{\text{dim}E_1}$ and $({e_i^2}^*)_{i=1}^{\text{dim}E_2}$ for the dual space $E_1^*$ and $E_2^*$ respectively.
Let us consider \[\sum_{i=1}^{\text{dim}E_1}\sum_{j=1}^{\text{dim}F}b_{ij}e_i^1\otimes f_j \] to be an arbitrary element in $E_1\otimes F$  and let $A\in E_1^*\check{\otimes} E_2^*.$ Suppose $A$ is represented by $Ae_i^1=\sum_{k=1}^{\text{dim}E_2}a_{ki}{e_{k}^2}^*.$ Note that 
\[(A\otimes \text{id}_{F})\Big(\sum_{i=1}^{\text{dim}E_1}\sum_{j=1}^{\text{dim}F}b_{ij}e_i^1\otimes f_j \Big)=\sum_{i=1}^{\text{dim}E_1}\sum_{j=1}^{\text{dim}F}b_{ij}A(e_i^1)\otimes f_j=\sum_{i=1}^{\text{dim}E_1}\sum_{j=1}^{\text{dim}F} b_{ij}\sum_{k=1}^{\text{dim}E_2}a_{ki}{e_
{k}^2}^*\otimes f_j.\]
Hence
\begin{eqnarray*}
\Big\|(A\otimes \text{id}_{F})\Big(\sum_{i=1}^{\text{dim}E_1}\sum_{j=1}^{\text{dim}F}b_{ij}e_i^1\otimes f_j\Big)\Big\|_{E_2^*\hat{\otimes}F}= N(BA^*),
\end{eqnarray*}
where $A^*$ denotes the dual of $A$. Thus \eqref{pis3} is equivalent to 
\begin{eqnarray*}
N(BA^*)\leqslant C \|A^*\|_\vee \|B\|_\vee= C \|A\|_\vee \|B\|_\vee. 
\end{eqnarray*}
This proves the equivalence of \eqref{pis3} and \eqref{Nuclear}.
\end{proof}

\begin{remark}\label{gttheo}
Let $E$ and $F$ be finite-dimensional Banach spaces. Since $\pi_1$ is a cross norm and projective norm is the largest cross norm, in view of Lemma \ref{Pisier-Nuclear}, it follows that  for any pair of maps $A:E\to E^*$ and $B:E^*\to F$ we have 
\begin{equation}\label{pisier2}
 \pi_1(BA)\leqslant K_G(E,F)\|A\| \|B\|.
\end{equation}
\end{remark}
In what follows, we shall also need the following lemma. The proof is essentially same as the proof in \cite[Proposition 2.1(b)]{AFHS95}. We provide a proof for the sake of completeness.       \begin{lemma}\label{Norm of inclusion of pi-2(E,H)}
        For any finite-dimensional Banach space $E$ and any Hilbert space $\mathcal H,$ 
    \[\|\text{id}\|_{\Pi_2(E^*,\mathcal H)\to B(E^*,\mathcal H)}=\|\text{id}\|_{\Pi_2(E,\mathcal H)\to B(E,\mathcal H)}.\]
    \end{lemma}
    \begin{proof}
        It is enough to show that $\|\text{id}\|_{\Pi_2(E,\mathcal H)\to B(E,\mathcal H)} \leqslant \|\text{id}\|_{\Pi_2(E^*,\mathcal H)\to B(E^*,\mathcal H)}=K$ (say). To get this, let $T:E\to\mathcal H$ be a bounded linear operator.  
 Recall from \cite[Chapter 1]{Pisier-factorization} that $\pi_2(T)$ can be also expressed as 
\begin{equation}\label{formufrpi}
    \pi_2(T)=\sup\big\{\pi_2(Tu):\|u\|_{\ell_2^k\to E}\leqslant 1, k\in\mathbb{N}\big\}.
    \end{equation}
Also, note that \cite[Proposition 1.9]{Pisier-factorization} shows that $\pi_2(Tu)=\|Tu\|_{\text{HS}}$, where $\|
\cdot\|_{\text{HS}}$ is the usual Hilbert-Schmidt norm. Hence we get that 
\begin{equation}\label{formpi2}
\pi_2(Tu)=\pi_2(u^*T^*)\leqslant \|T^*\|\pi_2(u^*)\leqslant K\|T^*\|\|u^*\|=K\|T\|\|u\|.
\end{equation} 
This proves that $\pi_2(T)\leqslant K \|T\|$ and completes the proof of the lemma. 
    \end{proof}
\begin{lemma} \label{basic-properties-KG}
Let $E$ and $F$ be Banach spaces. Assume that $E$ is finite-dimensional. Then the following are true:
	\begin{itemize}
		\item[(i)] $K_G(E,F^*)= K_G(E,F).$
		\item[(ii)] $K_G(E,F)\leqslant \min\{\rho(E,F),\, \rho(E,F^*)\}.$
		\item[(iii)]  $K_G(\ell_2^n,F)=\rho(\ell_2^n,F).$
		\end{itemize}
\end{lemma}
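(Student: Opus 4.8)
The plan is to prove the three items in order, using Lemma \ref{Pisier-Nuclear} and the duality between injective and projective tensor norms for finite-dimensional spaces throughout.

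For item $(i)$, I would use the isometric identification $(E\check\otimes F)^* \cong E^*\hat\otimes F^*$ and $(E\hat\otimes F)^* \cong E^*\check\otimes F^*$ valid because $E$ is finite dimensional. Given $A:E\to E^*$, the map $A\otimes\operatorname{id}_{F^*}: E\check\otimes F^* \to E^*\hat\otimes F^*$ should be, up to the natural identifications, essentially the adjoint (or transpose) of $A^t\otimes\operatorname{id}_F: E\check\otimes F \to E^*\hat\otimes F$, since taking adjoints swaps injective and projective norms and swaps $F$ with $F^*$. Because $\|A\|_{E\to E^*} = \|A^t\|_{E\to E^*}$ (the transpose of a map $E\to E^*$ is again a map $E\to E^*$ with the same norm, using reflexivity of the finite-dimensional $E$), taking the supremum over the unit ball gives $K_G(E,F^*) = K_G(E,F)$. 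The routine check is that the adjoint identification is compatible with the tensor factorization; this is bookkeeping with the pairing $\langle e\otimes f, e^*\otimes f^*\rangle = e^*(e)f^*(f)$.

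For item $(ii)$, I would argue $K_G(E,F)\leqslant \rho(E,F)$ and then combine with $(i)$ to also get $\leqslant \rho(E,F^*)$. Write the map $A\otimes\operatorname{id}_F$ as a composition $E\check\otimes F \xrightarrow{A\otimes\operatorname{id}_F} E^*\check\otimes F \xrightarrow{\operatorname{id}_{E^*}\otimes\operatorname{id}_F} E^*\hat\otimes F$. Here $A$ has injective-norm operator norm $\|A\|_{E\to E^*}\leqslant 1$, so $A\otimes\operatorname{id}_F : E\check\otimes F\to E^*\check\otimes F$ has norm at most $1$ (the injective norm is, by definition, the one making such maps bounded with the obvious bound — this is the standard metric mapping property of $\check\otimes$). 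The second factor has norm exactly $\rho(E^*,F)$ by definition of $\rho$. Now I need $\rho(E^*,F) \leqslant \rho(E,F)$ — actually I should be careful here: it might be cleaner to route the other way, writing $A\otimes\operatorname{id}_F = (\operatorname{id}\otimes\operatorname{id})\circ(A\otimes\operatorname{id})$ where the comparison map is $E\check\otimes F\to E\hat\otimes F$ (norm $\rho(E,F)$) followed by $A\otimes\operatorname{id}: E\hat\otimes F\to E^*\hat\otimes F$ (norm $\leqslant \|A\|\leqslant 1$, by the metric mapping property of $\hat\otimes$). That gives $K_G(E,F)\leqslant\rho(E,F)$ directly, and then $(i)$ yields $K_G(E,F)=K_G(E,F^*)\leqslant\rho(E,F^*)$, so $K_G(E,F)\leqslant\min\{\rho(E,F),\rho(E,F^*)\}$.

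For item $(iii)$, the inequality $K_G(\ell_2^n,F)\leqslant\rho(\ell_2^n,F)$ is immediate from $(ii)$. For the reverse, I would exploit that $\ell_2^n$ is isometrically self-dual: the identity $\operatorname{id}:\ell_2^n\to(\ell_2^n)^*$ (under the standard identification) is an isometry, hence a legitimate choice of $A$ with $\|A\|_{\ell_2^n\to(\ell_2^n)^*} = 1$, and with $A=\operatorname{id}$ the operator $A\otimes\operatorname{id}_F$ is exactly the comparison map $\ell_2^n\check\otimes F\to(\ell_2^n)^*\hat\otimes F$, whose norm — again after the isometric identification $(\ell_2^n)^*\cong\ell_2^n$ — equals $\rho(\ell_2^n,F)$. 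Therefore $K_G(\ell_2^n,F)\geqslant\rho(\ell_2^n,F)$, giving equality. The main obstacle, such as it is, is getting the several self-duality and adjoint identifications precisely aligned with the tensor-norm dualities (who is $E$, who is $E^*$, which slot carries $F$ versus $F^*$) so that the compositions type-check; once the correct routing is chosen, each estimate is just the metric mapping property of $\check\otimes$ or $\hat\otimes$ together with the definition of $\rho$.
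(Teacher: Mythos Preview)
Your proposal is correct and follows essentially the same approach as the paper: part (i) via the duality $(E\check\otimes F)^*\cong E^*\hat\otimes F^*$ identifying $A\otimes\operatorname{id}_F$ with the adjoint $A^*\otimes\operatorname{id}_{F^*}$, part (ii) via the factorization through $E\hat\otimes F$ using the metric mapping property of $\hat\otimes$ (your second, corrected routing is exactly the paper's), and part (iii) by testing $A=\operatorname{id}_{\ell_2^n}$. The only cosmetic difference is that the paper writes $A^*$ where you write $A^t$, and it states the adjoint identity in one line without unpacking the pairings.
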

\begin{proof} Let $A:E\to E^*$ be a linear map.
	\begin{itemize}
		\item[(i)] From the duality of projective and injective norm, we know that 
		\begin{eqnarray*}
		\|A\otimes \text{id}_F\|_{E\check{\otimes}F\to E^*\hat{\otimes}F} = \|A^*\otimes \text{id}_{F^*}\|_{E\check{\otimes}F^*\to E^*\hat{\otimes}F^*}
		\end{eqnarray*}
Since $\|A\|=\|A^*\|,$ it follows that $K_G(E,F)=K_G(E,F^*).$
		
\item[(ii)] Note that $A\otimes \text{id}_F=(A\otimes\text{id}_F)(\text{id}_{E}\otimes\text{id}_F).$ Therefore, we have that 
\begin{eqnarray*}
\|(A\otimes \text{id}_F)\|_{E\check{\otimes}F\to E^*\hat{\otimes}F} &=& \|(A\otimes \text{id}_F)\circ (\text{id}_E\otimes\text{id}_F)\|_{E\check{\otimes}F\to E^*\hat{\otimes}F}\\
&\leqslant & \|A\otimes\text{id}_F\|_{E\hat{\otimes}F\to E^*\hat{\otimes}F}\|\text{id}_{E}\otimes\text{id}_F\|_{E\check{\otimes}F \to E\hat{\otimes}F}.
\end{eqnarray*}
Since $ \|A\otimes\text{id}_F\|_{E\hat{\otimes}F\to E^*\hat{\otimes}F}=\|A\|,$ we have 
\[\|A\otimes\text{id}_F\|_{E\hat{\otimes}F\to E^*\hat{\otimes}F}\leqslant \rho(E,F) \|A\|.\] Therefore, $K_G(E,F)\leqslant \rho(E,F).$ The result follows from part (i).
\item[(iii)] Let us choose $A=\text{id}_{\ell_2^n}.$ From the definition of $K_G(\ell_2^n,F)$,  we get that 
\[K_G(\ell_2^n,F)\geqslant \|\text{id}_{\ell_2^n}\otimes\text{id}_F\|_{\ell_2^n\check{\otimes}F\to \ell_2^n\hat{\otimes}F}=\rho(\ell_2^n,F).\]
\end{itemize}
The  inequality on the other side  follows from (ii). 
 This completes the proof of the lemma.
\end{proof}

\begin{lemma}\label{KG and Subspaces}
Let $E$ and $F$ be Banach spaces. Let $X$ and $Y$ be another pair of Banach spaces. Assume that $E$ and $X$ are finite-dimensional 
and that $f(Y,F)$ and $f(Y,F^*)$ exist. 
Then the following are true:
\begin{itemize}
 \item[(i)] 
$K_G(E,Y)\leqslant \min\{f(Y,F), f(Y,F^*)\}K_G(E,F).$ Moreover,
\[K_G^{+}(E,Y)\leqslant \min\{f(Y,F), f(Y,F^*)\}K_G^{+}(E,F).\]
 \item[(ii)] 
 if $\dim X\leqslant\dim E$ then 
 $K_G(X,F)\leqslant {f}(X,E)^2 K_G(E,F).$ Moreover,  \[K_G^{+}(X,F)\leqslant {f}(X,E)^2 K_G^{+}(E,F).\]
\end{itemize}
\end{lemma}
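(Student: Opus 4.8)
The plan is to build everything on the metric mapping property of the two tensor norms: for bounded operators $S\colon X_1\to X_2$ and $T\colon Y_1\to Y_2$, the operator $S\otimes T$ extends to $X_1\check{\otimes}Y_1\to X_2\check{\otimes}Y_2$ and to $X_1\hat{\otimes}Y_1\to X_2\hat{\otimes}Y_2$, in both cases with norm at most $\|S\|\,\|T\|$. Together with the defining inequality $\|A\otimes\operatorname{id}_F\|_{E\check{\otimes}F\to E^*\hat{\otimes}F}\leqslant K_G(E,F)\|A\|$ (valid for every $A\colon E\to E^*$ by homogeneity, and with $K_G^+$ in place of $K_G$ when $A\geqslant 0$), each assertion will reduce to exhibiting a three-term factorization of the operator whose norm we must control.

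For part (i), I would fix $A\colon E\to E^*$ and a factorization $\operatorname{id}_Y=vu$ with $u\colon Y\to F$ and $v\colon F\to Y$. Checking on elementary tensors shows
\[
A\otimes\operatorname{id}_Y=(\operatorname{id}_{E^*}\otimes v)\circ(A\otimes\operatorname{id}_F)\circ(\operatorname{id}_E\otimes u),
\]
a composition $E\check{\otimes}Y\to E\check{\otimes}F\to E^*\hat{\otimes}F\to E^*\hat{\otimes}Y$. Estimating the outer maps by $\|u\|$ and $\|v\|$, the middle one by $K_G(E,F)\|A\|$, and taking the infimum over factorizations gives $K_G(E,Y)\leqslant f(Y,F)K_G(E,F)$; repeating with $F^*$ in place of $F$ and using $K_G(E,F^*)=K_G(E,F)$ from Lemma \ref{basic-properties-KG}(i), then taking the minimum, finishes this half. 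Since the middle factor is $A\otimes\operatorname{id}_F$ for the \emph{same} $A$, positivity of $A$ is never disturbed, so the verbatim argument with $K_G^+$ proves the positive statement; for the $f(Y,F^*)$ term one uses $K_G^+(E,F^*)=K_G^+(E,F)$, which is immediate from $A^*=A$ for non-negative $A$ together with the proof of Lemma \ref{basic-properties-KG}(i).

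For part (ii), I would fix $A\colon X\to X^*$ and a factorization $\operatorname{id}_X=vu$ with $u\colon X\to E$ and $v\colon E\to X$ (there is nothing to prove if no such factorization exists). Set $B:=v^*Av\colon E\to E^*$, so that $\|B\|\leqslant\|v\|^2\|A\|$, and observe that $B\geqslant 0$ whenever $A\geqslant 0$, since $\langle Be,e\rangle=\langle A(ve),ve\rangle\geqslant 0$ for all $e\in E$. From $u^*v^*=(vu)^*=\operatorname{id}_{X^*}$ one gets $A=u^*Bu$, hence
\[
A\otimes\operatorname{id}_F=(u^*\otimes\operatorname{id}_F)\circ(B\otimes\operatorname{id}_F)\circ(u\otimes\operatorname{id}_F),
\]
a composition $X\check{\otimes}F\to E\check{\otimes}F\to E^*\hat{\otimes}F\to X^*\hat{\otimes}F$. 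Bounding the outer maps by $\|u\|=\|u^*\|$ and the middle one by $K_G(E,F)\|B\|\leqslant\|v\|^2\|A\|K_G(E,F)$, and taking the infimum over factorizations, gives $K_G(X,F)\leqslant f(X,E)^2K_G(E,F)$; the same computation with $K_G^+$ goes through because $B$ inherits non-negativity from $A$.

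This is mostly bookkeeping once the factorizations are in hand; the only points needing attention are placing the injective versus projective norms correctly (the switch is forced precisely at the $A\otimes\operatorname{id}_F$ step), verifying that $B=v^*Av$ stays non-negative in part (ii), and recording the $K_G^+$-analogue of Lemma \ref{basic-properties-KG}(i) used for the $f(\cdot,F^*)$ half of part (i). I do not expect a genuine obstacle here.
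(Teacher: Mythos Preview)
Your proof is correct and follows essentially the same route as the paper: the same three-term factorizations $(\operatorname{id}_{E^*}\otimes v)\circ(A\otimes\operatorname{id}_F)\circ(\operatorname{id}_E\otimes u)$ for part~(i) and $(u^*\otimes\operatorname{id}_F)\circ(v^*Av\otimes\operatorname{id}_F)\circ(u\otimes\operatorname{id}_F)$ for part~(ii), bounded via the metric mapping property and then optimized over factorizations of the identity. You actually spell out two points the paper leaves implicit---that $v^*Av\geqslant 0$ when $A\geqslant 0$, and that $K_G^+(E,F^*)=K_G^+(E,F)$ because a non-negative $A$ satisfies $A^*=A$---so your write-up is, if anything, slightly more complete.
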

\begin{proof}
 (i)~Suppose $A:E\to E^*$ is a contraction. Let $(u,v)$ be a pair of operators such that $vu=\text{id}_{Y}$ with $u:Y\to F$ and $v:F\to Y.$ Note that \[A\otimes \text{id}_{Y}=A\otimes vu=(\text{id}_{E^*}\otimes v)\circ(A\otimes\text{id}_F)\circ (\text{id}_E\otimes u).\] Therefore, we have that
 \begin{eqnarray*}\|A\otimes\text{id}_{Y}\|_{E\check{\otimes}Y\to E^*\hat{\otimes}Y}
&\leqslant &\|\text{id}_{E^*}\otimes v\|_{E^*\hat{\otimes}F\to E^*\hat{\otimes}Y}
\|A\otimes\text{id}_F\|_{E\check{\otimes}F\to E^*\hat{\otimes}F}\|\text{id}_{E}\otimes u\|_{E\check{\otimes}Y\to E\check{\otimes}F}\\
&\leqslant& \|v\|\|u\|K_G(E,F).
\end{eqnarray*}
 Now taking infimum over all pair $(u,v)$ such that $vu=\text{id}_Y$ in the above computation, we get  $K_G(E,Y)\leqslant f(Y,F)K_G(E,F).$ By Lemma \ref{basic-properties-KG}, we have $K_G(E,F^*)=K_G(E,F),$ and hence
 \begin{eqnarray*}
 K_G(E,Y)\leqslant \min\{f(Y,F),f(Y,F^*)\} K_G(E,F)
 \end{eqnarray*}
 The proof of the `positive' case is the same as the above.

(ii) Suppose $A:X\to X^*$ is a contraction. 
Choose pair of operators $(u,v)$ such that $vu=\text{id}_{X}.$ Note that we have $A\otimes\text{id}_F=u^*v^*Avu\otimes\text{id}_F.$ Therefore, we have that
\[A\otimes \text{id}_F=(u^*\otimes\text{id}_F)\circ (v^*\otimes\text{id}_F)\circ (A\otimes\text{id}_F)\circ (v\otimes\text{id}_F)\circ (u\otimes\text{id}_F).\] 
Therefore, we obtain that 
\begin{equation*}
\begin{split}
&\|A\otimes\text{id}_F\|_{X\check{\otimes}F\to X^*\hat{\otimes}F}\\
&\phantom{Ohm}=\|(u^*\otimes\text{id}_F)\circ (v^*Av\otimes\text{id}_F)\circ (u\otimes\text{id}_F)\|_{X\check{\otimes}F\to X^*\hat{\otimes}F}\\
&\phantom{Ohm}\leqslant\|u\otimes \text{id}_F\|_{X\check{\otimes}F\to E\check{\otimes}F}\|v^*Av\otimes \text{id}_F\|_{E\check{\otimes}F\to E^*\hat{\otimes}F}\|u^*\otimes\text{id}_F\|_{E^*\hat{\otimes}F\to X^*\hat{\otimes}F}\\
&\phantom{Ohm}\leqslant\|u\|_{X\to E}^2K_G(E,F)\|v^*Av\|_{E\to E^*}\\
&\phantom{Ohm}\leqslant\|u\|_{E_1\to E}^2\|v\|_{E\to E_1}^2.
\end{split}
\end{equation*}
Taking the infimum over all admissible $(u,v)$, we obtain the required result.
 The proof of the `positive' case also follows.
This completes the proof of lemma.
\end{proof}

\section{Asymptotic behavior of $K_G(E,F)$}
It follows from Equation \eqref{msc} that $\left((\ell_\infty^n)_{n\geqslant 1},\ell_2\right)$ is an example of Grothendieck pair. Moreover, the question of characterizing $(E_n)_{n\geqslant 1}$ such that $\left((E_n)_{n\geqslant 1},\ell_2\right)$ is a Grothendieck pair is very similar to the notion of GT spaces which have been studied in the literature (see \cite{Pisier-factorization}).  In fact, we have the following proposition.

\begin{proposition}\label{originalgt} Let $\mathcal{E}=(E_n)_{n\geqslant 1}$ be a sequence of finite-dimensional Banach spaces with $\text{dim}\, E_n=n,$ $n\in\mathbb{N}.$
The following statements are equivalent. 
\begin{enumerate}
\item[(1)] $(\mathcal{E}, \ell_2)$ is a { Grothendieck pair}.
\item[(2)] There are positive constants $K_1$ and $K_2$ such that 
\begin{enumerate}
\item[(i)] for any Hilbert space $\mathcal H$ and for $B:E_n^*\to \mathcal{H}$,  
$\pi_1(B)\leqslant K_1\|B\|$, for all $n \geqslant 1,$ 
\item[(ii)] for any $A:E_m\to E_n^*$, 
$\gamma_2(A)\leqslant K_2\|A\|$, for all $m,n \geqslant 1$.
\end{enumerate}
\item[(3)] There are positive constants $K_1$ and $K_2$ such that 
\begin{enumerate}
\item[(i)] for any Hilbert space $\mathcal H$ and for $B:E_n^*\to \mathcal{H}$,  
$\pi_2(B)\leqslant K_1\|B\|$, for all $n \geqslant 1$, 
\item[(ii)] for any $A:E_m\to E_n^*$, 
$\gamma_2(A)\leqslant K_2\|A\|$, for all $m,n \geqslant 1$.
\end{enumerate}
\item[(4)] There is a constant $K>0$ such that  $\gamma_2^*(A)\leqslant K\|A\|$ for all $A:E_m\to E_n^*$ and for all $m,n\geqslant 1.$
\end{enumerate}
\end{proposition}
\begin{proof}
\textbf{$(1) \implies (2)$}: In what follows, we let $C>0$ be as in Definition \ref{GTPDEF}.

 Let $n\in\mathbb N$ and $B:E_n^*\to\ell_2^k$ be a linear operator for some $k\in\mathbb{N}$. Fix $\epsilon>0.$ Then by Dvoretzky's theorem for some large $m\in\mathbb{N}$ there exists $(1+\epsilon)$-isometry $j:\ell_2^k\to E_m.$ Let $A:E_m\to E_n^*$ be a linear operator. 
From (1), note that 
\begin{eqnarray}\label{Dvoretzky}
|\operatorname{tr}(jBA)|\leqslant N(jBA) \leqslant \|j\| N(BA) \leqslant C\|j\| \|A\| \|B\|,
\end{eqnarray}
where the last inequality follows from Lemma \ref{Pisier-Nuclear}.
Taking supremum over $A\in (E_m^*\check{\otimes}E_n^*)_1$  in \eqref{Dvoretzky}, we get 
\begin{eqnarray*}
N(jB)\leqslant C(1+\epsilon) \|B\|.
\end{eqnarray*}
Since $\pi_1(B)\leqslant\pi_1(jB)\leqslant N(jB),$ we get the first part of (2). 

Now for the second part, let $A:E_m\to E_n^*$ be a map and let $v:E_n^*\to E_m$ be a map. Consider a factorization of $v$ as $v=\alpha B,$ where $B:E_n^*\to \ell_2^k$ and $\alpha:\ell_2^k\to E_m$ for some $k\in\mathbb{N}.$ Note that 
\begin{eqnarray*}
N(vA)=N(\alpha B A)\leqslant \|\alpha\| N(BA) \leqslant C \|\alpha\| \|B\| \|A\|.
\end{eqnarray*}
Taking infimum over $B$ and $\alpha$ such that $v=\alpha B$ in the above inequality, we get
\begin{eqnarray}\label{gamma-2(v)}
|tr(vA)|\leqslant N(vA) \leqslant C \gamma_2(v) \|A\|.
\end{eqnarray}
We take supremum over $v$ in \eqref{gamma-2(v)} such that $\gamma_2(v)\leqslant 1,$ and obtain  
\begin{eqnarray}\label{gamma-2-*}
\gamma_2^*(A)\leqslant C \|A\|,
\end{eqnarray}
where $\gamma_2^*$ is the dual norm corresponding to the norm $\gamma_2.$ From \cite[Proposition 2.11]{Pisier-factorization}, we know that
\begin{eqnarray}\label{gamma*}
\gamma_2^*(A)=\inf\{\pi_2(\phi)\pi_2(\psi^*):\phi:E_m\to\ell_2,\ \psi:\ell_2\to E_n^*, A=\psi\phi\}.
\end{eqnarray}
By definition, $\gamma_2(A)=\inf\{\|\phi\| \|\psi\|:\phi:E_m\to\ell_2,\ \psi:\ell_2\to E_n^*,A=\psi\phi\}.$ Hence from \eqref{gamma-2-*}, we have
\begin{eqnarray*}
\gamma_2(A)\leqslant\gamma_2^*(A)\leqslant C\|A\|.
\end{eqnarray*}
This completes the proof of $(1) \implies (2)$ and also $(1)\implies (4).$

\textbf{$(2) \implies (3)$}: This follows from the fact that $\pi_2(\cdot)\leqslant \pi_1(\cdot).$ 

\textbf{$(3) \implies (1)$}:
Let $A:E_m\to E_n^*$ be any linear map. Choose linear maps $S:E_m\to\mathcal{H}$ and $T:E_n\to\mathcal{H}$ such that $A=T^*S$. 
From the hypothesis in (3), it is clear that $\|id\|_{\prod_2(E_n^*,\mathcal H)\to B(E_n^*,\mathcal H)}\leqslant K_1$ for each $n\in\mathbb N.$ From Lemma \ref{Norm of inclusion of pi-2(E,H)}, it follows that $\|id\|_{\prod_2(E_n,\mathcal H)\to B(E_n,\mathcal H)}\leqslant K_1$ for each $n\in\mathbb N.$ Hence $\pi_2(T)\leqslant K_1\|T\|$ and $\pi_2(S)\leqslant K_1 \|S\|.$
These put together with \eqref{gamma*}, it follows that
\[\gamma_2^*(A)\leqslant \pi_2(S)\pi_2(T)\leqslant K_1^2\|S\| \|T\|.\] 
Taking infimum over $S$ and $T,$ we obtain 
\begin{eqnarray}\label{gamma2* in 3 to 1}
\gamma_2^*(A)\leqslant K_1^2 \gamma_2(A)\leqslant K_1^2K_2\|A\|.
\end{eqnarray}
Take $B:E_n^*\to\mathcal{H}$ such that $\|B\| \leqslant 1.$ Then observe that from trace duality and \eqref{gamma2* in 3 to 1}, we have
\begin{eqnarray*}
N(BA) &=& \sup\{|\operatorname{tr}(DBA)|:\|D\|_{{\mathcal{H}\to E_m}} \leqslant 1\}\\ 
& \leqslant & \sup_{\|D\|\leqslant 1} \gamma_2^*(A) \gamma_2(DB)\\
& \leqslant & \sup_{\|D\|\leqslant 1} \|B\| \|D\| \gamma_2^*(A)\\
& \leqslant & K_1^2 K_2 \|A\|. 
\end{eqnarray*}
This completes the proof of (3)$\implies$(1).

\textbf{$(1) \implies (4)$}: In the proof of the implication $(1)\implies (2),$ we have noted that the inequality in \eqref{gamma-2-*} follows from the assumptions in the statement $(1)$ proving the assertion of $(4)$. 

\textbf{$(4) \implies (3)$}: Since  $\gamma_2(A)\leqslant \gamma_2^*(A)$ for any $A:E_m\to E_n^*$, part (ii) of (3) follows. Therefore, we only need to show that there is a constant $K_1 > 0$ such that for all Hilbert spaces $\mathcal{H}$ and $B:E_n^*\to\mathcal{H},$ $\pi_2(B)\leqslant K_1\|B\|$, $n\in\mathbb{N}$. Moreover, 
in the view of Lemma \ref{Norm of inclusion of pi-2(E,H)},
it is enough to show that for all $A:E_n\to\mathcal{H}$, $\pi_2(A)\leqslant K_1\|A\|.$ To this end, we claim that that $\gamma_2^*(A^*A)=\pi_2(A)^2.$ 
It is clear from \eqref{gamma*} that $\gamma_2^*(A^*A)\leqslant \pi_2(A)^2.$ To show the reverse inequality, note that (see \cite[Chapter 2]{Pisier-factorization} and  \cite[Chapter 7]{DJT})
\[\gamma_2^*(A^*A)=\sup\{|\operatorname{tr}(vA^*A)|:v:E_n^*\to E_n,\gamma_2(v)\leqslant 1\}.\] Hence for any $u:\ell_2^k\to E_n,$ with $\|u\|\leqslant 1,$ we have $\gamma_2^*(A^*A)\geqslant |\operatorname{tr}(uu^*A^*A)|=|\operatorname{tr}(Au(Au)^*)|=\pi_2(Au)^2,$ where in the last equality we have used that for an operator between Hilbert spaces, $2$-summing norm and Hilbert-Schmidt norms coincide. Hence we get \[\gamma_2^*(A^*A)\geqslant\sup\{\pi_2(Au)^2:\|u\|_{\ell_2^k\to E_n}\leqslant 1\}=\pi_2(A)^2,\] where in the last equality we have used \eqref{formufrpi}. This completes the proof of the claim. Now using hypothesis in (4), we get 
$$\pi_2(A)=\sqrt{\gamma_2^*(A^*A)}\leqslant \sqrt{K} \sqrt{\|A^*A\|}=\sqrt{K}\|A\|.$$
Choosing $K_1=\sqrt{K}$ completes the proof of $(4) \implies (3)$ and in turn completes the prof of the proposition.
\end{proof}
The corollary below gives a criterion for determining when $(E_n, \ell_2)$ is a Grothendieck pair in terms of the notion uniform GT introduced earlier in Definition \ref{GTunif}.
\begin{corollary}
    Let $(E_n)_{n\geqslant 1}$ be a sequence of finite-dimensional Banach spaces with $\dim E_n=n.$ Then $(E_n,\ell_2)$ is a Grothendieck pair if 
    \begin{itemize}
        \item[(i)] the family $(E_n^*)_{\geqslant 1}$ is uniformly of finite cotype and
        \item[(ii)] the family $(E_n^*)_{n\geqslant 1}$ satisfies GT uniformly.
    \end{itemize}
\end{corollary}
\begin{proof}
The proof follows easily from part (2) of Proposition \ref{originalgt} and \cite[Theorem 4.1]{Pisier-factorization}.
\end{proof}
We now use the above corollary to produce some examples of Grothendieck pair.
\begin{ex}\label{xample}
\begin{itemize}
    \item [(i)] The pair $(\ell_\infty^n,\ell_2)$ is a Grothendieck pair.
    \item[(ii)] Let $E_n:=\text{span}\{1,z,z^2,\dots,z^n\}$ be the space of polynomials of degree at most $n$ in $H_\infty(\mathbb{T})$, where $H_\infty(\mathbb{T})$ is the Hardy space. Then $((E_n)_{n\geqslant  1},\ell_2)$ is a Grothendieck pair.
\end{itemize}  
\end{ex}
\begin{proof}
    We only prove (ii). Note that $H_\infty(\mathbb{T})^*$ is of  cotype $2$ as $H_\infty(\mathbb{T})^*=(L_1(\mathbb{T})/H_1(\mathbb{T}))^{**}$ and the latter space is of cotype $2$ \cite{Pisier-factorization}. Note that the $\ell_\infty$-direct sum $(\Sigma_{n=0}^\infty E_n)_\infty$ is isomorphic to $H_\infty(\mathbb{T})$ by Bourgain and Pełczyński \cite{Bout3}. Now to complete the proof, note that Bourgain \cite{Bourgain} proved that $L_1(\mathbb{T})/H_1(\mathbb{T})$ is a GT space and hence the double dual $H_\infty(\mathbb{T})$ is also a GT space \cite[Proposition 6.2]{Pisier-factorization}.
\end{proof}

\begin{lemma} \label{lem:4.2}
Suppose that $E$  is a finite-dimensional Banach space. Then $K_G(E,E^*)$ $=$ $K_G(E,E)$ $=\rho(E,E)$.
\end{lemma}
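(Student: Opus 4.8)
The plan is to prove the two equalities in turn, the second being where the actual work lies. The identity $K_G(E,E^*)=K_G(E,E)$ is just Lemma \ref{basic-properties-KG}(i) applied with $F=E$, and the inequality $K_G(E,E)\leqslant\rho(E,E)$ is exactly Lemma \ref{basic-properties-KG}(ii) with $F=E$. So the only point remaining is the reverse inequality $\rho(E,E)\leqslant K_G(E,E)$.

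To prove it, first I would record the operator-theoretic description of $\rho(E,E)$. Under the identification of $E\otimes E$ with a subspace of $B(E^*,E)$ used throughout, a tensor $z\in E\otimes E$ satisfies $\|z\|_{E\check{\otimes}E}=\|z\|_{E^*\to E}$ and $\|z\|_{E\hat{\otimes}E}=N(z\colon E^*\to E)$, so that
\[\rho(E,E)=\sup\bigl\{N(z\colon E^*\to E): z\colon E^*\to E,\ \|z\|_{E^*\to E}\leqslant 1\bigr\}.\]
Now fix a contraction $z\colon E^*\to E$ and apply trace duality for the nuclear norm,
\[N(z\colon E^*\to E)=\sup\bigl\{|\operatorname{tr}(Sz)|: S\colon E\to E^*,\ \|S\|_{E\to E^*}\leqslant 1\bigr\},\]
the trace being that of $Sz\colon E^*\to E^*$. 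For each contraction $S\colon E\to E^*$, cyclicity of the trace gives $\operatorname{tr}(Sz)=\operatorname{tr}(zS)$ with $zS\colon E\to E$, and Lemma \ref{Pisier-Nuclear} (with $F=E$, optimal constant $K_G(E,E)$) applied with $A=S$ and $B=z$, together with $|\operatorname{tr}(T)|\leqslant N(T)$, yields
\[|\operatorname{tr}(zS)|\leqslant N(zS\colon E\to E)\leqslant K_G(E,E)\,\|S\|_{E\to E^*}\|z\|_{E^*\to E}\leqslant K_G(E,E).\]
Taking the supremum first over $S$ and then over $z$ gives $\rho(E,E)\leqslant K_G(E,E)$, which, with the first paragraph, completes the proof.

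The argument is essentially mechanical once the preparatory lemmas are in hand; the one conceptual point that needs care is that although $\rho(E,E)$ is most naturally phrased through single operators $E^*\to E$ while $K_G(E,E)$ controls the two-fold compositions $E\to E^*\to E$, trace duality combined with cyclicity of the trace is exactly the device reconciling the two configurations. One could equivalently run the whole computation through the duality $(E\hat{\otimes}E)^*\cong E^*\check{\otimes}E^*$, identifying both $\rho(E,E)$ and $K_G(E,E)$ with $\sup\{|\langle w,v\rangle|: \|w\|_{E\check{\otimes}E}\leqslant 1,\ \|v\|_{E^*\check{\otimes}E^*}\leqslant 1\}$, but the operator formulation above seems the most transparent.
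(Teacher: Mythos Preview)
Your proof is correct and uses the same key tool as the paper (Lemma~\ref{Pisier-Nuclear}), but the route to the inequality $\rho(E,E)\leqslant K_G(E,E)$ differs slightly. The paper simply plugs $B=\operatorname{id}_{E^*}$ into Lemma~\ref{Pisier-Nuclear} with $F=E^*$, obtaining $N(A)\leqslant K_G(E,E^*)\|A\|$ for every $A:E\to E^*$, i.e.\ $\rho(E^*,E^*)\leqslant K_G(E,E^*)$, and then invokes the external identity $\rho(E^*,E^*)=\rho(E,E)$ from \cite[Proposition~12]{XOR games and GT}. Your argument instead stays entirely within $E$ by using trace duality and cyclicity to bound $N(z:E^*\to E)$ by $\sup_S N(zS:E\to E)$, to which Lemma~\ref{Pisier-Nuclear} with $F=E$ applies directly. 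The paper's version is one line shorter but imports a fact from outside; yours is self-contained. Either is fine.
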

\begin{proof}
Let $A:E\to E^*$ be any linear map. Taking $B=\text{id}_{E^*}$ in \eqref{Nuclear} of Lemma \ref{Pisier-Nuclear}, it follows that $N(A)\leqslant K_G(E,E^*)\|A\|_{E\to E^*}.$
This shows that $\|A\|_{E^*\hat{\otimes}E^*}\leqslant K_G(E,E^*)\|A\|_{E^*\check{\otimes}E^*}$ for all linear maps $A:E\to E^*,$ and consequently we get $\rho(E^*,E^*)\leqslant K_G(E,E^*).$ 
On the other hand, by Lemma \ref{basic-properties-KG}, we know that $K_G(E,E^*)=K_G(E,E)\leqslant \rho(E,E)$. Moreover, $\rho(E^*,E^*)=\rho(E,E)$ (see \cite[Proposition 12]{XOR games and GT}). Therefore, we conclude that $K_G(E,E)=\rho(E,E).$ 
\end{proof}
The corollary below follows easily from Lemma \ref{lem:4.2}. 
\begin{corollary}\label{impcor}
Let $n\geqslant 1,$ $1\leqslant p\leqslant \infty.$ 
Then,
\begin{itemize}
\item[(i)]$
K_G(\ell_p^n,\ell_p^n)=K_G(\ell_p^n,\ell_{p^\prime}^n)=\rho(\ell_p^n,\ell_p^n).$
\item[(ii)]$
K_G(S_p^{n,\text{sa}},S_p^{n,\text{sa}})=K_G(S_p^{n,\text{sa}},S_{p^\prime}^{n,\text{sa}})=\rho(S_p^{n,\text{sa}},S_p^{n,\text{sa}}).$
\end{itemize}
\end{corollary}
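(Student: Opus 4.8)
The plan is to deduce Corollary \ref{impcor} directly from Lemma \ref{lem:4.2} by specializing $E$ to $\ell_p^n$ in part (i) and to $S_p^{n,\mathrm{sa}}$ in part (ii). The only genuine input needed beyond Lemma \ref{lem:4.2} is the identification of the dual Banach space in each case: $(\ell_p^n)^* = \ell_{p'}^n$ isometrically via the pairing $\langle x,y\rangle = \sum_j x_j y_j$, and $(S_p^{n,\mathrm{sa}})^* = S_{p'}^{n,\mathrm{sa}}$ isometrically via $\langle A,B\rangle = \operatorname{tr}(AB)$. Both facts are recorded in the Preliminaries (the $S_p$ duality is stated explicitly just before Theorem \ref{LpncG}, and the $\ell_p$ duality is classical), so no new work is required there.

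First I would take $E = \ell_p^n$. Then $E^* = \ell_{p'}^n$ isometrically, so Lemma \ref{lem:4.2} gives
\begin{eqnarray*}
K_G(\ell_p^n,\ell_{p'}^n) = K_G(\ell_p^n,(\ell_p^n)^*) = K_G(\ell_p^n,\ell_p^n) = \rho(\ell_p^n,\ell_p^n).
\end{eqnarray*}
This is exactly assertion (i); note one must invoke that $K_G(E,F)$ depends on $F$ only up to isometry (immediate from the definition, since an isometry $F\cong G$ induces isometries on the relevant tensor products), so that $K_G(\ell_p^n,(\ell_p^n)^*) = K_G(\ell_p^n,\ell_{p'}^n)$. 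Then I would repeat the argument verbatim with $E = S_p^{n,\mathrm{sa}}$, using $(S_p^{n,\mathrm{sa}})^* = S_{p'}^{n,\mathrm{sa}}$ isometrically, to obtain (ii).

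There is essentially no obstacle here: the corollary is a pure specialization, and the phrase ``follows easily from Lemma \ref{lem:4.2}'' in the statement is accurate. The only point meriting a word of care is the endpoint cases $p=1$ and $p=\infty$ of part (i): these are still covered since $(\ell_1^n)^* = \ell_\infty^n$ and $(\ell_\infty^n)^* = \ell_1^n$ isometrically in finite dimensions, and Lemma \ref{lem:4.2} and Lemma \ref{basic-properties-KG}(i) impose no restriction on $E$ other than finite-dimensionality. For part (ii) the hypotheses $1<p<\infty$ of Theorem \ref{LpncG} play no role, since we are only using the finite-dimensional self-adjoint Schatten duality, which holds for all $p\in[1,\infty]$. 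So the proof is a two-line invocation of Lemma \ref{lem:4.2} for the two families of spaces, preceded by recalling the relevant isometric duality identifications.
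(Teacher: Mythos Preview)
Your proposal is correct and is exactly the argument the paper intends: the corollary is stated as following easily from Lemma \ref{lem:4.2}, and you have simply specialized $E$ to $\ell_p^n$ and to $S_p^{n,\mathrm{sa}}$, invoking the standard isometric dualities $(\ell_p^n)^*\cong\ell_{p'}^n$ and $(S_p^{n,\mathrm{sa}})^*\cong S_{p'}^{n,\mathrm{sa}}$ recorded in the preliminaries.
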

We gather some consequences of Corollary \ref{impcor} in the following lemma. 
The assertion of part (i), for instance, follows from Corollary \ref{impcor} and \cite[Page 697]{XOR games and GT}. Also, (ii) follows from Corollary \ref{impcor} and \cite[Proposition 12]{XOR games and GT}. Part (iii) follows by similar reasoning.
\begin{lemma}\label{prefin}
Suppose $n\geqslant 1.$ Then,
\begin{itemize}
\item[(i)] there exist positive constants $c_1$ and $c_2$ such that $c_1\sqrt{n}\leqslant K_G(\ell_\infty^n,\ell_\infty^n)\leqslant c_2\sqrt{n}$;
\item[(ii)] $K_G(\ell_2^n,\ell_2^n)=\rho(\ell_2^n,\ell_2^n)=n$, and 
\item[(iii)] there exist positive constants $c_1$ and $c_2$ such that $c_1\sqrt{n}\leqslant K_G(\ell_1^n,\ell_\infty^n)=\rho(\ell_1^n,\ell_1^n)\leqslant c_2\sqrt{n}$.
\end{itemize}
\end{lemma}
	In the following proof, if there exists a universal constant C such that $A_n \geqslant C B_n$, $n\in \mathbb{N}$,  then we write $A_n \gtrsim  B_n$.
\begin{proof}[{Proof of Theorem \ref{mainthm}}:] Suppose that $(\mathcal E, F)$  is a Grothendieck pair. To prove the theorem, we may assume without loss of generality that $\text{dim}\,F$ is not finite.
	Note that by Theorem \ref{Maurey-Pisier}, it is enough to show that both $F$ and $F^*$ do not contain $\ell_\infty^n$'s uniformly. From Lemma \ref{basic-properties-KG}, it is enough to show that $F$ does not contain $\ell_\infty^n$'s uniformly. We use the method of contradiction to prove this. Suppose $F$ contains $\ell_\infty^n$'s uniformly. Then  $\sup_{n\geqslant 1}f(\ell_\infty^n, F)<\infty.$ Therefore, by Theorem \ref{trichotomy}, namely, the `$\ell_1/\ell_2/\ell_\infty$'-  trichotomy, Lemma \ref{basic-properties-KG}, Lemma \ref{KG and Subspaces}, and Lemma \ref{prefin},  we have the following 
	\begin{itemize}
\item[(i)]$K_G(E_{_n},F)\gtrsim f(\ell_\infty^{c\sqrt{n}},E)^{-2}K_G(\ell_\infty^{c\sqrt{n}},\ell_\infty^{c\sqrt{n}}){\gtrsim}\frac{n^{\frac{1}{4}}}{A^2{\log n}}$ 
	\item[(ii)] $K_G(E_{_n},F)\gtrsim f(\ell_1^{c\sqrt{n}},E)^{-2}K_G(\ell_1^{c\sqrt{n}},\ell_\infty^{c\sqrt{n}})\gtrsim\frac{n^{\frac{1}{4}}}{A^2{\log n}}$
	\item[(iii)] $K_G(E_n,F)\gtrsim f(\ell_2^{cA^2/\log{n}},E)^{-2}K_G(\ell_2^{cA^2/\log{n}},\ell_\infty^{cA^2/\log{n}})\gtrsim\frac{A^2}{\log^3n}$
	\end{itemize}
Now taking $A=n^{1/16}$, we see that the rightmost quantities in the above three inequalities tend to infinity as $n$ approaches to infinity. This contradits the assumption that $(\mathcal E, F)$  is a Grothendieck pair.
\end{proof}	
As explained in the introduction, we have an immediate corollary of Theorem \ref{mainthm}. 
\begin{corollary}
Suppose that  $(\mathcal E, F)$  is a Grothendieck pair and $F$ is an infinite-dimensional GL-space.
Then $(\mathcal E, \ell_2)$ is a Grothendieck pair.
\end{corollary}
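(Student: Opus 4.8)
The plan is to combine Theorem~\ref{mainthm} with the structure theory of GL-spaces to reduce the problem to the case $F=\ell_2$. First I would invoke Theorem~\ref{mainthm}: since $(\mathcal E, F)$ is a Grothendieck pair and $F$ is infinite dimensional, both $F$ and $F^*$ must be of non-trivial cotype, i.e. $q(F)<\infty$ and $q(F^*)<\infty$. Next I would use that $F$ is a GL-space together with \cite[Theorem~17.13]{DJT} — exactly the implication already announced in the paragraph after Theorem~\ref{mainthm} — to conclude that $F$ is $K$-convex, equivalently $p(F)>1$ by Theorem~\ref{thm:2.6}. So $F$ is now a $K$-convex GL-space of finite cotype; the point is that such a space is locally $\pi$-Euclidean by Theorem~\ref{pieucli}, hence $\ell_2^n$'s sit inside $F$ uniformly and, crucially, \emph{uniformly complemented}.

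The second half of the argument is to push the Grothendieck-pair property from $F$ down to $\ell_2$. Here the key tool is Lemma~\ref{KG and Subspaces}(i): if $f(\ell_2, F)$ and $f(\ell_2, F^*)$ were finite we would immediately get $K_G(E_n,\ell_2)\leqslant \min\{f(\ell_2,F),f(\ell_2,F^*)\}\,K_G(E_n,F)$, uniformly bounded in $n$, and we would be done. Of course $\ell_2$ need not factor through $F$ globally, so instead I would run this at the level of finite-dimensional sections: given $n$, the relevant maps $A\colon E_n\to E_n^*$ and $B\colon E_n^*\to\ell_2$ have ranges of dimension at most $n$, so it suffices to factor a fixed finite-dimensional Hilbert space $\ell_2^N$ (with $N$ as large as needed, depending on $n$) uniformly through $F$. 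Since $F$ is locally $\pi$-Euclidean, for every $N$ there is a subspace $G\subseteq F$ with $d(G,\ell_2^N)\leqslant 1+\epsilon$ and a projection $P\colon F\to G$ of norm bounded by a constant $C$ depending only on $K(F)$; composing with this near-isometry and projection gives $f(\ell_2^N, F)\leqslant C(1+\epsilon)$ uniformly in $N$. Feeding this into the computation inside the proof of Lemma~\ref{KG and Subspaces}(i) (the factorization $A\otimes\operatorname{id}_{\ell_2}=(\operatorname{id}_{E_n^*}\otimes v)\circ(A\otimes\operatorname{id}_F)\circ(\operatorname{id}_{E_n}\otimes u)$, now with $u,v$ the near-isometric embedding of the relevant Hilbertian section and its projection) yields $K_G(E_n,\ell_2)\leqslant C'\,K_G(E_n,F)$ with $C'$ independent of $n$, which is precisely the statement that $(\mathcal E,\ell_2)$ is a Grothendieck pair.

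The main obstacle, and the place where care is needed, is the ``localization'' step: one must make sure that the finite-dimensional Hilbertian subspaces of $F$ used for different $n$ are genuinely uniformly complemented with a bound not depending on $n$, and that the dimension $N(n,\epsilon)$ supplied by local $\pi$-Euclideanness is at least the dimension of the range one needs to capture (which is $\leqslant n$). Both are guaranteed: local $\pi$-Euclideanness gives, for \emph{every} target dimension, a subspace of any sufficiently high-dimensional subspace of $F$, and since $F$ is infinite dimensional such high-dimensional subspaces always exist; the complementation constant $C$ is uniform by definition. One subtlety worth flagging is that Lemma~\ref{KG and Subspaces}(i) as stated asks for a global factorization $f(\ell_2,F)$; the honest thing is to either state and use a finite-dimensional variant (replacing $Y$ by $\ell_2^N$ and $F$ by $G$, which the same proof gives verbatim) or to observe that because each $B\colon E_n^*\to\ell_2$ has finite rank, only such finite sections are ever used. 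I would write it in the latter form, remarking that this is exactly the argument alluded to by the phrase ``following the proof of Corollary~\ref{cor4.6}'' in the introduction. The remaining steps — quoting Theorems~\ref{thm:2.6}, \ref{pieucli} and \cite[Theorem~17.13]{DJT} — are invocations of results already recorded above and require no further work.
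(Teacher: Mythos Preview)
Your proposal is correct and follows exactly the same route as the paper: Theorem~\ref{mainthm} plus \cite[Theorem~17.13]{DJT} gives $p(F)>1$, then Theorems~\ref{thm:2.6} and~\ref{pieucli} give locally $\pi$-Euclidean, and Lemma~\ref{KG and Subspaces} transfers the bound to $\ell_2$. You are in fact more explicit than the paper about the localization step (using uniformly complemented $\ell_2^N$'s rather than a global $f(\ell_2,F)$), which the paper handles only by a reference to Lemma~\ref{KG and Subspaces} and the parallel argument in Corollary~\ref{cor4.6}.
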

\begin{proof}
    Note that by Theorem \ref{mainthm} and \cite[Theorem 17.13]{DJT}, we have $p(F)>1.$ Now the implication follows from Theorem 2.6, Theorem 2.7 and Lemma 3.4.
\end{proof}
We note that \cite[Theorem 17.13]{DJT} provides many examples of Banach spaces, where the assertion of the Conjecture \ref{Pisann} is evident.  
\begin{corollary}\label{cor4.6}  An affirmative answer to Conjecture \ref{Pisann} verifies the validity of Conjecture \ref{Conpis}.
\end{corollary}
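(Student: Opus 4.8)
The plan is to chain together the structural results already established. Suppose Conjecture \ref{Pisann} holds, and let $(\mathcal{E}, F)$ be a Grothendieck pair with $F$ a Banach space having the bounded approximation property. If $\dim F < \infty$ there is nothing to prove, so assume $\dim F = \infty$. By Theorem \ref{mainthm}, both $F$ and $F^*$ are of non-trivial cotype, that is $q(F) < \infty$ and $q(F^*) < \infty$. Since $F$ has the bounded approximation property, the hypotheses of Conjecture \ref{Pisann} are met, so its affirmative answer yields that $F$ is $K$-convex.

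Next I would upgrade $K$-convexity to a factorization statement that feeds into Proposition \ref{originalgt}. By Theorem \ref{thm:2.6}, $K$-convexity of $F$ gives $p(F) > 1$; equivalently, by Theorem \ref{pieucli}, $F$ is locally $\pi$-Euclidean. The point of local $\pi$-Euclideanness is precisely that every sufficiently high-dimensional subspace of $F$ contains a well-complemented, nearly Euclidean subspace of any prescribed finite dimension, with the complementation constant controlled by $K(F)$ alone. This is exactly the ingredient needed to transfer the Grothendieck pair property from $F$ to $\ell_2$: given any $m$, one finds inside $F$ a subspace that is $(1+\epsilon)$-isomorphic to $\ell_2^m$ and is the range of a projection of norm $\leqslant C(K(F))$, so $f(\ell_2^m, F) \leqslant C(1+\epsilon)$ uniformly in $m$. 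Feeding this into Lemma \ref{KG and Subspaces}(i), we get
\[
K_G(E_n, \ell_2^m) \leqslant f(\ell_2^m, F)\, K_G(E_n, F) \leqslant C(1+\epsilon)\, \sup_k \|A_k \otimes \mathrm{id}_F\| / \cdots,
\]
uniformly in $n$ and $m$, which is precisely the statement that $\sup_{n} K_G(E_n, \ell_2) < \infty$, i.e. $(\mathcal{E}, \ell_2)$ is a Grothendieck pair. In fact this is the content of the already-announced Corollary \ref{cor4.6}'s companion: "following the proof of Corollary \ref{cor4.6} one concludes that $(\mathcal{E},\ell_2)$ is then also a Grothendieck pair" — the GL-space case and the $K$-convex case run identically once one has $p(F) > 1$.

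I would then observe that the whole argument is the same one used to prove the preceding corollary (the GL-space case), with "$F$ is a GL-space, hence by Theorem 17.13 of \cite{DJT} $p(F) > 1$" replaced by "$F$ has the bounded approximation property and $q(F), q(F^*) < \infty$, hence by Conjecture \ref{Pisann} $F$ is $K$-convex, hence by Theorem \ref{thm:2.6} $p(F) > 1$". From $p(F) > 1$ onward — invoking Theorem \ref{pieucli} to get local $\pi$-Euclideanness, then Lemma \ref{KG and Subspaces} to descend to $\ell_2^m$'s uniformly — the two proofs coincide verbatim, so I would simply cite the proof of the previous corollary for this final step.

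The main obstacle, and the only genuinely non-routine point, is making precise the passage from "$F$ is locally $\pi$-Euclidean" to "$f(\ell_2^m, F) \leqslant C$ uniformly in $m$": one must check that the integer $N(m,\epsilon)$ in the definition of local $\pi$-Euclideanness does not obstruct the uniform bound, i.e. that $F$ being infinite dimensional supplies subspaces of arbitrarily large dimension so that for every $m$ one can indeed locate a $(1+\epsilon)$-Euclidean, uniformly complemented $m$-dimensional subspace. This is immediate from $\dim F = \infty$, but it is the step where the infinite-dimensionality hypothesis is actually consumed, and it is worth stating explicitly rather than folding silently into "the proof is the same as Corollary \ref{cor4.6}". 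Everything else is bookkeeping with the factorization-constant inequalities of Lemma \ref{KG and Subspaces} and the definitions of the relevant constants.
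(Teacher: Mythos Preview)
Your proposal is correct and follows essentially the same route as the paper: apply Theorem \ref{mainthm} to get $q(F),q(F^*)<\infty$, invoke Conjecture \ref{Pisann} to obtain $K$-convexity, use Theorem \ref{pieucli} to extract uniformly complemented $\ell_2^m$'s in $F$, and then transfer the Grothendieck-pair bound from $F$ to $\ell_2$ via Lemma \ref{KG and Subspaces}. The only discrepancy is cosmetic: you (correctly) appeal to Part (i) of Lemma \ref{KG and Subspaces} for the transfer $K_G(E_n,\ell_2^m)\leqslant f(\ell_2^m,F)\,K_G(E_n,F)$, whereas the paper's proof cites ``Part (ii)'' --- your citation is the one that actually matches the inequality being used.
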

\begin{proof}
Assume that $\text{dim}\,F=\infty.$ Note that if Conjecture \ref{Pisann} is assumed to have an affirmative answer, then by Theorem \ref{mainthm} and Theorem \ref{pieucli}, $F$ contains uniformly complemented $\ell_2^n$'s, since $F$ is $K$-convex. The proof is completed by applying (ii) of Lemma \ref{KG and Subspaces}.
\end{proof}
The corollary stated below follows from Lemma \ref{basic-properties-KG} and \cite[Proposition 12]{XOR games and GT}.
\begin{corollary}
Let $(E_n)$ be a sequence of finite-dimensional Banach spaces and $F$ be another Banach space. Suppose that $\sup_{n\geqslant 1}K_G(E_n,F)<\infty.$ Then  $\sup_{n\geqslant 1}\text{dim}\,E_n<\infty$ or $\text{dim}\,F<\infty$ or both $F$ and $F^*$ are of nontrivial cotype.
\end{corollary}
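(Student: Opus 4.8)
The plan is to reduce to Theorem~\ref{mainthm}. If $\sup_{n\ge 1}\dim E_n<\infty$, the first alternative holds and there is nothing to prove, so assume $\sup_{n\ge 1}\dim E_n=\infty$ and pass to a subsequence $(E_{n_k})_{k\ge 1}$ with $d_k:=\dim E_{n_k}$ strictly increasing (so $d_k\to\infty$); still $\sup_k K_G(E_{n_k},F)=:C<\infty$. If $\dim F<\infty$ the second alternative holds, so assume also $\dim F=\infty$; it remains to show that $F$ and $F^*$ are of non-trivial cotype. By Lemma~\ref{basic-properties-KG}(i) we have $K_G(E_{n_k},F^*)=K_G(E_{n_k},F)$ for every $k$, so the conclusion for $F^*$ will follow from that for $F$ by symmetry; hence, by Theorem~\ref{Maurey-Pisier}, it suffices to prove that $F$ does not contain $\ell_\infty^m$'s uniformly.

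Suppose, towards a contradiction, that $L:=\sup_{m\ge 1}f(\ell_\infty^m,F)<\infty$. I would then run the argument in the proof of Theorem~\ref{mainthm} with $E_{n_k}$ in place of $E_n$ and the genuine dimension $d_k$ in place of $n$: apply the trichotomy Theorem~\ref{trichotomy} to $E_{n_k}$ with the parameter $A=A_k:=d_k^{1/16}$, which is admissible for all large $k$ since then $1\le A_k\le\sqrt{d_k}$ and the auxiliary spaces $\ell_\infty^{c\sqrt{d_k}}$, $\ell_1^{c\sqrt{d_k}}$ and $\ell_2^{cA_k^2/\log d_k}$ all have dimension at most $d_k$ (needed to invoke Lemma~\ref{KG and Subspaces}(ii)). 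In whichever of the three alternatives holds, combining the resulting factorization bound with Lemmas~\ref{basic-properties-KG}, \ref{KG and Subspaces}, \ref{prefin} and the bound $L<\infty$, exactly as in that proof, yields
\[
K_G(E_{n_k},F)\ \gtrsim\ \frac{d_k^{1/4}}{A_k^2\log d_k}\qquad\text{or}\qquad K_G(E_{n_k},F)\ \gtrsim\ \frac{A_k^2}{\log^3 d_k}.
\]
With $A_k=d_k^{1/16}$ both lower bounds tend to $\infty$ as $k\to\infty$, contradicting $\sup_k K_G(E_{n_k},F)=C<\infty$. Hence $F$, and symmetrically $F^*$, contains no uniform copies of $\ell_\infty^m$'s, so by Theorem~\ref{Maurey-Pisier} both are of non-trivial cotype, completing the proof.

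I do not expect a genuine obstacle here: the only thing to verify is that the proof of Theorem~\ref{mainthm} uses the normalization $\dim E_n=n$ solely through $\dim E_n\to\infty$ and through the dimension inequalities that make Lemma~\ref{KG and Subspaces}(ii) applicable, both of which survive the passage to the subsequence $(E_{n_k})$ and the substitution of $d_k$ for $n$; the remaining bookkeeping --- checking the admissibility constraints on $A_k$ for large $k$ --- is immediate for the choice $A_k=d_k^{1/16}$. The appearance of Lemma~\ref{basic-properties-KG} and \cite[Proposition~12]{XOR games and GT} in the statement is through, respectively, the $F\leftrightarrow F^*$ symmetry above and the values of $K_G$ of the auxiliary $\ell_p$-spaces recorded in Lemma~\ref{prefin}.
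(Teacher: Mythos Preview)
Your proof is correct and follows the same route the paper has in mind: the paper gives no argument beyond the one-line pointer to Lemma~\ref{basic-properties-KG} and \cite[Proposition~12]{XOR games and GT}, which are precisely the tools feeding (through Lemma~\ref{prefin}) into the proof of Theorem~\ref{mainthm}, so the intended deduction is exactly to rerun that proof along a subsequence with $\dim E_{n_k}\to\infty$. Your bookkeeping---checking $1\le A_k\le\sqrt{d_k}$ and the dimension constraints needed for Lemma~\ref{KG and Subspaces}(ii)---is the only extra care required, and you handle it correctly.
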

Now we investigate $K_G(E,F)$ for various Banach spaces $E$ and $F.$ We begin with the following proposition, which is well known when $F=\ell_2.$
\begin{proposition}
If $n\geqslant 2$ and $\text{dim}\, F\geqslant 2$, then $K_G(\ell_\infty^n,F)\geqslant \sqrt{2}.$
\end{proposition}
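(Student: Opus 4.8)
The plan is to reduce to $n=2$ and then produce one explicit contraction and one explicit norming tensor. For the reduction, note that $\ell_\infty^2$ embeds isometrically and $1$-complementedly into $\ell_\infty^n$ (restrict to the first two coordinates), so $f(\ell_\infty^2,\ell_\infty^n)=1$; since $2\leqslant n$, Lemma \ref{KG and Subspaces}(ii) yields $K_G(\ell_\infty^2,F)\leqslant f(\ell_\infty^2,\ell_\infty^n)^2\,K_G(\ell_\infty^n,F)=K_G(\ell_\infty^n,F)$. Hence it suffices to show $K_G(\ell_\infty^2,F)\geqslant\sqrt2$ whenever $\dim F\geqslant 2$. (Alternatively one can skip the reduction and work directly with the map $A\in B(\ell_\infty^n,\ell_1^n)$ supported on the top-left $2\times2$ block; the computation below is unchanged.)

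I would then use the isometric identifications $\ell_\infty^2\check\otimes F\cong\ell_\infty^2(F)$ and $\ell_1^2\hat\otimes F\cong\ell_1^2(F)$ already recalled in the Introduction, and take $A=\tfrac12\bigl(\begin{smallmatrix}1&1\\1&-1\end{smallmatrix}\bigr)$ as a map $\ell_\infty^2\to\ell_1^2$. A one-line check gives $\|A\|_{\ell_\infty^2\to\ell_1^2}=\sup_{|s_1|,|s_2|\leqslant1}\tfrac12\bigl(|s_1+s_2|+|s_1-s_2|\bigr)=\sup_{|s_1|,|s_2|\leqslant1}\max(|s_1|,|s_2|)=1$, so $A$ is a contraction. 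For unit vectors $y_1,y_2\in F$ put $t=\mathbf{e}_1\otimes y_1+\mathbf{e}_2\otimes y_2$, so $\|t\|_{\ell_\infty^2\check\otimes F}=\max(\|y_1\|,\|y_2\|)=1$, while $(A\otimes\operatorname{id}_F)(t)=\tfrac12\,\mathbf{e}_1\otimes(y_1+y_2)+\tfrac12\,\mathbf{e}_2\otimes(y_1-y_2)$ has $\|(A\otimes\operatorname{id}_F)(t)\|_{\ell_1^2\hat\otimes F}=\tfrac12\bigl(\|y_1+y_2\|+\|y_1-y_2\|\bigr)$. Therefore $K_G(\ell_\infty^2,F)\geqslant\tfrac12\bigl(\|y_1+y_2\|+\|y_1-y_2\|\bigr)$ for every choice of unit vectors $y_1,y_2$, and these may be chosen inside any fixed $2$-dimensional subspace $X\subseteq F$, which exists since $\dim F\geqslant 2$.

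It then remains to prove the purely two-dimensional statement: any $2$-dimensional normed space $X$ contains unit vectors $y_1,y_2$ with $\|y_1+y_2\|+\|y_1-y_2\|\geqslant2\sqrt2$. This is the crux, and is essentially the classical lower bound $\sqrt2$ for James's non-square constant; I would include the short self-contained argument. Fix a unit vector $x\in X$ and parametrise the unit sphere of $X$ as a Jordan curve $\gamma$ with $\gamma(0)=x$ and $\gamma(t+\pi)=-\gamma(t)$. The continuous function $t\mapsto\|x+\gamma(t)\|-\|x-\gamma(t)\|$ equals $2$ at $t=0$ and $-2$ at $t=\pi$, so by the intermediate value theorem there is $t^\ast\in(0,\pi)$ with $\|x+y\|=\|x-y\|=:d$ where $y=\gamma(t^\ast)$; since $t^\ast\in(0,\pi)$ we have $y\neq\pm x$, so $x+y\neq0$ and $d>0$. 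If $d\geqslant\sqrt2$, take $(y_1,y_2)=(x,y)$, for which $\|y_1+y_2\|+\|y_1-y_2\|=2d\geqslant2\sqrt2$. If $d<\sqrt2$, set $u=(x+y)/d$ and $v=(x-y)/d$; these are unit vectors with $u+v=2x/d$ and $u-v=2y/d$, hence $\|u+v\|=\|u-v\|=2/d>\sqrt2$, and $(y_1,y_2)=(u,v)$ works. Combining with the previous paragraph gives $K_G(\ell_\infty^2,F)\geqslant\sqrt2$ and therefore $K_G(\ell_\infty^n,F)\geqslant\sqrt2$.

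The only genuinely nonroutine ingredient is this planar geometry lemma; everything else is the standard dictionary between the $\ell_\infty^n/\ell_1^n$ tensor norms and vector-valued $\ell_\infty^n$- and $\ell_1^n$-sums, together with the complementation estimate of Lemma \ref{KG and Subspaces}(ii). I expect the main point to watch in the write-up is simply to be careful that the supremum over tensors is respected (here the bound $\|y_1+y_2\|+\|y_1-y_2\|\geqslant2\sqrt2$ is achieved exactly, by compactness of the unit sphere of the finite-dimensional $X$, so no $\varepsilon$-argument is even needed).
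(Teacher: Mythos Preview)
Your argument is correct. The reduction to $n=2$ via Lemma \ref{KG and Subspaces}(ii) matches the paper's first step exactly. From there the paper proceeds differently: it observes that $\ell_\infty^2$ is isometric to $\ell_1^2=(\ell_\infty^2)^*$, so that the identity is an admissible $A$ and hence $K_G(\ell_\infty^2,F)=\rho(\ell_\infty^2,F)$ (the upper bound being Lemma \ref{basic-properties-KG}(ii)), and then simply quotes \cite[Proposition~14]{XOR games and GT} for $\rho(\ell_\infty^2,F)\geqslant\sqrt{2}$ when $\dim F\geqslant 2$. Your explicit $A=\tfrac12\bigl(\begin{smallmatrix}1&1\\1&-1\end{smallmatrix}\bigr)$ \emph{is} that isometry $\ell_\infty^2\to\ell_1^2$, so at the structural level you are computing the same quantity $\rho(\ell_\infty^2,F)$; the genuine difference is that you supply a self-contained proof of the cited bound via the James non-square constant argument (intermediate value theorem to find $\|x+y\|=\|x-y\|=d$, then the $d\leftrightarrow 2/d$ dichotomy). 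This buys independence from the external reference at the cost of a short extra lemma, and makes transparent exactly where the $\sqrt{2}$ comes from. One small caveat for the write-up: your identity $\tfrac12(|s_1+s_2|+|s_1-s_2|)=\max(|s_1|,|s_2|)$ and the IVT parametrisation of the unit sphere are real phenomena, consistent with the paper's standing assumption that Banach spaces are real unless stated otherwise; if you ever want a complex version you would need to rescale $A$ and adjust the planar lemma.
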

\begin{proof}
Note that by Lemma \ref{KG and Subspaces}, we have $K_G(\ell_\infty^n,F)\geqslant K_G(\ell_\infty^2,F).$ Since $\ell_\infty^2$ is isometric to $\ell_1^2$, using Lemma \ref{basic-properties-KG}, we readily have $K_G(\ell_\infty^2,F)=\rho(\ell_\infty^2,F).$ 
Now the result follows from \cite[Proposition 14]{XOR games and GT}.
\end{proof}
\begin{proposition}
Let $p\geqslant 1,$ 
then, the following statements are true
\begin{itemize}
\item[(i)] for $2\leqslant p<\infty$,   $\rho(\ell_1^n,\ell_p^n)=n^{\frac{1}{p^\prime}},$ and 
\item[(ii)] for $1<p\leqslant 2$, $\rho(\ell_1^n,\ell_p^n)=n^{\frac{1}{p}}.$
\end{itemize}
\end{proposition}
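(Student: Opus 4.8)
The plan is to turn the tensor‑norm quantity into a statement about finite families of vectors, dispose of both upper bounds at once by factoring through $\ell_2^n$, and then observe that the only real work is the lower bound in the range $1<p\le 2$.

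\smallskip
\noindent\emph{Reduction.} For an arbitrary Banach space $F$ I would first record the concrete description of $\rho(\ell_1^n,F)$. Using $\ell_1^n\hat{\otimes}F=\ell_1^n(F)$ and the canonical embedding $\ell_1^n\check{\otimes}F\subseteq B(\ell_\infty^n,F)$ — under which a tensor $u=\sum_{i=1}^n\mathbf e_i\otimes f_i$ has $\|u\|_\vee=\sup_{\varepsilon\in\{\pm1\}^n}\|\sum_i\varepsilon_i f_i\|_F$, exactly the device used in the Introduction to identify $\ell_\infty^n\check{\otimes}\ell_2\cong\ell_\infty^n(\ell_2)$ — one gets
\[
\rho(\ell_1^n,F)=\sup\Bigl\{\ \frac{\textstyle\sum_{i=1}^n\|f_i\|_F}{\textstyle\max_{\varepsilon\in\{\pm1\}^n}\bigl\|\sum_{i=1}^n\varepsilon_i f_i\bigr\|_F}\ :\ f_1,\dots,f_n\in F\ \text{not all zero}\ \Bigr\}.
\]
Both inequalities in the Proposition are instances of this formula.

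\smallskip
\noindent\emph{Upper bounds (both ranges).} Factor $\operatorname{id}_{\ell_p^n}$ through $\ell_2^n$ by the formal identities $\iota\colon\ell_p^n\to\ell_2^n$ and $\jmath\colon\ell_2^n\to\ell_p^n$, whose norms multiply to $\|\iota\|\,\|\jmath\|=d(\ell_p^n,\ell_2^n)=n^{|1/p-1/2|}$. Writing $\operatorname{id}_{\ell_1^n}\otimes\operatorname{id}_{\ell_p^n}$ as the composition
\[
\ell_1^n\check{\otimes}\ell_p^n\xrightarrow{\ \operatorname{id}\otimes\iota\ }\ell_1^n\check{\otimes}\ell_2^n\xrightarrow{\ \operatorname{id}\otimes\operatorname{id}\ }\ell_1^n\hat{\otimes}\ell_2^n\xrightarrow{\ \operatorname{id}\otimes\jmath\ }\ell_1^n\hat{\otimes}\ell_p^n
\]
and using that $\|\operatorname{id}_X\otimes T\|$ equals $\|T\|$ for both the injective and the projective tensor norm, one obtains $\rho(\ell_1^n,\ell_p^n)\le\|\iota\|\,\rho(\ell_1^n,\ell_2^n)\,\|\jmath\|=n^{|1/p-1/2|}\,\rho(\ell_1^n,\ell_2^n)$. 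Here $\rho(\ell_1^n,\ell_2^n)=\sqrt n$: the orthonormal family $f_i=\mathbf e_i$ gives the lower bound, and for any $(f_i)$ the Hilbert‑space identity $\mathbb E_\varepsilon\|\sum_i\varepsilon_i f_i\|_2^2=\sum_i\|f_i\|_2^2$ together with Cauchy–Schwarz gives the matching upper bound. Hence $\rho(\ell_1^n,\ell_p^n)\le n^{1/2+|1/p-1/2|}$, which equals $n^{1/p'}$ for $p\ge2$ and $n^{1/p}$ for $p\le2$. (For $p\ge2$ the same bound also drops out directly from the fact that $\ell_p^n$ has Rademacher cotype $p$ with constant $1$, via $\mathbb E_\varepsilon\|\sum_i\varepsilon_i f_i\|_p^p=\sum_k\mathbb E_\varepsilon|\sum_i\varepsilon_i f_i(k)|^p\ge\sum_i\|f_i\|_p^p$ and Hölder.)

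\smallskip
\noindent\emph{Lower bound, range $(i)$.} For $p\ge2$ the test family $f_i=\mathbf e_i$ already suffices: $\sum_i\|\mathbf e_i\|_p=n$ while $\max_\varepsilon\|\sum_i\varepsilon_i\mathbf e_i\|_p=n^{1/p}$, so $\rho(\ell_1^n,\ell_p^n)\ge n^{1-1/p}=n^{1/p'}$, which together with the previous paragraph finishes $(i)$. Note that the very same family produces only $n^{1/p'}<n^{1/p}$ when $p<2$, so it cannot be used for $(ii)$.

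\smallskip
\noindent\emph{The obstacle: lower bound, range $(ii)$.} The crux is $\rho(\ell_1^n,\ell_p^n)\ge n^{1/p}$ for $1<p\le2$. One must exhibit a genuinely more efficient test family: $n$ vectors whose $\ell_p$‑norms sum to order $n^{1+1/p-1/2}$, yet for which \emph{every} one of the $2^n$ signed sums $\sum_i\varepsilon_i f_i$ is concentrated enough to keep its $\ell_p$‑norm of order $n^{1/p-1/2}$. My plan is to assemble such a family from mutually disjoint, internally ``flat'' coordinate blocks carrying a normalised $\pm1$ Hadamard (or Discrete Fourier Transform) pattern — the prototype being $\rho(\ell_1^2,\ell_p^2)=2^{1/p}$, where the two signed sums fall on single coordinates — and then to optimise the block sizes so that the ratio climbs to $n^{1/p}$. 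Verifying that this construction is lossless — equivalently, that the factorization through $\ell_2^n$ above is reversible up to the absolute constant $1$ — is the technical heart of the statement and is where I would concentrate the effort.
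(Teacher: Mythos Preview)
For part (i) your argument coincides with the paper's: the upper bound comes from the factorisation $\rho(\ell_1^n,\ell_p^n)\le d(\ell_2^n,\ell_p^n)\,\rho(\ell_1^n,\ell_2^n)=n^{1/2-1/p}\cdot\sqrt n$, and the lower bound from the diagonal tensor $z=\sum_i\mathbf e_i\otimes\mathbf e_i$, for which $\|z\|_\wedge/\|z\|_\vee=n/n^{1/p}=n^{1/p'}$.

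For part (ii) the paper writes only ``The proof of part (ii) is similar and is omitted.'' You are right that the diagonal tensor again yields only $n^{1/p'}$, so the ``similar'' argument does not deliver the claimed lower bound $n^{1/p}$; the paper leaves exactly the gap you isolated. More importantly, your proposed block/Hadamard construction cannot close it, because the lower bound $n^{1/p}$ is \emph{false} for $1<p<2$ and large $n$. Indeed, $\ell_p$ has cotype $2$ for $1\le p\le 2$ with a constant $C_p$ independent of $n$, so for any $f_1,\dots,f_n\in\ell_p^n$
\[
\max_{\varepsilon}\Bigl\|\sum_i\varepsilon_if_i\Bigr\|_p\ \ge\ \Bigl(\mathbb E_\varepsilon\Bigl\|\sum_i\varepsilon_if_i\Bigr\|_p^2\Bigr)^{1/2}\ \ge\ C_p^{-1}\Bigl(\sum_i\|f_i\|_p^2\Bigr)^{1/2}\ \ge\ \frac{1}{C_p\sqrt n}\sum_i\|f_i\|_p,
\]
whence $\rho(\ell_1^n,\ell_p^n)\le C_p\sqrt n$, which is $o(n^{1/p})$ when $p<2$. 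This is also consistent with the paper's own estimate $\rho(\ell_1^n,\ell_1^n)\asymp\sqrt n$, which would force a discontinuity at $p=1$ if (ii) held as stated. So the obstacle you flagged is not a missing trick on your side but an error in the statement: only the upper bound $\rho(\ell_1^n,\ell_p^n)\le n^{1/p}$ survives for $1<p<2$.
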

\begin{proof}
Note that $\rho(\ell_1^n,\ell_2^n)=\sqrt{n}$ \cite[Equation (59)]{XOR games and GT}. Moreover, by \cite[Proposition 12]{XOR games and GT}, it follows that 
\[\rho(\ell_1^n,\ell_p^n)\leqslant \rho(\ell_1^n,\ell_2^n)d(\ell_2^n,\ell_p^n) \quad \forall p\geqslant 1.\] 
For $p$ such that $2\leqslant p < \infty$,
it is well known that $d(\ell_2^n,\ell_p^n)=n^{\frac{1}{2}-\frac{1}{p}}$ (see \cite{HandbookI}). Thus, it follows that $\rho(\ell_1^n,\ell_p^n)\leqslant n^{\frac{1}{p^\prime}}.$ Note that for a tensor of the form $z=\sum_{i=1}^nz_i\mathbf{e}_i\otimes\mathbf{e}_i$ we have $\|z\|_{\ell_1^n\check{\otimes}\ell_p^n}=\|(z_i)_{i=1}^n\|_p$ and $\|z\|_{\ell_1^n\hat{\otimes}\ell_p^n}=\|(z_i)_{i=1}^n\|_1.$ Thus we have $\rho(\ell_1^n,\ell_p^n)\geqslant n^{\frac{1}{p^\prime}}.$ This completes the proof of part(i) of the proposition. We omit the proof of part(ii) as it is similar to that of part(i). 
\end{proof}

\begin{lemma}\label{applyncLpGT}
Let $1<p<2$ and $z\in S_p^{n,sa}\check{\otimes}S_p^{n,sa}.$ Suppose  $\widetilde{z}:S_{p'}^{n,sa}\to S_p^{n,sa}$ is the corresponding linear map associated with $z$. Then there exist a constant $K_p$ and a positive linear functional $\phi$ on $S_{p'/2}^n$ of unit norm such that 
\[\|\widetilde{z}(x)\|_{S_p^{n,sa}}\leqslant K_p\|z\|_{S_p^{n,sa}\check{\otimes}S_p^{n,sa}}(\phi(x^2))^{\frac{1}{2}}, \quad \forall x\in S_{p'}^{n,sa}.\] 
The constant $K_p$ depends on the cotype constants of $S_{p}$ and $S_{p'}.$
\end{lemma}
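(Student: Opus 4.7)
The plan is to reformulate the desired estimate for $\widetilde z$ as a bound on a bilinear form and then invoke Theorem \ref{LpncG}. Writing $q=\tfrac{2p}{2p-1}$, so that trace duality identifies $(S_{2p}^{n,sa})^*$ with $S_q^{n,sa}$, I would introduce
\[
B\colon S_{2p}^{n,sa}\times S_{2p}^{n,sa}\to \mathbb R,\qquad B(x,y):=\operatorname{tr}\bigl(\widetilde z(x)\,y\bigr).
\]
Trace duality gives $\|\widetilde z(x)\|_{S_q^{n,sa}} = \sup\{|B(x,y)| : \|y\|_{S_{2p}^{n,sa}} \leqslant 1\}$, while the identification of the injective tensor norm with the operator norm of the associated map yields $\|B\|=\|\widetilde z\|_{S_{2p}^{n,sa}\to S_q^{n,sa}} = \|z\|_{S_q^{n,sa}\check{\otimes} S_q^{n,sa}}$. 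Hence proving the lemma reduces to controlling $|B(x,y)|$ by $\|B\|$ times a quadratic form in $x$ coming from a positive functional on $S_p^n$.

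Next I would apply Theorem \ref{LpncG} with both exponents equal to $2p$ to $B$, producing positive unit functionals $\phi,\psi$ on $S_p^n$ such that for all $x,y\in S_{2p}^{n,sa}$,
\[
|B(x,y)|\leqslant K_p\|B\|\,\Bigl(\phi\bigl(\tfrac{x^*x+xx^*}{2}\bigr)\Bigr)^{1/2}\Bigl(\psi\bigl(\tfrac{y^*y+yy^*}{2}\bigr)\Bigr)^{1/2}.
\]
Since $x$ and $y$ are self-adjoint, the symmetrized arguments collapse to $x^2$ and $y^2$. Taking the supremum over $y\in S_{2p}^{n,sa}$ with $\|y\|_{S_{2p}^{n,sa}}\leqslant 1$, and using that $\psi$ is a unit positive functional on $S_p^n$ together with the identity $\|y^2\|_{S_p^n}=\|y\|_{S_{2p}^n}^2$, the factor $(\psi(y^2))^{1/2}$ is at most $1$. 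This absorbs $\psi$ entirely and produces
\[
\|\widetilde z(x)\|_{S_q^{n,sa}} \leqslant K_p\,\|z\|_{S_q^{n,sa}\check{\otimes} S_q^{n,sa}}\,(\phi(x^2))^{1/2},
\]
which is exactly the statement of the lemma.

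The main point I expect to verify carefully is that Theorem \ref{LpncG}, stated in \cite{Xu06} for complex $S_p$, legitimately applies in the real self-adjoint setting and yields a constant depending only on the cotype constants of $S_{2p}$ and $S_{2p}^*$. This should be a bookkeeping matter: extending $B$ by $\mathbb C$-bilinearity to $S_{2p}^n\times S_{2p}^n$ loses at most a universal factor, the $S_{2p}^n$ and $S_{2p}^{n,sa}$ norms agree on self-adjoint elements, and because both arguments of $B$ live in $S_{2p}$ the cotype constants of $S_{2q}$ and $S_{2q}^*$ never appear in the invocation of Theorem \ref{LpncG}.
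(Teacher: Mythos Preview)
Your proposal is correct and follows essentially the same route as the paper. The only difference is presentational: what you flag at the end as a ``bookkeeping matter'' (extending $B$ from $S_{2p}^{n,sa}\times S_{2p}^{n,sa}$ to $S_{2p}^n\times S_{2p}^n$ at the cost of a universal factor) is precisely what the paper carries out first and explicitly, proving the comparison $\|z\|_{S_q^{n}\check{\otimes}S_q^{n}}\leqslant 4\|z\|_{S_q^{n,sa}\check{\otimes}S_q^{n,sa}}$ by splitting test elements into self-adjoint real and imaginary parts before invoking Theorem~\ref{LpncG}.
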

\begin{proof}
By considering $z$ to be also a tensor in $S_p^n\check{\otimes}S_p^n$, we claim that 
\begin{eqnarray}\label{comparing norm of self-adjoint tensor}
    \|z\|_{S_p^{n,sa}\check{\otimes}S_p^{n,sa}}\leqslant \|z\|_{S_p^{n}\check{\otimes}S_p^{n}}\leqslant 4\|z\|_{S_p^{n,{sa}}\check{\otimes}S_p^{n,sa}}.
\end{eqnarray}
The first inequality in \eqref{comparing norm of self-adjoint tensor} is trivial. For the second inequality in \eqref{comparing norm of self-adjoint tensor}, note that \[\|z\|_{S_p^{n,{sa}}\check{\otimes}S_p^{n,sa}}=\sup\left\{|\operatorname{tr}((a\otimes b)z)|:a,b\in S_{p'}^{n,sa},\ \|a\|_{S_{p'}^{n, sa}}=\|b\|_{S_{p'}^{n, sa}}=1\right\}.\] 
In a similar way we have
\[\|z\|_{S_p^{n}\check{\otimes}S_p^{n}}=\sup\left\{|\operatorname{tr}((a\otimes b)z)|:a,b\in S_{p'}^{n},\ \|a\|_{S_{p'}^{n}}=\|b\|_{S_{p'}^{n}}=1\right\}.\]
Also,  note that 
\begin{multline*}
|\operatorname{tr}((a\otimes b)z)|\leqslant \\|\operatorname{tr}((\text{Re}\, a\otimes\text{Re}\, b)z)|+|\operatorname{tr}((\text{Re}\, a\otimes\text{Im}\, b)z)|+|\operatorname{tr}((\text{Im}\, a\otimes\text{Re}\,b)z)|+|\operatorname{tr}((\text{Im}\, a\otimes\text{Im}\, b)z)|.
\end{multline*}
 Moreover, $\|a\|_{S_{p'}^n}\leqslant 1$ implies $\|\text{Re}\, a\|_{S_{p'}^n}\leqslant 1$ and $\|\text{Im}\,a\|_{S_{p'}^n}\leqslant 1.$ Therefore, we readily have 
\[\|z\|_{S_p^{n}\check{\otimes}S_p^{n}}\leqslant 4\|z\|_{S_p^{n,{sa}}\check{\otimes}S_p^{n,sa}}.\] 
This proves the claim. 
Consider the bilinear form $B_z:S_{p'}^n\times S_{p'}^{n}\to\mathbb C$ defined by  $B_z(x,y):=\operatorname{tr}((x\otimes y)z).$ 
Then \[\|B_z\|=\|z\|_{S_p^{n}\check{\otimes}S_p^{n}}.\] 
By Theorem \ref{LpncG}, there exist positive unit norm linear functionals $\phi$ and $\psi$ on $S_{p'/2}^n$ such that 
\[|\operatorname{tr}(\widetilde{z}(x)y)|=|\operatorname{tr}((x\otimes y)z)|\leqslant K_p\|z\|_{S_p^{n}\check{\otimes}S_p^{n}}\bigg(\phi\Big(\frac{x^*x+xx^*}{2}\Big)\bigg)^{\frac{1}{2}}\bigg(\psi\Big(\frac{y^*y+yy^*}{2}\Big)\bigg)^{\frac{1}{2}}\] for all $x,y\in S_{p'/2}^n.$ 
Note that if $y\in S_{p'}^{n,sa}$ with $\|y\|_{S_{p'}^{n,sa}}=1,$ then $\|y^*y\|_{S_{p'/2}^{n}}=\|yy^*\|_{S_{p'/2}^{n}}=1.$ 
Thus, by taking supremum over $\|y\|_{S_{p'}^{n,sa}}=1$, we obtain the desired result.
\end{proof}

 \begin{proposition}\label{geqnclp}
     For  $1< p<2$ and $n\in\mathbb{N}$, there is a universal constant $C$ independent of $n$ such that  $\rho(S_p^{n,\text{sa}},S_p^{n,\text{sa}})\geqslant Cn^{\frac{5}{2}-\frac{1}{p}}.$
 \end{proposition}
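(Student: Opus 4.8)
The strategy is to exhibit a single tensor $z\in S_p^{n,\text{sa}}\otimes S_p^{n,\text{sa}}$ whose injective norm is $\lesssim n^{2/p}$ while its projective norm is $\gtrsim n^{5/2+1/p}$; since $\rho(S_p^{n,\text{sa}},S_p^{n,\text{sa}})=\sup_z\|z\|_\wedge/\|z\|_\vee$, this gives the claim. Write $N=n^2=\dim S_p^{n,\text{sa}}$, fix an $\mathbb R$-orthonormal basis $(a_k)_{k=1}^{N}$ of the Hilbert space $S_2^{n,\text{sa}}$, let $H=(h_{k\ell})_{k,\ell=1}^{N}$ be an $N\times N$ matrix with $\pm1$ entries that is (up to a constant) $\sqrt N$ times an orthogonal matrix — a Hadamard matrix, or a random sign matrix — and set $z:=\sum_{k,\ell=1}^{N}h_{k\ell}\,a_k\otimes a_\ell$. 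Under the canonical identifications, $z$ corresponds to an operator $\widetilde z\colon S_{p'}^{n,\text{sa}}\to S_p^{n,\text{sa}}$ (with $p'$ the exponent conjugate to $p$) whose matrix in the basis $(a_k)$ is $H^t$, and $\|z\|_\vee=\|\widetilde z\|_{S_{p'}^{n,\text{sa}}\to S_p^{n,\text{sa}}}$, $\|z\|_\wedge=N(\widetilde z)$. So everything reduces to estimating these two quantities for this $\widetilde z$.

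For the injective norm I would factor $\widetilde z$ through $S_2^{n,\text{sa}}$: using $\|x\|_{S_2^n}\leqslant n^{1/2-1/p'}\|x\|_{S_{p'}^n}$ and $\|x\|_{S_p^n}\leqslant n^{1/p-1/2}\|x\|_{S_2^n}$ (valid for $p\leqslant2$), together with the fact that $\widetilde z$ acts on $S_2^{n,\text{sa}}$ as the matrix $H$ with $\|H\|_{\ell_2^N\to\ell_2^N}\asymp\sqrt N=n$, one gets $\|z\|_\vee\lesssim n^{1/2-1/p'}\cdot n\cdot n^{1/p-1/2}=n^{2/p}$ (in the random case, with overwhelming probability). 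Alternatively one reaches the same bound from Chevet's theorem (Theorem \ref{chevet}) applied to the Gaussian analogue $\sum_{k,\ell}g_{k\ell}a_k\otimes a_\ell$ with $T=S\colon\mathbf e_k\mapsto a_k$, since $\|T\|_{\ell_2^N\to S_p^{n,\text{sa}}}=n^{1/p-1/2}$ and $\mathbb E\|\sum_k g_k a_k\|_{S_p^{n,\text{sa}}}\asymp n^{1/p+1/2}$, the latter being the Schatten-$p$ norm of a Wigner-type Gaussian matrix.

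For the projective norm I would use trace duality, $N(\widetilde z)=\sup\{|\operatorname{tr}(\widetilde z\,v)|:v\colon S_p^{n,\text{sa}}\to S_{p'}^{n,\text{sa}},\ \|v\|\leqslant1\}$, with the witness $v=\lambda\,\widetilde z^{-1}$, $\lambda=\|\widetilde z^{-1}\|_{S_p^{n,\text{sa}}\to S_{p'}^{n,\text{sa}}}^{-1}$. Since $HH^t=NI$ (exactly for a Hadamard matrix), $\widetilde z^{-1}$ has matrix $N^{-1}H$, hence $\operatorname{tr}(\widetilde z\,v)=\lambda\operatorname{tr}(\text{id}_{S_p^{n,\text{sa}}})=\lambda N$, and therefore
\[
N(\widetilde z)\ \geqslant\ \frac{n^2}{\|\widetilde z^{-1}\|_{S_p^{n,\text{sa}}\to S_{p'}^{n,\text{sa}}}}\ \asymp\ \frac{n^4}{\|W_H\|_{S_p^{n,\text{sa}}\to S_{p'}^{n,\text{sa}}}},
\]
where $W_H$ is the operator on $S_2^{n,\text{sa}}$ acting on $(a_k)$-coordinates by the matrix $H$. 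Thus the whole argument rests on the ``spreading'' estimate $\|W_H\|_{S_p^{n,\text{sa}}\to S_{p'}^{n,\text{sa}}}\lesssim n^{3/2-1/p}$; granting it, $N(\widetilde z)\gtrsim n^{4-(3/2-1/p)}=n^{5/2+1/p}$, and combining with the injective bound yields $\rho(S_p^{n,\text{sa}},S_p^{n,\text{sa}})\gtrsim n^{5/2+1/p}/n^{2/p}=n^{5/2-1/p}$ (in the random case, after intersecting two overwhelming-probability events).

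The heart of the matter — and the step I expect to be the main obstacle — is the displayed mixed-norm bound. Its content is that a ``generic'' rotation of the matrix space sends every matrix of $S_p^n$-norm $1$ to a matrix with essentially flat spectrum at the scale $n^{-1/2}$, so its $S_{p'}^n$-norm is smaller than its $S_2^n$-norm by the factor $n^{1/p'-1/2}=n^{1/2-1/p}$; since $\|W_Hx\|_{S_2^n}=\|H\hat x\|_{\ell_2^N}\lesssim n\|x\|_{S_2^n}\leqslant n\|x\|_{S_p^n}$, this forces $\|W_Hx\|_{S_{p'}^n}\lesssim n^{1/2-1/p}\cdot n\,\|x\|_{S_p^n}=n^{3/2-1/p}\|x\|_{S_p^n}$. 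For a fixed $x$ this is a routine random-matrix computation, but making it uniform over the unit sphere of $S_p^{n,\text{sa}}$ is delicate: for $p<2$ the naive union bound over an $e^{O(n^2)}$-net is too weak. I would handle this by a chaining argument adapted to the stratification of the Schatten ball by rank (the low-rank strata, where the estimate is tightest, are low-dimensional), or, to keep the constant clean and avoid logarithmic losses, by choosing $H$ of a structured form for which the bound is verifiable directly. The remaining ingredients ($\|H\|_{\ell_2^N\to\ell_2^N}\asymp n$, $HH^t\asymp NI$, the Schatten comparison inequalities, and — if used — the Wigner Schatten-norm asymptotic) are all standard.
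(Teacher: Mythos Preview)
Your route is in fact the paper's route in disguise, but you have misidentified the hard step and left it unproved. Your trace-duality lower bound $N(\widetilde z)\geqslant N^2/\|W_H\|_{S_p^{n,\text{sa}}\to S_{p'}^{n,\text{sa}}}$ for a Hadamard coefficient matrix is exactly the pointwise inequality
\[
\langle z,z\rangle\ \leqslant\ \|z\|_{S_p^{n,\text{sa}}\hat\otimes S_p^{n,\text{sa}}}\cdot\|z\|_{S_{p'}^{n,\text{sa}}\check\otimes S_{p'}^{n,\text{sa}}}
\]
(since $\langle z,z\rangle=\sum h_{k\ell}^2=N^2$ and $\widetilde z^{-1}$ corresponds, up to a flip, to $N^{-1}z$ viewed in the dual pair). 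This is precisely the device the paper borrows from \cite{XOR games and GT}.

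The point you are missing is that your ``spreading estimate'' $\|W_H\|_{S_p^{n,\text{sa}}\to S_{p'}^{n,\text{sa}}}\lesssim n^{3/2-1/p}$ is \emph{nothing but} the injective norm bound $\|z\|_{S_{p'}^{n,\text{sa}}\check\otimes S_{p'}^{n,\text{sa}}}\lesssim n^{1/2+1/p'}$. For the Gaussian tensor this is immediate from a second application of Chevet's theorem, now on the $p'$ side: since $p'\geqslant 2$ one has $\|T\|_{\ell_2^{N}\to S_{p'}^{n,\text{sa}}}=1$ and $\mathbb E\|\sum_k g_k a_k\|_{S_{p'}^{n,\text{sa}}}\lesssim n^{1/2+1/p'}$, giving $\mathbb E\|z\|_{S_{p'}^{n,\text{sa}}\check\otimes S_{p'}^{n,\text{sa}}}\lesssim n^{1/2+1/p'}=n^{3/2-1/p}$. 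For random signs the same bound follows by Gaussian domination. So the step you call ``the heart of the matter'' and leave to a prospective chaining argument or a structured $H$ is in fact trivial once you recognise it as an injective norm on the dual side; no rank-stratified chaining, and no special $H$, is needed. Switch to Gaussian coefficients (as you already allow for the $S_p$ bound), apply Chevet twice, and the proof is complete---this is exactly what the paper does.
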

 \begin{proof}
     We follow the strategy of \cite[page 702]{XOR games and GT}. Let us fix an orthonormal basis $(x_i)_{i=1}^{n^2}$ in $M_n^{\text{sa}}$, the space of all self-adjoint matrices of order $n,$ with respect to the Hilbert-Schmidt norm. Let us consider the random tensor $z=\sum_{i,j=1}^{n^2}g_{ij}x_i\otimes x_j$ where $g_{ij}$'s are iid $N(0,1).$ Then by Chevet's theorem (Theorem \ref{chevet}), we have that 
     \[
\mathbb{E}\|z\|_{S_p^{n,\text{sa}}\check{\otimes}S_p^{n,\text{sa}}}\leqslant 2\|\text{id}\|_{S_2^{n,\text{sa}}\to S_p^{n,\text{sa}}}\mathbb{E}\Big\|\sum_{i=1}^{n^2}g_ix_i\Big\|_{S_p^{n,\text{sa}}}.
     \] 
     It can be deduced from \cite[Theorem 1.3]{Szarekcond}, $\mathbb{E}\Big\|\sum_{i=1}^{n^2}g_ix_i\Big\|_{S_p^{n,\text{sa}}}\leqslant C n^{\frac{1}{2}+\frac{1}{p}}$ for some universal constant $C>0$. It is easy to see that \[
        \|\text{id}\|_{S_2^{n,\text{sa}}\to S_p^{n,\text{sa}}} = \begin{cases}
                        1 , \ \text{for}\ p\geqslant 2; \\
                        n^{\frac{1}{p}-\frac{1}{2}}, \ \text{for}\ p<2.
                    \end{cases}
\]
Therefore, we have that 
    \[
       \mathbb{E}\|z\|_{S_p^{n,\text{sa}}\check{\otimes}S_p^{n,\text{sa}}}\leqslant C \begin{cases}
                        n^{\frac{1}{2}+\frac{1}{p}} , \text{for}\ p\geqslant 2; \\
                        n^{\frac{2}{p}}, \ \text{for}\ p<2.
                    \end{cases}
\]
     Note that by proceeding, as in \cite{XOR games and GT}, we have the following inequality
     \[\mathbb{E}\bigg(\sum_{i,j=1}^{n^2}g_{ij}^2\bigg)^{\frac{1}{2}}\leqslant \sqrt{\mathbb{E}\|z\|_{S_p^{n,\text{sa}}\hat{\otimes}S_p^{n,\text{sa}}}}\sqrt{\mathbb{E}\|z\|_{S_{p^\prime}^{n,\text{sa}}\check{\otimes}S_{p^\prime}^{n,\text{sa}}}},\] 
     Therefore, we have that $\mathbb E\|z\|_{S_p^{n,\text{sa}}\hat{\otimes} S_p^{n,\text{sa}}}\geqslant cn^{\frac{7}{2}-\frac{1}{p^\prime}}.$ Arguing as in \cite{XOR games and GT}, we have 
     \[\rho(S_p^{n,\text{sa}},S_p^{n,\text{sa}})\geqslant Cn^{\frac{5}{2}-\frac{1}{p}}.\]
     This completes the proof of the proposition.
 \end{proof}
 \begin{proposition}\label{leqnclp}
      For $1<p <2$, there exists a positive constant $K$ such that        $$\rho(S_p^{n,\text{sa}},S_p^{n,\text{sa}})\leqslant Kn^{\frac{3}{2}+\frac{1}{p'}} \quad \forall n\geqslant 1.$$ 
 \end{proposition}
 \begin{proof}
 The proof is similar to that of Theorem 8 in \cite{XOR games and GT}.
     Let $z\in M_n^{\text{sa}}\otimes M_n^{\text{sa}}$ be such that $\|z\|_{S_p^{n,\text{sa}}\check{\otimes} S_p^{n,\text{sa}}}=1.$ By Lemma \ref{applyncLpGT}, there exists a positive linear functional $\varphi$ on $S_{p'/2}^n$ such that 
     \[\|\widetilde{z}(x)\|_{S_p^{n,sa}}\leqslant K(\phi(x^2))^{\frac{1}{2}},\quad x\in S_{p'}^{n,sa},\]
      where $\widetilde z:S_{p'}^{n,\text{sa}}\to S_p^{n,\text{sa}}$ is the realization of $z$ as a linear map. The constant $K$ depends on the cotype constants of $S_{p}$ and $S_{p'}.$ 
      There exists an orthonormal basis $(u_j)_{j=1}^n$ such that we have $\varphi=\sum_{j=1}^n\lambda_jP_j$ where $P_j(u):=\langle u,u_j\rangle u_j$ for $1\leqslant j\leqslant n.$ Now define $E_{jk}$ as $E_{jk}(u):=\langle u,u_k\rangle u_j$ for all $1\leqslant j,k\leqslant n.$ Then it is easy to check that $((E_{jk})_{j,k=1}^n,(E_{kj})_{k,j=1}^n)$ is a biorthogonal system. By denoting $F_{jk}=E_{jk}+E_{kj}$ and $H_{jk}=i(E_{jk}-E_{kj})$, we have 
     \[z=\sum_{j=1}^nE_{jj}\otimes \widetilde{z}(E_{jj})+\frac{1}{2}\sum_{j<k}\Big(F_{jk}\otimes\widetilde{z}(F_{jk})+H_{jk}\otimes\widetilde{z}(H_{jk})\Big).\] 
     Again proceeding as in \cite{XOR games and GT}, we must have that
\[\|z\|_{S_p^{n,\text{sa}}\hat{\otimes}S_p^{n,\text{sa}}}\leqslant K\left(\sum_{j=1}^n\sqrt{\lambda_j}+\sum_{1\leqslant j<k\leqslant n}\sqrt{\lambda_j+\lambda_k}\right).\] 
Note that we must have $\sum_{j=1}^n\lambda_j^{\big(\frac{p^\prime}{2}\big)'}=1.$  
Hence, it follows that $\|z\|_{S_p^{n,\text{sa}}\hat{\otimes}S_p^{n,\text{sa}}}\leqslant Kn ^{\frac{3}{2}+\frac{1}{p'}}.$
This completes the proof of the proposition.
\end{proof}
Using Corollary \ref{impcor}, Proposition \ref{geqnclp} and Proposition \ref{leqnclp}, we have the following corollary.
 \begin{corollary}
    For $1\leqslant p<2$, there exist two positive constants $c$ and $C,$ depending only on $p,$ such that 
    \[cn^{\frac{5}{2}-\frac{1}{p}}\leqslant K_G(S_p^{n,\text{sa}},S_p^{n,\text{sa}})\leqslant Cn^{\frac{5}{2}-\frac{1}{p}}, \quad \forall n\geqslant 1.\] 
 \end{corollary}
 
 We now try find asymptotic behavior of $\rho(\ell_p^n,\ell_p^n)$ as $n\to\infty.$ The proof follows in a similar way to the above corollary. We sketch only the important steps in the proof.
 \begin{proposition}
      If $1< p<2$ and $n\in\mathbb{N}$, then $\rho(\ell_p^{n},\ell_p^{n})\geqslant Cn^{\frac{3}{2}-\frac{1}{p}}$ for some constant $C$ independent of $n$.
 \end{proposition}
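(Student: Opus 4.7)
\medskip

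\noindent\textbf{Proof plan.} The strategy mirrors Proposition \ref{geqnclp}, with $(x_i)_{i=1}^{n^2}$ replaced by the standard unit vector basis $(\mathbf e_i)_{i=1}^n$ of $\ell_p^n$. Consider the Gaussian random tensor
\[ z \;=\; \sum_{i,j=1}^n g_{ij}\, \mathbf e_i \otimes \mathbf e_j \;\in\; \ell_p^n \otimes \ell_p^n, \]
where $(g_{ij})$ are i.i.d.\ $N(0,1)$. I would first produce upper bounds on the expected injective norms in $\ell_p^n\check\otimes\ell_p^n$ and in $\ell_{p'}^n\check\otimes\ell_{p'}^n$ (with $1/p+1/p'=1$) via Chevet's theorem (Theorem \ref{chevet}). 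Since for $1<p<2$ one has $\|\mathrm{id}\|_{\ell_2^n\to\ell_p^n}=n^{1/p-1/2}$ and $\mathbb E\|\sum_i g_i\mathbf e_i\|_{\ell_p^n}\asymp n^{1/p}$, Chevet's bound yields
\[ \mathbb E\|z\|_{\ell_p^n\check\otimes\ell_p^n} \;\leqslant\; C\, n^{2/p-1/2}. \]
On the dual side $p'>2$, one has $\|\mathrm{id}\|_{\ell_2^n\to\ell_{p'}^n}=1$ and $\mathbb E\|\sum_i g_i\mathbf e_i\|_{\ell_{p'}^n}\asymp n^{1/p'}$, so
\[ \mathbb E\|z\|_{\ell_{p'}^n\check\otimes\ell_{p'}^n} \;\leqslant\; C\, n^{1-1/p}. \]

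Next I would extract a lower bound on $\mathbb E\|z\|_{\ell_p^n\hat\otimes\ell_p^n}$ from trace duality. Viewing $z$ also as an element of $\ell_{p'}^n\otimes\ell_{p'}^n$ and using the duality $(\ell_p^n\hat\otimes\ell_p^n)^*\cong\ell_{p'}^n\check\otimes\ell_{p'}^n$, one has the pointwise inequality
\[ \sum_{i,j=1}^n g_{ij}^2 \;=\; \langle z, z\rangle \;\leqslant\; \|z\|_{\ell_p^n\hat\otimes\ell_p^n}\,\|z\|_{\ell_{p'}^n\check\otimes\ell_{p'}^n}. \]
Taking square roots, applying Cauchy--Schwarz to the expectation, and using the well-known bound $\mathbb E\bigl(\sum_{i,j=1}^n g_{ij}^2\bigr)^{1/2}\gtrsim n$, I would conclude
\[ \mathbb E\|z\|_{\ell_p^n\hat\otimes\ell_p^n} \;\geqslant\; \frac{c\, n^2}{\mathbb E\|z\|_{\ell_{p'}^n\check\otimes\ell_{p'}^n}} \;\geqslant\; c'\, n^{1+1/p}. \]

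Finally, to convert the two expectation bounds into a bound on the ratio for a single realization (which is what the supremum defining $\rho$ needs), I would employ the standard concentration/Paley--Zygmund argument used in \cite{XOR games and GT}: Gaussian concentration gives $\|z\|_{\check\otimes}\leqslant 2\mathbb E\|z\|_{\check\otimes}$ with high probability, and the Paley--Zygmund inequality gives $\|z\|_{\hat\otimes}\geqslant \tfrac{1}{2}\mathbb E\|z\|_{\hat\otimes}$ with positive probability. Their intersection is non-empty, providing a realization with
\[ \rho(\ell_p^n,\ell_p^n) \;\geqslant\; \frac{\|z\|_{\ell_p^n\hat\otimes\ell_p^n}}{\|z\|_{\ell_p^n\check\otimes\ell_p^n}} \;\geqslant\; c''\,\frac{n^{1+1/p}}{n^{2/p-1/2}} \;=\; c''\, n^{3/2-1/p}, \]
as desired. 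The main delicate point is the last step: one must justify that the two ``good'' concentration events occur simultaneously with positive probability, which is precisely the Paley--Zygmund plus Gaussian-concentration argument already invoked in \cite{XOR games and GT}; the Chevet bounds and the duality pairing themselves are routine once the basis is fixed.
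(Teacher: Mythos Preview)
Your proposal is correct and follows essentially the same approach as the paper: the paper also uses the Gaussian tensor $z=\sum_{i,j}g_{ij}\mathbf e_i\otimes\mathbf e_j$, bounds $\mathbb E\|z\|_{\ell_p^n\check\otimes\ell_p^n}$ and $\mathbb E\|z\|_{\ell_{p'}^n\check\otimes\ell_{p'}^n}$ via Chevet (with Kahane's inequality for the $\ell_p$-norm estimate), applies the same trace-duality/Cauchy--Schwarz step to lower-bound $\mathbb E\|z\|_{\ell_p^n\hat\otimes\ell_p^n}$, and then invokes the concentration argument from \cite{XOR games and GT}. Your writeup is in fact slightly more explicit about the last step than the paper's ``arguing as before''.
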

 \begin{proof}
     Consider a random tensor of the form $z=\sum_{i,j==1}^{n}g_{ij}\mathbf{e}_i\otimes\mathbf{e}_j$ where $g_{ij}$'s are iid $N(0,1).$ 
     By Kahane's inequality, there exists a constant $C_p>0,$ such that
     \[\mathbb{E}\Big\|\sum_{i=1}^{n}g_i\mathbf{e}_i\Big\|_{\ell_p^{n}}\leqslant C_p\Big(\mathbb{E}\Big\|\sum_{i=1}^{n}g_i\mathbf{e}_i\Big\|_{\ell_p^{n}}^p\Big)^{\frac{1}{p}}=C_p\Big(\sum_{i=1}^{n}\mathbb{E}|g_i|^p\Big)^{\frac{1}{p}}=D_pn^\frac{1}{p}.\] 
     In above the last equality, $D_p=(\mathbb{E}|g|^p)^{\frac{1}{p}}\simeq (\int_{0}^\infty t^pe^{-t^2}dt)^{\frac{1}{p}}<\infty.$
Then proceeding as in Proposition \ref{geqnclp}, by Chevet's theorem (Theorem \ref{chevet}), we obtain that 
    \[\mathbb{E}\|z\|_{\ell_p^{n}\check{\otimes}\ell_p^{n}}\leqslant \begin{cases}
                        D_p n^{\frac{1}{p}} , \text{for}\ p\geqslant 2; \\
                        D_p n^{\frac{2}{p}-\frac{1}{2}}, \ \text{for}\ p<2.
                    \end{cases}\]
     Note that by proceeding as in Proposition \ref{geqnclp}, we have the following inequality
     \[\mathbb{E}\Big(\sum_{i,j=1}^ng_{ij}^2\Big)^{\frac{1}{2}}\leqslant \sqrt{\mathbb{E}\|z\|_{\ell_p^{n}\hat{\otimes}\ell_p^{n}}}\sqrt{\mathbb{E}\|z\|_{\ell_{p^\prime}^{n}\check{\otimes}\ell_{p^\prime}^{n}}}.\] 
     Therefore, we get that $\mathbb E\|z\|_{\ell_p^{n}\hat{\otimes} \ell_p^{n}}\geqslant cn^{{2}-\frac{1}{p^\prime}}.$ Arguing as before, we have $\rho(\ell_p^{n},\ell_p^{n})\geqslant Cn^{\frac{3}{2}-\frac{1}{p}}.$
     This completes the proof of the proposition.
 \end{proof}
  \begin{proposition}
     For $1<p < 2$, there exists a constant $K>0$ such that
     $$\rho(\ell_p^{n},\ell_p^{n})\leqslant Kn^{\frac{1}{2}+\frac{1}{p'}}, \quad \forall n\geqslant 1.$$ 
 \end{proposition}
 \begin{proof}
     Let $z\in \ell_p^{n}\otimes\ell_p^{n} $ be such that $\|z\|_{\ell_p^{n}\check{\otimes}\ell_p^{n}}=1.$ Suppose $\widetilde z:\ell_{p'}^{n}\to\ell_p^{n}$ denotes the linear map associated with $z.$ By $L_p$-Grothendieck theorem \cite{Maurey}, there exists a positive linear functional $\varphi$ on $\ell_{p'/2}^n$ such that  
     $$|\widetilde{z}(x)\|_{\ell_p^{n}}\leqslant K(\phi(x^2))^{\frac{1}{2}}$$ 
     for all $x\in\ell_{p'}^{n}.$ The constant $K$ depends on the cotype constants of $\ell_{p}$ and $\ell_{p'}.$ By identifying $\varphi$ with  $\sum_{j=1}^n\lambda_j\mathbf{e}_j,$ where $
(\mathbf{e}_j)_{j=1}^n$ is standard basis, we have $z=\sum_{i=1}^n\mathbf{e}_i\otimes \widetilde{z}(\mathbf{e}_i),$ since $(\mathbf{e}_j)_{j=1}^n$ is a biorthogonal system. Thus, we obtain the estimate
\[\|z\|_{\ell_p^{n}\hat{\otimes}\ell_p^{n}}\leqslant K\Big(\sum_{j=1}^n\sqrt{\lambda_j}\Big).\] 
Note that we must have $\sum_{j=1}^n\lambda_j^{\big(\frac{p^\prime}{2}\big)'}=1.$  Therefore, we must have
$\|z\|_{\ell_p^{n}\hat{\otimes}\ell_p^{n}}\leqslant Kn ^{\frac{1}{2}+\frac{1}{p'}}.$
This completes the proof.
\end{proof}
 \begin{corollary}
    For $1\leqslant p<2$, there exist positive constants $c$ and $C$, depending only on $p,$ such that 
    \[cn^{\frac{3}{2}-\frac{1}{p}}\leqslant K_G(\ell_p^{n},\ell_p^{n})\leqslant Cn^{\frac{3}{2}-\frac{1}{p}},\quad \forall n\geqslant 1.\]
 \end{corollary}
\section{The positive Grothendieck constant $K^{+}_G(E,F)$} \label{S5}
A finite-dimensional Banach space $E$ may possess several interesting properties relative to taking different tensor products of $E$ with itself. Two such properties were studied in \cite{BM}. 

First, we introduce a quantitative variant of ``Property P" introduced in \cite{BM}. We use the natural notion of positivity for elements of $E \otimes E$, namely, an element $A$ of $E \otimes E$ is positive (written $A \geq 0$ ) if it is in the convex hull of the set of symmetric tensors $x \otimes x, x \in E$. Given a finite-dimensional complex Banach space $E$, let 
\[\gamma(E):= \inf \{c: \langle A, B\rangle_{\text{HS}} \leqslant c\|A\|_{E\widecheck{\otimes}E} \| B \|_{E^*\widecheck{\otimes} E^*},\,\, A \in E \otimes E,\,B \in E^* \otimes E^*,A \geqslant 0, B \geqslant 0\},\] where $\langle \cdot, \cdot \rangle_{\text{HS}}$ is the Hilbert-Schmidt inner product. Then, in the terminology of \cite{BM}, $E$ is said to possess ``Property P'' if $\gamma(E) \leq 1$. 

We now connect the constant $\gamma(E)$ to the positive Grothendieck constant $K_G^+(E,F)$ defined in \eqref{Definition-KG+}. The proof of the third equality in the Proposition below is essentially the same as that of \cite[Fact 2]{BM}. It is reproduced here for the sake of completeness.



\begin{proposition} \label{++++}
For any finite-dimensional Banach space $E$, we have \[\gamma(E)=K_G^+(E,\ell_2^{\text{dim}\,E})=\sup\{K_G^+(E,\ell_2^n):n\geqslant 1\}=\|id\|^2_{\Pi_2(E,\ell_2)\to B(E,\ell_2)}.\]
\end{proposition}
\begin{proof}
Suppose $\dim E=m.$  Let $A:E\to E^*$ and $B:E^*\to E$ be positive operators. Note that $B=C^*C$ for some linear map $C:E^*\to \ell_2^m$ and $A=\beta^*\beta$ for some $\beta:E\to\ell_2^m.$ it follows that 
\begin{eqnarray*}
|\text{tr}(BA)| &=& |\text{tr}(C^*CA)|\leqslant  N(C^*CA)\leqslant  
\|C\|^2 \|A\| K_G^+(E,\ell_2^m)=\|A\| \|B\| K_G^+(E,\ell_2^m).
\end{eqnarray*}
Thus we have $\gamma(E)\leqslant K_G^+(E,\ell_2^m).$
For the other side inequality, 
note that
\begin{align*}
\sup\big \{N(BA):\|B\|_{E^*\to \ell_2^m}\leqslant 1 \big\}&= \sup\big \{ |\text{tr}(CB\beta^*\beta)| : \|B\|_{E^*\to \ell_2^m}\leqslant 1,\, \|C\|_{\ell_2^m \to E}\leqslant 1 \big \}\\
&=\sup\big \{  |\text{tr}(B\beta^*\beta C)|  : \|B\|_{E^*\to \ell_2^m}\leqslant 1,\, \|C\|_{\ell_2^m \to E}\leqslant 1 \big \}\\
&=  \sup\big \{ |\text{tr}((\beta B^*)^*(\beta C))| : \|B\|_{E^*\to \ell_2^m}\leqslant 1,\, \|C\|_{\ell_2^m \to E}\leqslant 1 \big \}\\ 
&\leqslant   \sup\big \{  \|B\beta^*\|_2\|\beta C\|_2 : \|B\|_{E^*\to \ell_2^m}\leqslant 1,\, \|C\|_{\ell_2^m \to E}\leqslant 1 \big \}\\ 
&=\sup \big \{\|B\beta^*\|_2^2 :\|B\|_{E^*\to \ell_2^m}\leqslant 1\big\}\\
&= \sup\{|\text{tr}(DA)|: D\geqslant 0,\ \|D\|_{E^* \to E}\leqslant 1\}.
\end{align*} 
Now, taking supremum over all positive operators $A:E\to E^*,$ with $\|A\|_{E\to E^*}\leq 1,$ we get $K_G^+(E,\ell_2^m)\leqslant \gamma(E).$ This completes the proof of the first equality in the proposition. 

 The second equality is evident.

To verify the third equality, let $x_1,\dots,x_k\in E$ and $u\in B(E,\ell_2).$ Define an operator $T:\ell_2^k\to E$ by $Te_i=x_i,$ where $1\leqslant i\leqslant k$ and $e_i$'s are canonical basis of $\ell_2^k$. Note that $T^*(x^*)=\sum_{i=1}^kx^*(x_i)e_i$ for any $x^*\in E^*.$ 
Therefore, $\|T\|_{\ell_2^k\to E}=\|T^*\|_{E^*\to\ell_2^k}\leqslant 1$ is equivalent to the condition $\sum_{i=1}^k|x^*(x_i)|^2\leqslant 1$ for all $x^*\in (E^*)_1.$  
Also, note that \[\sum_{i=1}^k\|u(x_i)\|_{2}^2=\sum_{i=1}^k\|uT(e_i)\|_{2}^2=\langle u^*u, TT^*\rangle_{HS}.\] 
Therefore, by denoting $S=TT^*,$ we conclude that 
\[\pi_2(u)^2=\sup\big\{\langle u^*u, S\rangle: S\geqslant 0,\, \|S\|_{E^*\to E}\leqslant 1\big\}.\] 
Hence we get $\gamma(E)=\|id\|^2_{\Pi_2(E,\ell_2)\to B(E,\ell_2)}.$
\end{proof}
Recall that positive  variant of the Grothendieck inequality asserts that $\sup_{n\in\mathbb N}\gamma(\ell_\infty^n)<\infty.$ In fact, $\sup_{n\in\mathbb N}\gamma(\ell_\infty^n)=\frac{\pi}{2}.$ This motivates the following definition.
\begin{definition}[Little G. T. flag]
Let $(E_n)_{n\in\mathbb N}$ be a sequence of finite-dimensional Banach spaces with the property that
\begin{itemize}
    \item[H1:] $\dim(E_n)=n$ for all $n\in\mathbb N.$ 
    \item[H2:] there exists an isometry $j_n:E_n\to E_{n+1}$ for each $n\in\mathbb N$. 
\end{itemize}
We say $(E_n,j_n)_{n\in\mathbb N}$ is a Little G.T. flag if 
$\sup\{\gamma(E_n): n\in\mathbb N\}$ is finite.
\end{definition}
\begin{remark}\label{normalin}
Let $(E_n)_{n\geqslant 1}$ be a sequence of Banach spaces 
satisfying (H1) and (H2).
We discuss the inductive limit of $(j_n,E_n)_{n\geqslant 1}$: Consider the subspace of $\prod_{n\geqslant 1}E_n$ formed by sequences $(x_n)_{n\geqslant 1}$ with $j_nx_n=x_{n+1}$ for all $n$ large. We can set $\|x\|:=\lim\limits_{n\to\infty}\|x_n\|_{E_n}.$ 
Clearly, this defines a seminorm. After taking quotient by the subspace  $\{x:\|x\|=0\}$ and taking closure, we obtain a Banach space which is denoted by $E^{\text{ind}}.$ We have a canonical isometric inclusion of $E_n$ into $E^{\text{ind}}$ for all $n\geqslant 1.$ Under this identification, we may assume that $E_n\subseteq E_{n+1}$ for all $n\in\mathbb N$ and $E^{\text{ind}}=\overline{\cup_{n=1}^\infty E_n}.$ Without loss of generality, from now on we may assume that the maps $j_n$'s are all inclusion maps. 
\end{remark}
\begin{definition}[Hilbert-Schmidt space, \cite{Jarchow82}]
A Banach space $E$ is called a Hilbert-Schmidt space if any bounded linear operator $u:E\to\ell_2$ is a $2$-summing operator, i.e., the identity operator $\operatorname{id}:\Pi_2(E,\ell_2)\to B(E,\ell_2)$ is an isomorphism.
\end{definition}
The proof of the following proposition is an adaption of \cite[Fact 2]{BM}.
\begin{proposition} \label{prop:5.5}
Let $E$ be a Banach space and $(E_n)_{n\in\mathbb{N}}$ be a sequence of Banach spaces satisfying $E_n\subseteq E_{n+1}$ for all $n\geq 1$ and $E=\overline{\cup_{n=1}^\infty E_n}.$ Moreover, assume that for all $n\geq 1$ there are projections $P_n:E\to E_n\subseteq E$ with $\sup_{n\geq 1}\|P_n\|<\infty.$ Then  $(E_n,j_n)_{n\in\mathbb N}$ is a Little G.T. flag if and only if $E$ is a Hilbert-Schmidt space where $j_n:E_n\to E_{n+1}$ is the canonical inclusion map.
\end{proposition}
\begin{proof}
We first prove the `if' part. Note that for $u\in B(E_n,\ell_2),$ we can define a bounded linear operator $\tilde{u}:E\to\ell_2$  as $\tilde{u}=uP_n.$ Using \cite[Chapter 1, Page 9]{Pisier-factorization} and the hypothesis that $E$ is a Hilbert-Schmidt spae, we get a constant $C$ (independent of $n$ and $u$) such that $\pi_2(u)\leq\pi_2(\tilde{u})\leq C\|u\|_{E_n\to\ell_2}.$ 
By Proposition \ref{++++}, we get
 $\gamma(E_n)\leq \sqrt{C}$ for all $n\geq 1.$

To prove the `only if' part we first observe that from hypothesis that $(E_n,j_n)_{n\in\mathbb N}$ is a Little G.T. flag and Proposition \ref{++++}, there exists a constant $A>0$ (independent of $n$) such that $\pi_2(v)\leq A\|v\|$ for all $v\in B(E_n,\ell_2).$ We now show that $\pi_2(u)\lesssim\|u\|$ for all $u\in B(E,\ell_2).$ We will be done by proving $\pi_2(u)\lesssim \sup_{n\geq 1}\pi_2(u{|}_{E_n})\lesssim\sup_{n\geq 1}\|u{|}_{E_n}\|\lesssim\|u\|_E.$ Note that in view of the previous estimate, we only need to prove that $\pi_2(u)\lesssim \sup_{n\geq 1}\pi_2(u{|}_{E_n}).$ Take $x_1,\dots,x_k\in E$ and consider $a_1^n,\dots,a_k^n\in E_n$ such that $\|x_i-a_i^n\|<\frac{1}{n}$ for all $1\leq i\leq k.$  We have that 
\begin{equation}\label{pi2ine}
    \Big(\sum_{i=1}^k\|u(a_i^n)\|^2\Big)^{\frac{1}{2}}\leq \sup_{n\geq 1}\pi_2(u{|}_{E_n})\sup\Big\{\Big(\sum_{i=1}^n|x^*(a_i^n)|^2\Big)^{\frac{1}{2}}:\|x^*\|_{E_n^*}\leq 1\Big\}.
\end{equation}
By the Hahn-Banach theorem, one can see that
\[\sup\Big\{\Big(\sum_{i=1}^n|x^*(a_i^n)|^2\Big)^{\frac{1}{2}}:\|x^*\|_{E_n^*}\leq 1\Big\}=\sup\Big\{\Big(\sum_{i=1}^n|x^*(a_i^n)|^2\Big)^{\frac{1}{2}}:\|x^*\|_{E^*}\leq 1\Big\}.\]
Hence the desired inequality follows if we replace right-hand side of inequality \eqref{pi2ine} with the above equality and taking $n\to\infty.$ This completes the proof of the lemma.
\end{proof}
\begin{remark}\label{nxampl}
Note that the proof of the above proposition reveals that  the inductive limit of a sequence of G.T. flag has to be a Hilbert-Schmidt space. A large class of examples of Hilbert-Schmidt spaces are mentioned in \cite{Jarchow82}.
\end{remark}
\begin{lemma}
For any Banach space $F$, we have $K_G^+(\ell_2^n,F)=\rho(\ell_2^n,F).$
\end{lemma}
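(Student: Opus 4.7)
The plan is to mimic the proof of part (iii) of Lemma \ref{basic-properties-KG}, noting that the test operator used there, namely the identity on $\ell_2^n$, is in fact a non-negative tensor and therefore admissible in the definition of the \emph{positive} Grothendieck constant.

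First I would establish the lower bound $K_G^+(\ell_2^n,F)\geqslant \rho(\ell_2^n,F)$. The key observation is that $\operatorname{id}_{\ell_2^n}\colon \ell_2^n\to (\ell_2^n)^*$ is non-negative in the sense of the paper: identifying $\ell_2^n$ with its dual via the standard inner product, $\operatorname{id}_{\ell_2^n}$ corresponds to $\sum_{i=1}^n \mathbf{e}_i\otimes \mathbf{e}_i$, which is manifestly of the form $BB^*$ (with $B=\operatorname{id}$). Since $\|\operatorname{id}_{\ell_2^n}\|_{\ell_2^n\to (\ell_2^n)^*}=1$, this operator is admissible in the supremum defining $K_G^+(\ell_2^n,F)$, and plugging it in directly gives
\[
K_G^+(\ell_2^n,F)\;\geqslant\;\|\operatorname{id}_{\ell_2^n}\otimes \operatorname{id}_F\|_{\ell_2^n\check{\otimes}F\to \ell_2^n\hat{\otimes}F}\;=\;\rho(\ell_2^n,F).
\]

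For the reverse inequality I would simply chain the trivial bound $K_G^+\leqslant K_G$ with the equality already obtained in Lemma \ref{basic-properties-KG}(iii):
\[
K_G^+(\ell_2^n,F)\;\leqslant\;K_G(\ell_2^n,F)\;=\;\rho(\ell_2^n,F).
\]
Combining the two bounds yields the desired equality. There is essentially no obstacle here; the only subtlety worth flagging is the verification that the identity on a finite-dimensional Hilbert space is non-negative in the tensorial sense used throughout the paper, which is immediate from the $A=BB^*$ description given just before the definition of $K_G^+$. Hence the argument is short and self-contained once Lemma \ref{basic-properties-KG}(iii) is in hand.
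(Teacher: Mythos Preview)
Your argument is correct and is exactly the paper's approach spelled out in detail: the paper's one-line proof simply observes that the identity on $\ell_2^n$ is a positive operator attaining the supremum, which is precisely your lower bound plus the chain $K_G^+\leqslant K_G=\rho$ via Lemma~\ref{basic-properties-KG}(iii). The only minor point to note is that Lemma~\ref{basic-properties-KG}(iii) is stated for finite-dimensional $F$, whereas the present lemma allows arbitrary $F$; however, the proof of part~(ii) (and hence~(iii)) of that lemma goes through verbatim for general $F$, so there is no genuine gap.
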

\begin{proof}
As the supremum in the extremum of $K_G^+(\ell_2^n,F)$ is attained at the identity operator, which is also a positive operator, it follows that $K_G^+(\ell_2^n,F)=\rho(\ell_2^n,F).$
\end{proof}
For a finite-dimensional Banach space $E,$ we define \[\rho^{+}(E):=\sup\{\|z\|_{E\hat{\otimes}E}:\|z\|_{E\check{\otimes}E}\leqslant 1,\ z\geqslant 0\}.\]

\begin{lemma}
$K_G^+(\ell_\infty^n,\ell_\infty^n)\geqslant \rho^+(\ell_1^n).$
\end{lemma}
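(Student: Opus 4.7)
The plan is to reduce $\rho^+(\ell_1^n)$ to a nuclear-norm bound involving a positive operator $\ell_\infty^n \to \ell_1^n$, then invoke the positive analogues of Lemma~\ref{Pisier-Nuclear} and Lemma~\ref{basic-properties-KG}(i) to re-express that bound in terms of $K_G^+(\ell_\infty^n, \ell_\infty^n)$.

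First, observe that a positive tensor $z \in \ell_1^n \otimes \ell_1^n = (\ell_\infty^n)^* \otimes (\ell_\infty^n)^*$ corresponds canonically to a positive linear operator $\tilde z\colon \ell_\infty^n \to \ell_1^n$, with $\|z\|_{\ell_1^n \check{\otimes} \ell_1^n} = \|\tilde z\|_{\ell_\infty^n \to \ell_1^n}$ and $\|z\|_{\ell_1^n \hat{\otimes} \ell_1^n} = N(\tilde z)$. So $\rho^+(\ell_1^n)$ equals the supremum of $N(\tilde z)$ taken over positive $\tilde z$ with $\|\tilde z\| \leqslant 1$.

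Next, I would note that the proof of Lemma~\ref{Pisier-Nuclear} goes through verbatim with the supremum restricted to positive $A$ (the key point being that a positive tensor in $E^* \otimes E^*$ has a symmetric matrix representation, so $A^t = A$), yielding the statement that $K_G^+(E, F)$ equals the best constant $C$ such that $N(BA) \leqslant C \|A\| \|B\|$ for all positive $A\colon E \to E^*$ and all $B\colon E^* \to F$. Applying this with $E = \ell_\infty^n$, $F = \ell_1^n$ and $B = \operatorname{id}_{\ell_1^n}$ gives $N(A) \leqslant K_G^+(\ell_\infty^n, \ell_1^n) \|A\|$ for every positive $A\colon \ell_\infty^n \to \ell_1^n$. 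Taking the supremum over $\|A\|\leqslant 1$ yields $\rho^+(\ell_1^n) \leqslant K_G^+(\ell_\infty^n, \ell_1^n)$.

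Finally, I would establish the identity $K_G^+(\ell_\infty^n, \ell_1^n) = K_G^+(\ell_\infty^n, \ell_\infty^n)$, which is the positive analogue of Lemma~\ref{basic-properties-KG}(i). The argument given there uses only the duality $(E \check{\otimes} F)^* \cong E^* \hat{\otimes} F^*$, the identity $(A \otimes \operatorname{id}_F)^* = A^* \otimes \operatorname{id}_{F^*}$, and $\|A\| = \|A^*\|$; for positive $A$ one additionally has $A^* = A$ (the matrix is symmetric), so the argument survives verbatim under the restriction to positive tensors. Chaining these three observations gives the claim. The one step that merits care is verifying that both Lemma~\ref{Pisier-Nuclear} and Lemma~\ref{basic-properties-KG}(i) specialize to the positive setting, and once the symmetry $A^* = A$ for positive $A$ is acknowledged the remainder is purely formal.
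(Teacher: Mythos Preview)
Your proposal is correct and follows essentially the same route as the paper: both use the positive analogue of Lemma~\ref{basic-properties-KG}(i) to identify $K_G^+(\ell_\infty^n,\ell_\infty^n)=K_G^+(\ell_\infty^n,\ell_1^n)$, then invoke the positive version of Lemma~\ref{Pisier-Nuclear} and specialize to $B=\operatorname{id}_{\ell_1^n}$ to obtain the bound $\rho^+(\ell_1^n)$. You are simply more explicit than the paper in justifying why these lemmas carry over to the positive setting.
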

\begin{proof}
In view of Lemma \ref{Pisier-Nuclear}, taking $B=\operatorname{id}$ in \eqref{Nuclear}, we get,
\begin{eqnarray*}
K_G^+(\ell_\infty^n,\ell_\infty^n)= K_G^+(\ell_\infty^n,\ell_1^n)&=& \sup \{N(BA^*): A\geqslant 0,\, \|A\|_{\ell_\infty^n\to \ell_1^n}\leqslant 1,\, \|B\|_{\ell_1^n \to\ell_1^n}\leqslant 1\}\\
&\geqslant &\sup\{\|A\|_{\ell_1^n\hat{\otimes}\ell_1^n}: \|A\|_{\ell_\infty^n \to\ell_1^n}\leqslant 1, \ A\geqslant 0\}\\
&=&\rho^{+}(\ell_1^n).
\end{eqnarray*}
This completes the proof of the lemma.
\end{proof}
\begin{theorem}\label{rho+-1-1}
There exists a constant $c>0$ such that for large $n,$ $\rho^+(\ell_1^n)\geqslant c\sqrt{n}.$
\end{theorem}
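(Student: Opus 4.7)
The plan is to exhibit a single random positive semi-definite matrix $Z$ on $\mathbb{R}^n$ whose associated tensor $z\in\ell_1^n\otimes\ell_1^n$ witnesses the desired lower bound. For a tensor $z=\sum_{i,j}Z_{ij}\mathbf{e}_i\otimes\mathbf{e}_j\in\ell_1^n\otimes\ell_1^n$, positivity $z\geqslant 0$ in the sense used in this paper corresponds exactly to $Z$ being a (real) symmetric PSD matrix. Under this identification, since $\ell_1^n\hat{\otimes}\ell_1^n=\ell_1(n\times n)$ isometrically, we have $\|z\|_{\ell_1^n\hat{\otimes}\ell_1^n}=\sum_{i,j}|Z_{ij}|$, while $\|z\|_{\ell_1^n\check{\otimes}\ell_1^n}=\|Z\|_{\ell_\infty^n\to\ell_1^n}$. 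Hence it suffices to produce a PSD $Z$ with $\sum_{i,j}|Z_{ij}|/\|Z\|_{\ell_\infty^n\to\ell_1^n}\gtrsim\sqrt{n}$.

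Let $V$ be the $n\times n$ matrix with iid standard real Gaussian entries, denote its rows by $v_1,\dots,v_n$, and set $Z:=VV^T$, so that $Z_{ij}=\langle v_i,v_j\rangle$ and $Z$ is PSD by construction. For the projective norm, drop the diagonal and use
\[
\sum_{i,j}|Z_{ij}|\;\geqslant\;\sum_{i\neq j}|\langle v_i,v_j\rangle|.
\]
Conditionally on $v_j$, $\langle v_i,v_j\rangle\sim N(0,\|v_j\|^2)$, and since $\|v_j\|^2\sim\chi_n^2$ gives $\mathbb{E}\|v_j\|\gtrsim\sqrt{n}$, one obtains $\mathbb{E}\sum_{i\neq j}|\langle v_i,v_j\rangle|\gtrsim n^{5/2}$. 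Using $\mathbb{E}|\langle v_i,v_j\rangle|^2=n$ for $i\neq j$ together with Cauchy--Schwarz across the $n(n-1)$ pairs yields $\mathbb{E}\bigl(\sum_{i\neq j}|\langle v_i,v_j\rangle|\bigr)^2\lesssim n(n-1)\cdot n(n-1)\cdot n \lesssim n^5$. The Paley--Zygmund inequality then gives $\sum_{i\neq j}|\langle v_i,v_j\rangle|\geqslant c n^{5/2}$ with some constant probability $\delta>0$.

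For the injective norm, chain the standard inclusions $\ell_1^n\hookrightarrow\ell_2^n\hookrightarrow\ell_\infty^n$ to obtain
\[
\|Z\|_{\ell_\infty^n\to\ell_1^n}\;\leqslant\;\sqrt{n}\,\|Z\|_{\ell_\infty^n\to\ell_2^n}\;\leqslant\;n\,\|Z\|_{\ell_2^n\to\ell_2^n}\;=\;n\,\|V\|_{\mathrm{op}}^2.
\]
By standard Gaussian matrix concentration, $\|V\|_{\mathrm{op}}\leqslant 3\sqrt{n}$ with probability at least $1-e^{-cn}$, so $\|Z\|_{\ell_\infty^n\to\ell_1^n}\leqslant Cn^2$ on this event. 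Intersecting with the event from the previous paragraph retains positive probability for all large $n$, and on the intersection the ratio $\|z\|_\wedge/\|z\|_\vee$ is $\gtrsim n^{5/2}/n^2=\sqrt{n}$, giving the desired lower bound for $\rho^+(\ell_1^n)$.

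The main obstacle is the projective-norm lower bound: a priori it is only a statement about the mean of $\sum_{i\neq j}|\langle v_i,v_j\rangle|$, whereas we need it pointwise (or at least with probability exceeding $e^{-cn}$ to combine with the operator-norm event). Direct Lipschitz-style Gaussian concentration is awkward because the function is not globally Lipschitz in $V$; the Paley--Zygmund route bypasses this but forces us to accept only a constant probability, which fortunately is enough once paired with an exponentially strong upper bound on $\|V\|_{\mathrm{op}}$.
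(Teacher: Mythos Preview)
Your argument is correct, but it takes a genuinely different route from the paper. The paper exhibits an \emph{explicit} positive matrix: it takes $B_n$ to be the real part of the $n\times n$ discrete Fourier transform matrix (normalized so that $\|B_n\|_{\ell_2^n\to\ell_2^n}\leqslant 1$), sets $A_n=B_n+I\geqslant 0$, observes $\|A_n\|_{\ell_\infty^n\to\ell_1^n}\leqslant 2n$, and then invokes an external estimate (due to Pinelis) that $\sum_{j,k}|\cos(2\pi jk/n)|$ is of order $n^{3/2}$ to conclude $\sum_{j,k}|a_{jk}|\gtrsim n^{3/2}$, whence the ratio is $\gtrsim\sqrt{n}$. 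Your approach is probabilistic: you take a random Gram matrix $Z=VV^{T}$ with i.i.d.\ Gaussian $V$, obtain the numerator lower bound $\sum_{i\neq j}|\langle v_i,v_j\rangle|\gtrsim n^{5/2}$ with constant probability via Paley--Zygmund (the second-moment bound $\mathbb{E}S^2\lesssim n^5$ is fine, as is the first-moment bound $\mathbb{E}S\gtrsim n^{5/2}$), and control the denominator by $\|Z\|_{\ell_\infty^n\to\ell_1^n}\leqslant n\|V\|_{\mathrm{op}}^2\leqslant Cn^2$ with probability $1-e^{-cn}$ from standard Gaussian matrix concentration; the union bound then leaves positive probability on the intersection for large $n$. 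What each buys: the paper's construction is explicit and avoids any probability, but relies on a nontrivial asymptotic for the $\ell_1$ mass of the real DFT matrix; your argument is self-contained modulo standard Gaussian facts and would adapt more readily to other spaces, at the cost of being nonconstructive.
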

\begin{proof}
Consider the sequence of matrices $B_n=(b_{jk})_{j,k=0}^{n-1}$ with 
\begin{eqnarray*}
b_{jk}=\sqrt{\frac{1}{n}}cos \big(\frac{2\pi jk}{n}\big), \quad j,k=0,\ldots, n-1.
\end{eqnarray*}  
Since $B_n$ is nothing but the real part of discrete Fourier transform matrix, the operator $B_n:\ell_2^n\to \ell_2^n$ has norm atmost $1$ for all $n\in\mathbb N.$ It follows that $B_n:\ell_\infty^n \to \ell_1^n$ is of norm atmost $n$ for each $n\in\mathbb N$. 
Moreover, for large $n$, the quantity $\sum_{j,k=1}^{n}|b_{jk}|$ is of order $n^{3/2}$ \cite{Pinelis}. 
Since $B_n$ is real symmetric matrix and is an operator on $\ell_2^n$ of norm atmost $1$, it follows that spectrum of $B_n$ is contained in $[-1,1].$ Thus $A_n:=B_n+I$ is a positive operator and 
$$\|A_n\|_{\ell_\infty^n\to \ell_1^n}\leqslant \|B_n\|_{\ell_\infty^n\to \ell_1^n}+\|I\|_{\ell_\infty^n\to \ell_1^n}\leqslant 2n.$$  
On the other hand $\sum_{j,k=0}^{n-1}|a_{jk}|$ is still at least of order $n^{3/2},$ where $a_{jk}$ is the $(j,k)$ entry of $A_n$. Choosing $A_n$ as above, by definition,  we have
\begin{eqnarray*}
\rho^+(\ell^1(n),\ell^1(n)) &=& \sup_{X\geqslant 0}\frac{\|X\|_{\ell^1(n)\hat{\otimes}\ell^1(n)}}{\|X\|_{\ell^1(n)\check{\otimes}\ell^1(n)}}\\
&=& \sup_{X\geqslant 0}\frac{\|X\|_{\ell^1(n)\hat{\otimes}\ell^1(n)}}{\|X\|_{\ell^\infty(n)\to \ell^1(n)}}\\
&\geqslant & \frac{\sum_{j,k=0}^{n-1} |a_{jk}|}{\|A\|_{\ell^\infty(n)\to \ell^1(n)}}\\
&\geqslant & \frac{cn^{3/2}}{n}\\
&=& c\sqrt{n}.
\end{eqnarray*}
for some positive constant $c$ independent of $n.$
This completes the proof of the lemma.
\end{proof}
\begin{remark}
The example of matrix $B_n$ also plays a vital role in the context of Grothendieck inequality and von Neumann inequality (see \cite{Blei} and \cite{Ton78}). Some of the properties of this matrix that we have used are also in \cite{Ton78}. In fact, it is not hard to see from \cite{Ton78} that the absolute sum of coefficients of either real or imaginary part of discrete Fourier transform matrix is of order $n^{3/2}.$ this information also suffices for the above proof to make work.
\end{remark}
The verification of the useful characterization of a non-negative contraction $A$ from $\ell_1^n$ to $\ell_\infty^n$, given below, follows from the observation that $\|A\|_{\ell_1^n\to\ell_\infty^n}=\max_{1\leqslant i,j\leqslant n}|a_{ij}|$. 
\begin{fact}\label{fact6.7}   Suppose that $A$ is a nonnegative linear map from $\ell_1^n$ to $\ell_\infty^n$. Then $\|A\|_{\ell_1^n\to\ell_\infty^n}\leqslant 1$ if and only if  there exists a finite-dimensional Hilbert space $\mathcal H$ and $v_1,\dots,v_n\in\mathcal H$ with $\|v_i\|_{\mathcal H}\leqslant 1$ for all $1\leqslant i\leqslant n$ such that $a_{ij}=\langle v_i,v_j\rangle$ for all $1\leqslant i\leqslant n.$
\end{fact}

\begin{proposition}\label{positive-1-infinity}
For all $n\geqslant 1$, we have $K_G^{+}(\ell_1^n,\ell_1^n)=K_G^{+}(\ell_1^n,\ell_\infty^n)\leqslant K_G.$
\end{proposition}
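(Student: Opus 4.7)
The plan is to reduce the statement to an application of the classical Grothendieck inequality \eqref{GI} by passing through nuclear norms, using the nice parametrization of non-negative contractions $\ell_1^n \to \ell_\infty^n$ given by Fact \ref{fact6.7}.

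First, I would establish the equality $K_G^{+}(\ell_1^n,\ell_1^n)=K_G^{+}(\ell_1^n,\ell_\infty^n)$, which is the positive analogue of Lemma \ref{basic-properties-KG}(i). The proof of that lemma goes through verbatim for the non-negative case, since taking the adjoint $A^*$ preserves non-negativity (indeed, if $A = BB^*$ then $A^* = BB^*$ in the symmetric real setting). So it remains to prove $K_G^{+}(\ell_1^n,\ell_\infty^n) \leqslant K_G$.

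Next, by the positive version of Lemma \ref{Pisier-Nuclear} (the same proof works with the additional restriction $A \geqslant 0$), it is enough to show that whenever $A \geqslant 0$ is a linear map $\ell_1^n \to \ell_\infty^n$ with $\|A\|_{\ell_1^n \to \ell_\infty^n} \leqslant 1$ and $B: \ell_\infty^n \to \ell_\infty^n$ satisfies $\|B\| \leqslant 1$, the nuclear norm of $BA$ (as an operator $\ell_1^n\to\ell_\infty^n$) obeys $N(BA) \leqslant K_G$.

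Here the key reduction is the following: by Fact \ref{fact6.7}, write $a_{ij} = \langle v_i, v_j\rangle$ with $\|v_i\|_{\mathcal H} \leqslant 1$ in a Hilbert space $\mathcal H$. Since $\|B\|_{\ell_\infty^n \to \ell_\infty^n} = \max_i \sum_k |b_{ik}| \leqslant 1$, the vectors $w_i := \sum_k b_{ik} v_k$ also lie in the unit ball of $\mathcal H$, and one computes $(BA)_{ij} = \langle w_i, v_j\rangle$. By trace duality,
\[
N(BA) = \sup\Big\{ |\operatorname{tr}(BAS)| : S:\ell_\infty^n\to\ell_1^n,\ \|S\|_{\ell_\infty^n\to\ell_1^n}\leqslant 1\Big\} = \sup \Big| \sum_{i,j} S_{ji}\langle w_i,v_j\rangle\Big|,
\]
and the transposed matrix $\tilde S_{ij}=S_{ji}$ again satisfies $\|\tilde S\|_{\ell_\infty^n\to\ell_1^n}\leqslant 1$. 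Applying the classical Grothendieck inequality \eqref{GI} to $\tilde S$ with the vectors $(w_i)$ and $(v_j)$ gives $|\sum \tilde S_{ij}\langle w_i,v_j\rangle| \leqslant K_G$, hence $N(BA)\leqslant K_G$, as required.

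The argument is essentially direct once the ingredients are in place; the only delicate point is book-keeping the indices and verifying that the positive versions of Lemma \ref{basic-properties-KG}(i) and Lemma \ref{Pisier-Nuclear} really do carry over without loss of the non-negativity hypothesis. This is the step I expect to need the most care, but it should cause no genuine obstacle because the manipulations in those lemmas (taking adjoints, re-writing injective/projective norms in terms of trace duals) all respect the cone of non-negative tensors.
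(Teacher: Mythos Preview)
Your proposal is correct and follows essentially the same route as the paper: reduce via (the positive version of) Lemma \ref{Pisier-Nuclear} to bounding $N(BA)$, invoke Fact \ref{fact6.7} to write $A=(\langle v_i,v_j\rangle)$, use trace duality, and finish with the classical Grothendieck inequality. The only cosmetic difference is that the paper first merges the two auxiliary contractions into a single $Z:\ell_\infty^n\to\ell_1^n$ and then applies Fact \ref{fact6.7}, arriving at $|\sum Z_{ij}\langle v_i,v_j\rangle|$, whereas you absorb $B$ into new vectors $w_i=\sum_k b_{ik}v_k$ and arrive at $|\sum \tilde S_{ij}\langle w_i,v_j\rangle|$; both expressions are bounded by $K_G$ for the same reason.
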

\begin{proof}
The first equality we have to prove is the positive variant of the equality in Lemma \ref{basic-properties-KG}(i) and is easy to verify.  Therefore, we only verify $K_G^{+} (\ell_1^n,\ell_\infty^n)\leqslant K_G$ below.

Appealing to Lemma \ref{Pisier-Nuclear}, we have
\begin{eqnarray*}
K_G^+(\ell_1^n,\ell_\infty^n) &=& \sup \{N(XA^*): A\geqslant 0,\, \|A\|_{\ell_1^n\to \ell_\infty^n}\leqslant 1,\, \|X\|_{\ell_\infty^n \to\ell_\infty^n}\leqslant 1\}\\\nonumber
&= &\sup\{|\operatorname{tr}(CXA^*)|: A\geqslant 0,\, \|A\|_{\ell_1^n\to \ell_\infty^n}\leqslant 1,\, \|X\|_{\ell_\infty^n \to\ell_\infty^n}\leqslant 1,\ \|C\|_{\ell_\infty^n\to\ell_1^n}\leqslant 1\}\\\nonumber
&=  &\sup\{|\operatorname{tr}(ZA^*)|:A\geqslant 0,\, \|A\|_{\ell_1^n\to \ell_\infty^n}\leqslant 1,\, \|Z\|_{\ell_\infty^n \to\ell_1^n}\leqslant 1\}\\\nonumber
&=&\sup\{|\sum_{i,j=1}^nz_{ij}\langle v_i,v_j\rangle|:\|Z\|_{\ell_\infty^n \to\ell_1^n}\leqslant 1,\ \|v_i\|_2\leqslant 1\}\leqslant K_G, 
\end{eqnarray*}
where $Z= \big (\!\!\big ( z_{ij}\big )\!\!\big )$. The last equality follows from Fact \ref{fact6.7}.
\end{proof}

\section{Applications}\label{adde}
In what follows, all Banach spaces are over the field of complex numbers. What is discussed below relates the positive Grothendieck constant to the complete norm bound of a certain class of homomorphisms (or, equivalently, a class of linear maps on finite-dimensional Banach spaces) first introduced by Parrott in \cite{SKP}. We denote the vector space of complex $n_1\times n_2$ matrices by $\mathbb{C}^{n_1\times n_2}.$

Let $\Omega$ be a bounded open connected subset of $\mathbb{C}^m$ and $H^\infty(\Omega)$ be the algebra of bounded holomorphic functions on $\Omega$. Suppose that $V:=(V_1, \ldots , V_m)$ is a $m$-tuple of linear transformations from $\mathbb{C}^{n_1}$ to $\mathbb{C}^{n_2}$, in other words, each $V_j$, $1\leq j \leq m$, is represented by a $n_1 \times n_2$ matrix, then $\varrho_V: H^\infty(\Omega) \to \mathbb{C}^{n \times n}$, where $n=n_1+n_2$, defined by 
\[\varrho_V(f):= \begin{pmatrix} f(w) I_{n_1} & Df(w) \cdot V \\ 0 & f(w) I_{n_2}\end{pmatrix},\,\,w\in \Omega,\,\, f\in H^\infty(\Omega), \]
where \[Df(w) \cdot V := \tfrac{\partial f}{\partial z_1}(w) V_1+ \cdots + \tfrac{\partial f}{\partial z_m}(w) V_m,\] 
is a homomorphism. The class of homomorphisms  
\[\{\varrho_V\mid V=(V_1, \ldots V_m), V_j\in \mathbb{C}^{n_1\times n_2}, 1\leq j \leq m\} \] was first introduced by Parrott in \cite{SKP}, and we call them \textit{Parrott} homomorphisms. 

Recall that the Carath\'{e}odory norm $C_{\Omega,w}(x)$ of a vector $x\in \mathbb{C}^m$ is defined to be   
\[\sup\{| Df(w)\cdot x| \mid f:\Omega \to \mathbb{D},\,\, f \text{ is holomorphic, }f(w)=0\}.\] 

We now assume that $\Omega$ is a ball with respect to some norm $\| \cdot \|$ and $w=0$. Then the set
\[\big \{\left(\partial_1 f(0), \partial_2 f(0), \ldots, \partial_m f(0)\right): f \in H^\infty(\Omega), \|f\|_\infty \leqslant 1, f(0)=0\big\}\]
coincides with the unit ball in the dual space $(\mathbb{C}^m, \|\cdot \|_\Omega)^*$,  where $\|\cdot\|_\Omega:= C_{\Omega,0}$. Consequently, 
\[\{x\in \mathbb{C}^m \mid C_{\Omega,0}(x) \leqslant 1\}=\Omega,\] see \cite[Lemma 2.2]{Misra19}. Let $E$ be the linear space $\mathbb{C}^m$ equipped with the dual of the norm $\|\cdot\|_\Omega$. Define 
\[L_V:(E, \|\cdot\|_\Omega^*) \to (\mathbb{C}^{n_1\times n_2}, \|\cdot \|_{\ell_2^{n_1} \to \ell_2^{n_2}})\] by setting 
\[L_V(x_1, \ldots , x_m)= x_1 V_1 + \cdots + x_m V_m,\,\, (x_1, \ldots , x_m)\in \mathbb{C}^m.\] 
Let  $E^{(k)}$ be the tensor product of $E$ with the space of $k\times k$ matrices  equipped with the usual operator norm. We choose 
the injective tensor product norm on $E^{(k)}$. Explicitly,  
if $X^{(k)}:=\big (\!\!\big ( x_{i,j} \big )\!\!\big ) \in  E^{(k)}$, then $X^{(k)}:(E, \|\cdot\|_\Omega) \to (\mathbb{C}^{k\times k}\|\cdot\|_{\ell_2^k \to \ell_2^k})$ and
\[ \|X^{(k)}\|=\sup \big \{\|\big (\!\!\big ( x_{i,j}(u)\big )\!\!\big )\|_{\ell^k_2 \to \ell^k_2}\mid u\in (E, \|\cdot\|_\Omega),\,\, \|u\|_E \leqslant 1\big \}.\] 
Similarly, if $f\in H^\infty(\Omega)^{(k)}:= H^\infty(\Omega)\otimes \mathbb{C}^{k\times k}$, then $\|f\|_{\Omega, \infty}^{(k)}:=\sup\{\|f(z)\|_{\ell_2^k \to \ell_2^k}\mid z\in \Omega\}$. Finally, note that there is a unique $C^*$- norm on $\mathbb{C}^{n \times n } \otimes \mathbb{C}^{k\times k}\cong \mathbb{C}^{n k \times n k}$. Now, consider the homomorphism 
\[\varrho_V^{(k)}:=\varrho_V\otimes I_k:  H^\infty(\Omega)^{(k)} \to \mathbb{C}^{n k \times n k}\] 
and the linear map 
\[L^{(k)}_V:= L_V\otimes I_k: E^{(k)} \to \mathbb{C}^{n_1 k \times n_2 k}.\] 
The homomorphism  $\varrho_V$ (resp., the linear map $L_V$) is said to be completely contractive if $\|\varrho^{(k)}_V\| \leqslant 1$ (resp., $\|L_V^{(k)}\| \leqslant 1$) for $k=1,2, \ldots $. It is shown in \cite[Lemma 3.3]{Misra94} (also, see \cite[Lemma 5.1]{vip}) that 
$\|\varrho_V^{(k)}\| \leqslant 1$ if and only if $\|L_V^{(k)}\| \leqslant 1$.  Therefore, $\varrho_V$ is completely contractive if and only if so is $L_V$. Thus, the question of determining when a contractive homomorphism of the form $\varrho_V$ is completely contractive is equivalent to answering the same question for the 
linear map $L_V$. We  say $\varrho_V$ is completely bounded if $\sup_{k\geq 1}\|\varrho_V^{(k)}\|<\infty$. In this case we denote $\|\varrho\|_{cb}=\sup_{k\geq 1}\|\varrho_V^{(k)}\|<\infty.$ Moreover, $\|\varrho\|_{cb}=\sup_{k\geq 1}\|L_V^{(k)}\|.$


 It is then natural to ask if contractive linear maps of the form $L_V$ are completely contractive, see \cite{vip} for more on this. This question is connected with the computation of the positive Grothendieck constant if we assume that all rows of $V_i$, $i=1, \ldots , m$, are zero except the first one. With a slight abuse of language, setting $n_1=1$ and $n_2=n$, we identify $V_i$ with $v_i \in \mathbb{C}^{1\times n}$.  For such a choice of the matrices $V_1, \ldots, V_m$, the homomorphism $\varrho_V(f)$ is of the form
\[\varrho_V(f)= \begin{pmatrix} f(w)  & Df(w) \cdot V \\ 0 & f(w)I_{n} \end{pmatrix}.\]
 Here, the rows of the matrix $V$ are the vectors $v_1, \ldots , v_m$ and $Df(w) \cdot V$ is the $1\times n$ matrix obtained from the left action of $V$ on the derivative $Df(w)$. 
 We define \textit{little Parrott homomorphisms} to be the class of all the homomorphisms induced by these $1\times n$ matrices.  With these standing assumptions, Theorem 1.9 of \cite{BM} shows that a Banach space $E$ has Property P if and only if every contractive linear map $L_V: (\mathbb{C}^m, \|\cdot\|_{E^*}) \to \ell_2^n$ induced by a little Parrott homomorphism $\varrho_V$ is completely contractive. The $m$- tuple $V$ defining the little Parrott homomorphism can be expressed in terms of the curvature of a commuting tuple of operators in the Cowen-Douglas class $B_1(\Omega)$. We refer to \cite{Misra901, Misra94, Misra19,  Misra902, vip, RKSBLMS} for more on this topic. The following lemma is a slight variation of  \cite[Theorem 1.9]{BM}. 

\begin{theorem}\label{thm5.17}
Let \( V:=(V_1,\dots, V_m) \) be as above such that $\varrho_V$ is a contractive little Parrott homomorphism. Then 
\( \|\varrho_V\|_{cb}\leq\sqrt{\gamma(\mathbb{C}^m,\|\cdot\|_{\Omega} )}.\) Furthermore, the upper bound is attained by some contractive little Parrott homomorphism.

\end{theorem}

\begin{proof}
Let us denote \[\mathcal{B}_k:=\{B=(B_1,\dots,B_m):\left\| z_1 B_1 + \cdots + z_m B_m \right\|_{\text{op}} \leq 1,\ \forall z \in \Omega,\ B_i\in M_k, 1\leq i\leq m\}.\]
Note that by the discussion preceding the theorem, we clearly have
\begin{equation}\label{eqn5.2}
\|\varrho_V^{(k)}\|^2=\sup\{\left\| B_1 \otimes v_1 + \cdots + B_m \otimes v_m \right\|_{\text{op}}^2:(B_1,\dots,B_m)\in\mathcal{B}_k\}.
\end{equation}
 However, we observe that 
 \begin{equation}\label{eqn5.3}\big \| B_1 \otimes v_1 + \cdots + B_m \otimes v_m \big \|_{\text{op}}^2=\sup \Big \{ \Big | \sum_{i,j} \langle B_jx, B_ix \rangle \langle v_j, v_i \rangle \Big |: x \in \ell_2^k, \|x\|_2 \leq 1 \Big \}. \end{equation} 
 To this end, notice that 
 \[\{(B_1x,\dots,B_mx):x\in \ell_2^k,\ \|x\|_2\leq 1\}=\Big \{(b_1,\dots,b_m):\sup_{z\in\Omega}\Big\|\sum_{i=1}^mz_ib_i\Big\|_2\leq 1,\ b_i\in \ell_2(k)\Big \}.\] Therefore, by equations \eqref{eqn5.2} and \eqref{eqn5.3} we obtain 
\[\|\varrho_V^{(k)}\|^2=\sup\Big \{\Big| \sum_{i,j} \langle b_j, b_i \rangle \langle v_j, v_i \rangle \Big |:\sup_{z\in\Omega}\Big\|\sum_{i=1}^mz_ib_i\Big\|_2\leq 1,\ b_i\in \ell_2^k\Big \}.\] Moreover, $\sup_{z\in\Omega}\Big\|\sum_{i=1}^mz_ib_i\Big\|_2\leq 1$ if and only if $(\langle b_j,b_i\rangle)_{i,j=1}^m:(\mathbb{C}^m,\|.\|_{\Omega})\to (\mathbb{C}^m,\|.\|_{\Omega})^*$ is contractive. Also contractivity of $\varrho_V$ is equivalent to $(\langle v_j,v_i\rangle)_{i,j=1}^m:(\mathbb{C}^m,\|.\|_{\Omega})^*\to (\mathbb{C}^m,\|.\|_{\Omega})$ is contractive. Therefore, we readily conclude that 
\( \|\varrho_V\|_{cb}\leq\sqrt{\gamma(\mathbb{C}^m,\|\cdot\|_{\Omega} )}.\)) The fact that equality is achieved for some $V$ easily follows from a compactness argument. This completes the proof.
\end{proof} 
\subsection{$\gamma(E)$ for complex Banach spaces} In this subsection, all the Banach spaces are assumed to be over the field of complex numbers. Let $K_G^{+}(\mathbb C)$ be the complex positive Grothendieck constant. Theorem \ref{thm5.17} relates the completely bounded norms of little Parrott homomorphisms to $\gamma(E)$. In this subsection, we  find asymptotic behavior of ( also see Proposition \ref{++++}),) $\gamma(E)$  for various classical finite-dimensional Banach spaces. 
The constant $\alpha$ introduced in \cite{vip} is related to the Parrott homomorphisms exactly in the same way that the constant $\gamma(E)$ is related to the little Parrott homomorphisms. 
However, unlike $\alpha,$ here we find an exact asymptotic for $\gamma(\ell_p^n)$. Moreover, as shown below, $\gamma(\ell^3_\infty) = \sup \{K_G^{+}(\ell_\infty^3,\ell_2^m): m\in \mathbb{N}\} = 1$ while the Parrott example shows that $\alpha > 1$ for $\ell_\infty^3$.  However, the well-known theorem due to Ando showing that a pair of commuting contraction admits a unitary dilation means that $\alpha=1$ for $\ell_\infty^2$ and consequently $\gamma$ of this space is also $1$. Therefore, to determine if the two constants $\alpha$ and $\gamma$ are distinct for a finite dimensional Banach space appears to be an interesting problem.
\begin{proposition} \label{prop5.12}
 For $1\leqslant p\leqslant 2$, we have $n^{\frac{2}{p^\prime}}\leqslant\gamma(\ell_p^n)\leqslant K_G^{+}(\mathbb C)n^{\frac{2}{p^\prime}}.$
\end{proposition}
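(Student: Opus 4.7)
The plan is to handle the two inequalities separately: the lower bound by exhibiting explicit test operators, and the upper bound by factoring $\ell_p^n$ through $\ell_1^n$ and invoking the classical positive Grothendieck inequality.

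For the lower bound, I would take $A = I_n$ viewed as a map $\ell_p^n \to \ell_{p'}^n$, and $B = n^{1/p' - 1/p} I_n$ viewed as a map $\ell_{p'}^n \to \ell_p^n$. Since $p \leqslant 2 \leqslant p'$, we have $\|x\|_{p'} \leqslant \|x\|_p$, so $\|A\|_{\ell_p^n \to \ell_{p'}^n} = 1$; while $\|I_n : \ell_{p'}^n \to \ell_p^n\| = n^{1/p - 1/p'}$ (the supremum is attained at $n^{-1/p'}(1,\ldots,1)$), so that $\|B\|_{\ell_{p'}^n \to \ell_p^n} = 1$. Both $A$ and $B$ are positive in the sense of the paper, being Gram matrices of the standard basis (equivalently, $I_n = \sum_i e_i \otimes e_i$). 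The pairing is $\langle A, B\rangle = \operatorname{tr}(AB) = n \cdot n^{1/p' - 1/p} = n^{2/p'}$, using $1 + 1/p' - 1/p = 2/p'$ (which follows from $1/p + 1/p' = 1$). This gives $\gamma(\ell_p^n) \geqslant n^{2/p'}$.

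For the upper bound, use Proposition \ref{++++} to write $\gamma(\ell_p^n) = \sup_m K_G^+(\ell_p^n, \ell_2^m)$. I would factor the identity on $\ell_p^n$ through $\ell_1^n$: taking $u : \ell_p^n \to \ell_1^n$ and $v : \ell_1^n \to \ell_p^n$ to both be the identity map, Hölder yields $\|u\| = n^{1/p'}$ (since $\|x\|_1 \leqslant n^{1/p'}\|x\|_p$) and $\|v\| = 1$ (since $\|x\|_p \leqslant \|x\|_1$), so $f(\ell_p^n, \ell_1^n) \leqslant n^{1/p'}$. Applying Lemma \ref{KG and Subspaces}(ii) with $X = \ell_p^n$, $E = \ell_1^n$, $F = \ell_2^m$ gives $K_G^+(\ell_p^n, \ell_2^m) \leqslant n^{2/p'} K_G^+(\ell_1^n, \ell_2^m)$. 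Taking the supremum over $m$ yields $\gamma(\ell_p^n) \leqslant n^{2/p'} \gamma(\ell_1^n)$.

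It remains to verify $\gamma(\ell_1^n) \leqslant K_G^+(\mathbb{C})$. Unwinding the definition, one must bound $\operatorname{tr}(AB)$ for positive $A : \ell_1^n \to \ell_\infty^n$ with $\|A\| \leqslant 1$ (equivalently $|a_{ij}| \leqslant 1$; combined with $A \geqslant 0$, this forces $A = (\langle u_i, u_j\rangle)$ with $\|u_i\| \leqslant 1$) and positive $B : \ell_\infty^n \to \ell_1^n$ with $\|B\|_{\ell_\infty^n \to \ell_1^n} \leqslant 1$. Then $\operatorname{tr}(AB) = \sum_{i,j} b_{ij}\langle u_i, u_j\rangle$, which is precisely the form controlled by the classical complex positive Grothendieck inequality defining $K_G^+(\mathbb{C})$. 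The entire proof is really an exercise in bookkeeping of norms and dualities; the only non-trivial input is the positive Grothendieck inequality for $\gamma(\ell_1^n)$, which is essentially the definition of $K_G^+(\mathbb{C})$ itself, and the mild obstacle is simply keeping careful track of the directions of the identity maps between $\ell_p^n$ and $\ell_{p'}^n$ and the corresponding operator norms.
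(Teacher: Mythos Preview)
Your proof is correct and follows essentially the same approach as the paper. For the lower bound, both you and the paper test against the identity operators (you normalize $B$ upfront, the paper divides by the norms at the end). For the upper bound, both factor through $\ell_1^n$ and use $\gamma(\ell_1^n) \leqslant K_G^+(\mathbb{C})$; the only cosmetic difference is that the paper states directly $\gamma(\ell_p^n) \leqslant d(\ell_p^n, \ell_1^n)^2 \gamma(\ell_1^n)$, whereas you route this through Proposition~\ref{++++} and Lemma~\ref{KG and Subspaces}(ii), which is precisely what justifies that inequality.
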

\begin{proof}
Since $p\leqslant p^\prime,$ we have that $\|x\|_{p^\prime}\geqslant\|x\|_p.$ Thus $\|\operatorname{id}\|_{\ell_p^n\to\ell_{p^\prime}^n}\leqslant 1.$ On the other hand we have \[\|\operatorname{id}\|_{\ell_{p^\prime}^n\to\ell_p^n}=n^{1-\frac{2}{p^\prime}}.\] Therefore, we have that \[\gamma(\ell_p^n)\geqslant \frac{\langle \text{id},\text{id}\rangle }{\|\text{id}\|_{\ell_{p^\prime}^n\to\ell_p^n}\|\operatorname{id}\|_{\ell_{p}^n\to\ell_{p^\prime}^n}}=n^{\frac{2}{p^\prime}}.\]  noting that $\gamma(\ell_p^n)\leqslant d(\ell_p^n,\ell_1^n)^2\gamma(\ell_1^n)\leqslant n^{\frac{2}{p^\prime}}K_G^{+}(\mathbb C)$ and using the known bounds of $d(\ell_1^n,\ell_p^n)$, see \cite[Proposition 37.6]{Tom89}. 
\end{proof}
\begin{remark} 
We recall from \cite{BM} that $\gamma(\ell_1^2) = 1$. Consequently, taking 
$1 \leqslant p \leqslant 2$ and $n=2$, we have $\gamma(\ell_p^2) = 2^{\tfrac{2}{p^\prime}},$ as $d(\ell_1^2,\ell_p^2)= 2^{\tfrac{2}{p^\prime}}$. From the definition of $\gamma$, it follows that 
$\gamma(E) = \gamma(E^*)$. As a result, for $2 \leqslant p \leqslant \infty$, we have $\gamma(\ell_p^2) = 2^{\tfrac{2}{p}}$.
\end{remark}
\begin{proposition}\label{prop5.13}
We have the following estimates for Schatten-$p$ classes.
\begin{itemize}
    \item[(i)] $n\leqslant \gamma(S_1^n).$
    \item[(ii)] Let $1<p<2.$ Then $n^{1+\frac{2}{p^\prime}}\leqslant \gamma(S_p^n).$
\end{itemize}
\end{proposition}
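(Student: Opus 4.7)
My plan is to repeat, in the Schatten setting, the lower-bound strategy that gave $\gamma(\ell_p^n)\geqslant n^{2/p'}$ in Proposition~\ref{prop5.12}. There, the supremum defining $\gamma(\ell_p^n)$ was tested on $A=B=\operatorname{id}$, which succeeds because the identity of $\ell_p^n\to\ell_{p'}^n$, written as the diagonal tensor $\sum_{i}e_i\otimes e_i$ in $\ell_{p'}^n\otimes \ell_{p'}^n$, sits in the non-negative cone generated by the symmetric tensors $y\otimes y$. The literal transfer of this choice fails for $S_p^n$: the identity of $M_n$, under the trace duality $(S_p^n)^{*}\cong S_{p'}^n$, corresponds to the tensor $\sum_{i,j}E_{ij}\otimes E_{ji}$, which is the ``swap'' and is \emph{not} a positive combination of symmetric tensors. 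The natural substitute is the \emph{transpose} tensor.

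Concretely, let $\{E_{ij}\}_{1\leqslant i,j\leqslant n}$ denote the matrix units and set
\[
T:=\sum_{i,j=1}^{n}E_{ij}\otimes E_{ij}.
\]
I would use $T$ simultaneously as an element $A$ of $S_{p'}^n\otimes S_{p'}^n$ and as an element $B$ of $S_p^n\otimes S_p^n$; in both interpretations $T$ is manifestly a sum of symmetric tensors $y\otimes y$, so $A\geqslant 0$ and $B\geqslant 0$. A direct check using $\operatorname{tr}(E_{ij}E_{kl})=\delta_{il}\delta_{jk}$ shows that both $A\colon S_p^n\to S_{p'}^n$ and $B\colon S_{p'}^n\to S_p^n$ realise the transpose map $x\mapsto x^{\mathrm{t}}$. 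Since Schatten norms depend only on singular values, the norm estimates reduce to standard comparisons of Schatten norms in $n$ dimensions: for $p\leqslant 2\leqslant p'$ one has $\|x\|_{p'}\leqslant\|x\|_p$, which gives $\|A\|_{S_p^n\to S_{p'}^n}\leqslant 1$, and $\|x\|_p\leqslant n^{1/p-1/p'}\|x\|_{p'}$, which gives $\|B\|_{S_{p'}^n\to S_p^n}\leqslant n^{1-2/p'}$. The same trace identity yields the pairing
\[
\langle A,B\rangle=\sum_{i,j,k,l=1}^{n}\operatorname{tr}(E_{ij}E_{kl})^{2}=\sum_{i,j,k,l}\delta_{il}\delta_{jk}=n^{2}.
\]

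Plugging these three estimates into the definition of $\gamma$ gives
\[
\gamma(S_p^n)\geqslant \frac{\langle A,B\rangle}{\|A\|\,\|B\|}\geqslant \frac{n^{2}}{n^{1-2/p'}}=n^{1+2/p'},
\]
which is part~(ii); part~(i) is the specialisation $p=1$, $p'=\infty$, in which the bound degenerates to $n$. I expect no real obstacle beyond bookkeeping: the sole conceptual point is the recognition that, in the matrix-unit basis, it is the transpose and not the identity that plays the role of ``the diagonal tensor on which $\gamma$ should be tested'', because the transpose is the one which admits a diagonal presentation and is thereby positive in the symmetric sense used in this paper. Everything else consists of the two elementary Schatten norm comparisons together with the orthonormality of $\{E_{ij}\}$ in the Hilbert–Schmidt trace pairing, and the restriction $p\leqslant 2$ is forced only by the step $\|A\|\leqslant 1$, which is precisely the range stipulated in the proposition.
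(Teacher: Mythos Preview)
Your proof is correct and follows essentially the same strategy as the paper's: bound $\gamma$ from below by testing the supremum on a single explicit pair $(A,B)$. The paper simply takes $A=B=\operatorname{id}$ and records $\|\operatorname{id}\|_{S_p^n\to S_{p'}^n}=1$, $\|\operatorname{id}\|_{S_{p'}^n\to S_p^n}=n^{1-2/p'}$, $\langle\operatorname{id},\operatorname{id}\rangle=n^2$, without pausing over positivity. You test on the transpose instead; since transposition is an isometry on every $S_q^n$ and squares to the identity, all three numerical ingredients are identical to the paper's, and the bound $n^{1+2/p'}$ (and $n$ for $p=1$) follows in the same way.

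The one point of divergence is your remark that the identity tensor (the swap $\sum_{i,j}E_{ij}\otimes E_{ji}$) is not a positive combination of symmetric tensors. Over $\mathbb{R}$ this is indeed true, because the associated bilinear form $(Y,Y)\mapsto\operatorname{tr}(Y^2)$ is indefinite. However, this subsection is set over $\mathbb{C}$, and there every symmetric tensor lies in the cone generated by $v\otimes v$'s, since any complex scalar $c$ satisfies $c\,(v\otimes v)=(\sqrt{c}\,v)\otimes(\sqrt{c}\,v)$; hence the swap is non-negative in the paper's sense as well. So the paper's choice of $\operatorname{id}$ is legitimate, though this is not made explicit. Your choice of the transpose has the virtue that positivity is visible by inspection from the decomposition $T=\sum_{i,j}E_{ij}\otimes E_{ij}$, which is a nice touch.
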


\begin{proof}
Note that $\operatorname{id}:S_\infty^n\to S_1^n$ has operator norm $n$ and $\operatorname{id}:S_1^n\to S_\infty^n$ has norm $1.$ Therefore, we have that
\[\gamma(S_1^n)\geqslant \frac{\langle \operatorname{id},\operatorname{id}\rangle }{\|\operatorname{id}\|_{S_{1}^n\to S_\infty^n}\|\operatorname{id}\|_{S_{\infty}^n\to S_{1}^n}}=n.\] Thus $\gamma(S_1^n)\geqslant {n}.$   Part (ii) follows from a similar calculation.
\end{proof}
\begin{remark}
Propositions  \ref{prop5.12} and \ref{prop5.13} show the difference between $\ell_p^n$ and $S_p^n$ through the distinct behavior of the constants $\gamma(\ell_p^n)$ and $\gamma(S_p^n)$.
\end{remark}
Now, we give elementary and short verification of the equalities  
\[\sup_{m\geqslant 1}K_G^{+}(\ell_\infty^2,\ell_2^m)=\sup_{m\geqslant 1}K_G^{+}(\ell_\infty^3,\ell_2^m)=1.\] 
These were proved earlier in \cite[Theorem 4.2]{AFHS95}, see Proposition \ref{++++} (and also  \cite[Fact 7]{BM} together with \cite[Fact 2]{BM}). 
 In \cite{BM08},  the Grothendieck constants in dimensions $2$ and $3$ were calculated using the known list of extreme points (of an appropriate unit ball) in the real case. The computation in the complex case used different techniques. Here, we use an upper bound on the rank of extreme points of correlation matrices given in \cite{LCB} in the complex case to show that the positive Grothendieck constant must be $1$ for $n=2$ or $3$. The proof is from \cite{RKS}. 

Let $E$ be a finite-dimensional Banach space, we say $E$ has $2$-summing property if for any Hilbert space $\mathcal{H}$ and $T\in B(E,\mathcal{H}),$ $\pi_2(T):=\|T\|$.  
Using Proposition \ref{++++}, we get that  \begin{equation}\label{kgpi}K_G^+(\ell_\infty^n,\ell_2^k)=\sup\{\pi_2(Tu)\}:\|u\|_{\ell_2^k\to\ell_\infty^n}\leqslant 1,\|T\|_{\ell_\infty^n\to{\ell_2^k}}\leqslant 1\}.\end{equation} 
We recall below a useful Lemma from \cite{Rietz}.

\begin{lemma}\label{noteasy} Suppose that $A=(a_{ij})_{i,j=1}^n\in M_n$  is a  non-negative matrix. Then we have 
\[\sup\Big\{\Big|\sum_{i,j=1}^na_{ij}\langle x_i,x_j\rangle\Big|:\|x_i\|_{\mathcal{H}}\leqslant 1\Big\}=\sup\Big\{\Big|\sum_{i,j=1}^na_{ij}\langle x_i,x_j\rangle\Big|:\|x_i\|_{\mathcal{H}}= 1\Big\},\label{(i)}\tag{$\dagger$}\]
for any Hilbert space $\mathcal H,$ and 
\[\|A\|_{\ell_\infty^n\to\ell_1^n}=\sup_{\begin{array}{c}|z_i|=1\\ 1\leqslant i\leqslant n\end{array}} 
\sum_{i,j=1}^n a_{ij}z_i\overline{z}_j.\label{(ii)}\tag{$\ddagger$}\] 
\end{lemma}
In \cite{AFHS95}, the authors constructed an operator $T:\ell_\infty^4\to \ell_2^2$ such that $\pi_2(T)>\|T\|.$ This, in turn, shows that $K^+_G(\ell_\infty^4)>1$.
However, they do not mention any explicit lower bound.  Here, following their methods, we find an explicit lower bound of $K_G^{+}(\ell_\infty^4,\ell_2^2).$ From this, we find an alternative proof of \cite[Theorem 2.1] {Davidchoi}  as well as explicit examples. 

Part (i) of the theorem below has been proved in \cite{AFHS95} and also in \cite{BM}. 
Recall that a complex positive semi-definite matrix with all its diagonal elements equal to $1$ is called a \textit{correlation matrix}. We denote the set of all $n\times n$ correlation matrices by $\mathcal{C}(n)$.

\begin{theorem}\label{plasth}
We have the following.
\begin{itemize}
    \item[(i)] For all $m\in\mathbb N,$ $K_G^{+}(\ell_\infty^2,\ell_2^m)=K_G^{+}(\ell_\infty^3,\ell_2^m)=1.$
    \item[(ii)] $K_G^{+}(\ell_\infty^4,\ell_2^2)\geqslant 1.1658.$
\end{itemize}
 \end{theorem}
\begin{proof}

Given a complex $n\times n$ non-negative matrix $A$, set  
\[\beta(A)=\sup\big\{ \langle A,B\rangle\mid B\geqslant 0, \|B\|_{\ell_1^n\to\ell_\infty^n}\leqslant 1 \big \}.\] 
Note that part (i) of Lemma \ref{noteasy} taken together with Fact \ref{fact6.7} shows that 
\[\beta(A)=\sup\{\langle A,B\rangle: B\in\mathcal{C}(n)\}.\] 
Since $\langle A,B\rangle$ is $\mathbb R$-linear in $B$ and $\mathcal{C}(n)$ is a compact convex set, therefore it follows that 
\[\beta(A)=\sup\{\langle A,B\rangle: B\in E(\mathcal{C}(n))\},\] 
where $E(\mathcal{C}(n))$ is the set of all extreme points of $\mathcal{C}(n)$. Since, all the elements of $E(\mathcal{C}(n))$ are of rank less than or equal to $\sqrt{n}$, see \cite{LCB}, in case $n$ is either $2$ or $3$, we conclude that extreme correlation matrices are of rank one. Now, if the correlation matrix $B=(\langle x_i,x_j\rangle)_{i,j=1}^n$ is of rank $1$, then $x_i$'s can be chosen to be one dimensional unit vectors. So for $n=2,3$, we obtain the following \[\beta(A)=\sup_{B\in E(\mathcal{C}(n))}\langle A,B\rangle=\sup_{\mid z_i\mid=1}\sum_{i,j=1}^{n}a_{ij}z_i\bar{z}_{j}=\|{A} \|_{\ell_\infty^n\to\ell_1^n}.\] The last equality follows from part (ii) of Lemma \ref{noteasy}
completing the proof of part (i) of the theorem.

Let \( L_p^n(\mu) \) denote the linear space of functions \( f: \{1, \dots, n\} \to \mathbb{C} \), equipped with the usual \( L_p \)-norm, where \( \mu \) is the uniform probability measure on \( \{1, \dots, n\} \).
Let 
\[x_1=\Big(1,0,\frac{1}{\sqrt{2}},\frac{1}{\sqrt{2}}\Big) \text{~and~} x_2=\Big(1,0,\frac{i}{\sqrt{2}},\frac{1}{\sqrt{2}}\Big)\in L_\infty^4(\mu)\] and consider $X=\text{span}\{x_1,x_2\}.$ Also, we let  $X_\infty$ and $X_2$ to be the linear space $X$ considered as subspaces of $L_\infty^4(\mu)$ and $L_2^4(\mu)$, respectively. 
Let $I_{\infty,2}^X$ denote the restriction of the identity map $I_{\infty,2}:X_\infty\to X_2.$ By \cite[Example 2.3]{AFHS95}, we must have that $\|I_{\infty,2}^X\|<1$ but $\pi_2(I_{\infty,2}^X)=1.$ Moreover, if we define $u:=PI_{\infty,2}:L_\infty^4(\mu)\to X_2$ as $P:L_2^4(\mu)\to X_2$ is an orthogonal projection, by \cite{AFHS95} again, $\|u\|<1$ and $\pi_2(u)=1.$ We'll explicitly compute $\|u\|.$ To this end, note that $${X_2}^{\perp}=\text{span}\left\{(0,1,0,0),\big(-\frac{1}{\sqrt{2}},0,0,1\big)\right\}.$$ 
Let $\alpha,\beta,\delta,\gamma\in\mathbb{C}.$ Since $P$ is an orthogonal projection onto $X_2,$ therefore, 
we get, 
$$P\left(\Big(\alpha+\beta-\frac{\delta}{\sqrt{2}},\gamma,\frac{\alpha+i\beta}{\sqrt{2}},\frac{\alpha+\beta}{\sqrt{2}}+\delta\Big)\right)=\Big(\alpha+\beta,0,\frac{\alpha+i\beta}{\sqrt{2}},\frac{\alpha+\beta}{\sqrt{2}}\Big)=(t_1,t_2,t_3,t_4) \quad (say).$$ 
 In these notations, we obtain $$\delta=\frac{2}{3}\Big(t_4-\frac{t_1}{\sqrt{2}}\Big), \quad \alpha+\beta=t_1+\frac{\delta}{\sqrt{2}}=t_1+\frac{\sqrt{2}}{3}\Big(t_4-\frac{t_1}{\sqrt{2}}\Big).$$  
 Putting above computations together, we get 
\[P(t_1,t_2,t_3,t_4)=\Big(\frac{2}{3}t_1+\frac{\sqrt{2}}{3}t_4,0,t_3,\frac{\sqrt{2}}{3}t_1+\frac{1}{3}t_4\Big).\] This, in turn gives, 
\[u(t_1,t_2,t_3,t_4)=\Big(\frac{2}{3}t_1+\frac{\sqrt{2}}{3}t_4,0,t_3,\frac{\sqrt{2}}{3}t_1+\frac{1}{3}t_4\Big).\] Hence, we get, 
\[\|u\|^2=\sup\left\{\frac{1}{4}\Big|\frac{2}{3}t_1+\frac{\sqrt{2}}{3}t_4\Big|^2+\frac{1}{4}|t_3|^2+\frac{1}{4}\Big|\frac{\sqrt{2}}{3}t_1+\frac{1}{3}t_4\Big|^2:|t_i|\leqslant 1,\ 1\leqslant i\leqslant 4\right\}.\] 
Clearly  the supremum above  is achieved at $t_1=t_2=t_3=t_4=1$. Thus we obtain $\|u\|= \sqrt{\frac{3+\sqrt{2}}{6}}\approx\sqrt{.7357}.$ Hence $K_G^{+}(\ell_\infty^4,\ell_2^2)\geqslant \frac{\pi_2(u)}{\|u\|}\approx 1.1658.$
\end{proof}

\begin{corollary}[Theorem 2.1, \cite{Davidchoi}]\label{choicor} 
There exists a quadruple of $3\times 3$ commuting tuple of matrices which are contractions but they do not coextend to commuting isometries.
\end{corollary}
\begin{proof}
    Note that in view of Theorem \ref{thm5.17} and Part (ii) of Theorem \ref{plasth}, there exists a contractive commuting tuple of the form 
    \[\begin{pmatrix}
 0 & a_1 & a_2 \\
 0 & 0 & 0 \\
 0 & 0 & 0
\end{pmatrix}, \begin{pmatrix}
 0 & b_1 & b_2 \\
 0 & 0 & 0 \\
 0 & 0 & 0
\end{pmatrix}, \begin{pmatrix}
 0 & c_1 & c_2 \\
 0 & 0 & 0 \\
 0 & 0 & 0
\end{pmatrix} \text{and}\ \begin{pmatrix}
 0 & d_1 & d_2 \\
 0 & 0 & 0 \\
 0 & 0 & 0
\end{pmatrix}\] which does not satisfy matrix-valued von Neumann inequality, or equivalently, do not extend to commuting isometries but satisfies the von Neumann inequality. 
Thus, the proof is complete. 
\end{proof}
\begin{remark}
We have the following remarks.
\begin{itemize}
    \item[(i)] In the above corollary, one can explicitly find the vectors $(a_1,a_2),$ $(b_1,b_2),$ $(c_1,c_2)$ and $(d_1,d_2).$   
    \item[(ii)] Since $\sup_{n\geq 1}\gamma(\ell_\infty^n)<\infty$, it follows that the completely bounded norms of any contractive little Parrott homomorphism defined on $\ell_\infty^n, n\in \mathbb{N}$, is bounded by an universal constant independent of $n$. In view of \ref{xample}, Theorem \ref{thm5.17} and Proposition \ref{++++} we can find sequence of domains $(\Omega_n)_{n\geq 1}\subseteq\mathbb{C}^n$ which are unit balls with respect to some norms such that the natural inclusion $(\mathbb{C}^n,\|.\|_{\Omega_n})\hookrightarrow (\mathbb{C}^{n+1},\|.\|_{\Omega_{n+1}})$ is an isometry. Moreover, for each \(n\), the completely bounded norm of any contractive little Parrott homomorphism defined on \(\Omega_n\) remains bounded by a universal constant. (See also Remark~\ref{nxampl}.)
\end{itemize}
   
\end{remark}

\subsection*{Acknowledgment} We are very grateful to G. Pisier for generously sharing his ideas on the topic of this paper. We also thank him for pointing out an anomaly in an earlier draft of the paper. The authors thank Md. Ramiz Reza for some discussion on Corollary \ref{choicor}. 

\end{document}